\newenvironment {listeroman}
{\begin{enumerate}}
{\end{enumerate}}
\newtheorem{theorem}{Theorem}[section]
\newtheorem{lemma}[theorem]{Lemma}
\newtheorem{claim}[theorem]{Claim}
\newtheorem{corollary}[theorem]{Corollary}
\newtheorem{proposition}[theorem]{Proposition}
\newtheorem{definition}[theorem]{Definition}
\newtheorem{ssection}[theorem]{}
\DeclareMathOperator\Gr{\rm Gr}
\DeclareMathOperator\diam{\rm diam}
\numberwithin{equation}{section}
\let\nd=\noindent
\renewcommand\i{\ifmmode\mathchar"231 \else\char"10\fi}
\newcommand\Q{{\mathbb Q}}
\newcommand\R{{\mathbb R}}
\newcommand \PP{{\mathbb P}}
\newcommand \I{{\mathbb I}}
\newcommand \bG{{\mathbf G}}
\newcommand \FF{{\mathcal F}}
\newcommand\KK{\mathcal K}
\newcommand \A {{\mathcal  A}}
\newcommand\II{{\mathcal I}}
\newcommand\impl{ \ \Longrightarrow \ }
\newcommand\ana{\boldsymbol{\Sigma}^1_1}
\newcommand\ca{\boldsymbol{\Pi}^1_1}
\newcommand\pca{\boldsymbol{\Sigma}^1_2}
\newcommand\borm[1]{\boldsymbol{\Pi}^0_{#1}}
\newcommand \bora[1]{\boldsymbol{\Sigma}^0_{#1}}
\newcommand \bord[1]{\boldsymbol{\Delta}^0_{#1}}
\newcommand\borel{\boldsymbol{\Delta}^1_1}
\newcommand \Gd{\boldsymbol{G}_\delta}
\newcommand \Fs{\boldsymbol{F}_\sigma}
\newcommand \abs[1]{\left\vert#1\right\vert}
\newcommand \adh[1]{\overline{#1}}
\newcommand\cat{{}^\frown}      
\newcommand\td[1]{\tilde #1}
\let\a=\alpha
\let\b=\beta
\let\wo=\omega
\let\s=\sigma
\let\d=\delta
\let\g=\gamma
\let\e=\varepsilon
\let\sk=\smallskip
\let\mns =\setminus
\let\ph=\varphi
\let\G=\Gamma
\newcommand\red[1]{{\color{red}{#1}}}
\let\red=\relax
\begin{document}

\title[Closed zero-dimensional subspaces]{The descriptive complexity of the set\\ of all closed zero-dimensional 
subsets \\ of a Polish space}
\author{Gabriel Debs and Jean Saint Raymond}   
\begin{abstract} 
Given a   space $X$ we investigate the descriptive complexity  class $\G_X$  of the set  $\FF_0(X)$ of all  
 its closed zero-dimensional  subsets, viewed as a subset of the hyperspace $\FF(X)$ of all closed subsets of $X$. We prove that
  $\max \{ \G_X;  \ X \text{ analytic } \}=\pca $ and
  $\sup \{ \G_X; \ X \text{ Borel } \borm \xi\}   \supseteq \Game \bora \xi$  for any countable ordinal $\xi\geq1$. 
In particular we prove that there exists a one-dimensional Polish subpace of $2^\wo\times \R^2$   for which  $\FF_0(X)$ is not in the smallest  non trivial pointclass closed under complementation and the Souslin operation $\mathcal A\,$. \end{abstract}

 \subjclass[2010]{Primary:  03E15, 54H05; Secondary: 54F45}
\keywords{Effros Borel structure, zero-dimensional sets, $\sigma$-ideals, $\Game \G$ classes} 


\maketitle

\begin{section}{Introduction}

   If $X$ is a compact metrizable space and if we endow the hyperspace space $\FF(X)$ of all  closed subsets of $X$ with its canonical (compact metrizable) topology then (see Proposition  \ref{F0compact}) the  set $\FF_0(X)$ of all zero-dimensional  closed subspaces of $X$ is   a $\borm 2=\Gd$ subset of $\FF(X)$. 
When $X$ is   an arbitrary separable metrizable space there is no canonical topology on $\FF(X)$.
Nevertheless in this general context   $\FF(X)$   is still naturally endowed with a  canonical Borel structure (the Effros Borel structure) which   
 is isomorphic to the  Borel structure of an analytic (Polish) space if $X$ itself is analytic (Polish). 
 As we shall see  for any analytic space $X$  the set   $\FF_0(X)$ is $\pca$ (Proposition  \ref{F0=S12}) and   this bound is optimal (Theorem  \ref{F0=S12-complete}),   which answers Question 5.2 of  \cite{pol}.

  \begin{theorem}\label{pcacomplete} 
  There exists an  analytic space $X_\infty\subset 2^\wo\times \I$ for which the set   $\FF_0(X_\infty)$ is $\pca$-complete.
  \end{theorem}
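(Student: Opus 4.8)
The class membership is already settled: by Proposition \ref{F0=S12} we have $\FF_0(X)\in\pca$ for every analytic $X$, so it suffices to exhibit a single analytic $X_\infty\subseteq\Ct\times\I$ together with a continuous (Effros Borel measurable) reduction of a fixed $\pca$-complete set $B$ into $\FF_0(X_\infty)$. The plan is to write $B$ in the normal form $\alpha\in B\iff\exists\beta\ C(\alpha,\beta)$ with $C$ a universal $\ca$ relation, coded continuously by well-foundedness of a tree $S(\alpha,\beta)$ on $\omega$, and to build a map $\alpha\mapsto F_\alpha\in\FF(X_\infty)$ such that $F_\alpha$ is zero-dimensional exactly when some $\beta$ witnesses $\alpha\in B$.

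The geometric idea rests on two facts about $\Ct\times\I$. First, since $\Ct$ is totally disconnected, every connected subset of $\Ct\times\I$ lies in a single fibre $\{x\}\times\I$ and is therefore an interval; hence a \emph{closed} (thus compact) $F\subseteq\Ct\times\I$ is zero-dimensional iff no fibre $F_x$ contains a nondegenerate interval, which is why $\FF_0$ is only $\borm2$ in the compact case. Second --- and this is the whole point of passing to a non-Polish $X_\infty$ --- when $F$ is merely closed \emph{in} an analytic $X_\infty=\pi[R]$ with $R\subseteq(\Ct\times\I)\times\Br$ closed (so that $F=\tilde F\cap X_\infty$ for an ambient closed $\tilde F$), the fibres of $F$ become \emph{analytic} subsets of $\I$ and the dimension of $F$ is no longer detectable fibrewise: a sequence of thin (zero-dimensional) fibres may accumulate onto a whole interval whose points are thrown out of $X_\infty$, so $F$ can be totally disconnected yet fail to be zero-dimensional. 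Zero-dimensionality of such an $F$ is governed by the existence of a countable \emph{clopen} base, i.e. by an existential real quantifier over separating families, which is the source of the extra $\exists$ that lifts the complexity from $\ca$ to $\pca$.

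Accordingly I would design $X_\infty$ and $F_\alpha$ so that the only available candidate clopen separations of $F_\alpha$ are the traces $F_\alpha\cap(U\times\I)$ of clopen boxes, organized into a $\beta$-indexed family, and so that the $\beta$-th family is a genuine clopen base iff $C(\alpha,\beta)$ holds: a finite branch of $S(\alpha,\beta)$ should leave the corresponding boundary limit points \emph{outside} $X_\infty$, making the boxes clopen in $F_\alpha$ and cutting it into arbitrarily small pieces, while an infinite branch should force a surviving interval inside $X_\infty$ that no box can split. Thus ``$\exists\beta\ S(\alpha,\beta)$ well-founded'' translates into ``$\exists\beta$ yielding a clopen base,'' i.e. into zero-dimensionality of $F_\alpha$. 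The base gadget attaching, to each node of the tree, a small bridge in the $\I$-direction whose survival in $X_\infty$ is controlled by the projection $R\to\Ct\times\I$ is a dimension-raising construction of the type used to make connectedness properties $\ana$-hard; the novelty is to route the raising through the analytic membership relation so that the existential quantifier appears.

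The main obstacle is the hard (negative) direction: showing that when $\alpha\notin B$, so that \emph{no} $\beta$ is a witness, $F_\alpha$ genuinely has positive dimension. Here one cannot simply point to a single bad fibre; one must rule out \emph{every} conceivable clopen base of $F_\alpha$ at once, and the totally-disconnected-versus-zero-dimensional gap must be exploited so that the simultaneous failure of all branch conditions really leaves an unsplittable connected (or Erd\H{o}s-type one-dimensional) remnant inside $X_\infty$. Making this robust is what forces $X_\infty$ to encode a \emph{universal} $\ca$ relation, and is where the completeness of $B$ is actually used. The remaining points --- verifying that $\alpha\mapsto F_\alpha$ is Effros Borel measurable (indeed continuous) and that the $X_\infty$ so constructed is analytic and embeds in $\Ct\times\I$ --- are routine given a uniform description of the bridges, and I would check them last.
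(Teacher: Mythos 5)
Your upper bound (Proposition \ref{F0=S12}) and the overall strategy --- reduce a fixed $\pca$-complete set into $\FF_0(X_\infty)$ --- agree with the paper, but the reduction itself is only a plan, and its central mechanism is both unexecuted and misdirected. You locate the extra existential quantifier in ``the existence of a countable clopen base'' of a closed set that is totally disconnected yet possibly one-dimensional, and you concede that the negative direction (ruling out \emph{every} clopen base when no $\b$ is a witness) is an obstacle you cannot yet overcome. That is a genuine gap: nothing in the proposal produces the ``unsplittable Erd\H os-type remnant,'' and in fact no such delicate phenomenon is needed. The paper's construction keeps everything fibrewise and elementary. Writing the target set as $A=\{\a:\exists\b\ (\a,\b)\in B\}$ with $B$ co-analytic, one places a copy $G_n\approx\wo^\wo$ inside each rational subinterval $I_n$ of $\I$, with homeomorphisms $h_n:G_n\to\wo^\wo$, and sets $X=\{(\a,z):\forall n\ (z\in G_n\Rightarrow(\a,h_n(z))\notin B)\}$, an analytic subset of $2^\wo\times\I$. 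A witness $\b$ punctures the fibre $X(\a)$ at the dense set $\{h_n^{-1}(\b):n\in\wo\}$, so $X(\a)$ has empty interior in $\I$ and is therefore zero-dimensional; if there is no witness, then $X(\a)=\I$ outright and the associated closed set contains a genuine interval. The existential quantifier is over the witness $\b$, absorbed into the countably many copies $G_n$, not over separating families, and the hard direction you worry about is immediate.

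The second problem is that you dismiss as ``routine'' the one step the paper actually has to work for: converting $\a\mapsto X(\a)$ into a continuous (or even Borel) map into $\FF(X)$. The naive assignment $\a\mapsto\adh{\{\a\}\times X(\a)}$ need not be Borel for an admissible topology; Theorem \ref{hatE} repairs it by enlarging each section $\td E(\a)$ by a carefully chosen countable set so that the resulting $\hat E$ is closed in $X\times E$ and $\Phi_{\hat E}$ is of first Baire class (continuous when the parameter space is zero-dimensional), while Theorem \ref{dim} guarantees that the countable enlargement does not change the dimension of the section. Corollary \ref{E0} is precisely the reduction of $\{\a:\dim X(\a)=0\}$ to $\FF_0(X)$ that your plan presupposes; without it, or a substitute, the proposal does not close even after the geometric gadget is fixed.
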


 Here and in the sequel we denote by $\I$ the  interval $[0,1]$, and  by $\PP$ and $\Q$ the sets of   irrational and rational numbers respectively.  The definition of the notions of $\G$-complete and $\G$-hard set are recalled in Section \ref{DST}. We do  not know whether one can impose on the  space $X_\infty$ to be    Polish or even Borel.  However we shall prove  the following lower complexity bound:

\begin{theorem}\label{main} For any   countable  ordinal $\xi\geq 3$ 
there exists a   $\borm\xi$   set  $X_\xi\subset 2^\wo\times \I$ for which the set  $\FF_0(X_\xi)$ is   $\Game \bora \xi$-hard.
 \end{theorem}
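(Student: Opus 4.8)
The plan is to prove hardness by exhibiting, for a fixed $\Game\bora\xi$-complete set $\mathcal W_\xi$, a continuous map $x\mapsto F_x\in\FF(X_\xi)$ (for a Polish topology inducing the Effros Borel structure) such that $F_x\in\FF_0(X_\xi)\iff x\in\mathcal W_\xi$. For the source I would take the canonical game-quantifier presentation of $\Game\bora\xi$: fix a set $U$ that is universal for $\bora\xi$ and let $\mathcal W_\xi$ be the set of parameters $x$ for which player~I has a winning strategy in the length-$\wo$ game $G_x$ on $\wo$ whose payoff is the $\bora\xi$ section $U_x$. Since the game quantifier applied to a universal set is again universal, $\mathcal W_\xi$ is $\Game\bora\xi$-complete, and it suffices to reduce it to $\FF_0(X_\xi)$.

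The geometric core is a dimension gadget inside $2^\wo\times\I$. I would use the elementary fact that, $2^\wo$ being totally disconnected, every connected subset of $2^\wo\times\I$ lies in a single fibre $\{z\}\times\I$; hence a subset is hereditarily disconnected iff each fibre is an interval-free (equivalently zero-dimensional) subset of $\I$. The decisive point, however, is the classical gap between hereditary disconnectedness and zero-dimensionality witnessed by complete Erd\H{o}s space: a closed subset of $2^\wo\times\I$ all of whose fibres are zero-dimensional can itself be one-dimensional. I would exploit this gap to build, uniformly in a finite position $p$ of the game, a closed ``brush'' $B_p\subseteq 2^\wo\times\I$ whose fibres are always interval-free but which is zero-dimensional exactly when the subgame of $G_x$ below $p$ is won by player~I. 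The tree of positions is embedded through the $2^\wo$ coordinate (distinct positions mapping to a convergent, pruned family of points), while the $\I$ coordinate carries the Erd\H{o}s-type threads that coalesce into genuine one-dimensionality precisely along the plays defeating I's strategies. Taking $X_\xi$ to be the saturated union of these brushes and $F_x$ the brush attached to the empty position yields the equivalence ``$F_x$ zero-dimensional $\iff$ I wins $G_x$''.

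To pass from the open/closed payoff underlying the $\ca$ prototype of Theorem~\ref{F0=S12-complete} to an arbitrary $\bora\xi$ payoff, I would induct on $\xi$, using the countable sum theorem for dimension as the engine converting the Boolean operations of the Borel hierarchy into dimension conditions: a closed set presented as a well-separated countable union of closed zero-dimensional pieces is again zero-dimensional, whereas a single bad limit thread raises the dimension. Writing the universal $\bora\xi$ payoff as a countable union of $\borm{\xi'}$ sets ($\xi'<\xi$) and attaching to each summand a sub-brush furnished by the induction hypothesis lets the dimension of the assembled gadget record membership in the payoff, with the alternation of quantifiers of $G_x$ mirrored by alternating the ``union'' (player~I) and ``limit'' (player~II) assemblies level by level. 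The restriction $\xi\geq 3$ reflects that this assembly consumes the first Borel levels: the base gadget already sits at $\Gd=\borm 2$ by Proposition~\ref{F0compact}, so the clean uniform construction only starts at $\xi=3$.

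I expect the main obstacle to be the gadget itself: producing one family $B_p$ that is simultaneously \emph{sharp}, so that its dimension flips from $0$ to $1$ exactly at the winning/losing threshold with no intermediate value; \emph{uniform and Effros-continuous} in $x$ and $p$; and \emph{compatible with the inductive gluing} without the fibres ever acquiring an interval. Controlling the complete-Erd\H{o}s-space threads so that the existence of a \emph{clopen basis}, not merely total disconnectedness, tracks the game value is the crux. By comparison, verifying that the resulting $X_\xi$ is genuinely $\borm\xi$ and that $x\mapsto F_x$ is Effros-continuous should be routine once the gadget is in hand.
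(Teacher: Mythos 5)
Your opening move (a universal $\bora\xi$ set $U$ plus the game quantifier, so that $\Game U$ is $\Game\bora\xi$-complete) is exactly the paper's Lemma~\ref{Univ}. But the geometric core of your plan rests on a false premise. You assert, as the ``decisive point'', that a \emph{closed} subset of $2^\wo\times\I$ all of whose fibres are zero-dimensional can itself be one-dimensional, and you propose to build closed ``brushes'' $B_p\subset 2^\wo\times\I$ exploiting this. This is impossible: since $2^\wo$ is zero-dimensional and $\I$ is compact, the projection of a closed subset of $2^\wo\times\I$ onto $2^\wo$ is a perfect map with zero-dimensional fibres, and by the Hurewicz dimension-lowering theorem such a set is zero-dimensional --- this is precisely Proposition~\ref{prop} of the paper, which is used there to get an \emph{upper} bound ($\ca$) on the complexity of $\FF_0$ for such products. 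The Erd\H{o}s-space gap between total disconnectedness and zero-dimensionality needs more room than a compact fibre over a zero-dimensional base; it cannot be realized by closed subsets of $2^\wo\times\I$, so your dimension gadget cannot ``flip'' as described. The subsequent induction on $\xi$, which is supposed to convert Borel set operations into alternating union/limit assemblies of brushes, is not carried out and has no evident substitute for this broken mechanism.

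What the paper actually does is simpler and avoids any induction on $\xi$. Player~II's strategies are made into a coordinate $\tau$ of a zero-dimensional space $\Sigma_{\rm II}$, Player~I's strategies are coded as points of copies $G_n\approx\wo^\wo$ placed densely in $\I$, and one sets $X_U=\{(\a,\tau,z): \forall n\,(z\in G_n\rightarrow (\a,h_n(z)\star\tau)\notin U)\}$. If I wins $\bG_{U(\a)}$ with strategy $\s^*$, the points $h_n^{-1}(\s^*)$ form a dense set $D\subset\I$ avoided by the section, so $X_U(\a)\subset\Sigma_{\rm II}\times(\I\setminus D)$ is zero-dimensional; if II wins with $\tau^*$, the section contains $\{\tau^*\}\times\I$ and is one-dimensional --- no delicate almost-zero-dimensional threads, just an honest interval in the bad case. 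The restriction $\xi\geq3$ is not, as you suggest, because ``the base gadget sits at $\Gd$'': it is because the disjunct ``$z\notin G_n$'' is only $\bora 2$ (the $G_n$ are $\Gd$ but not closed in $\I$), which is absorbed by $\borm\xi$ only when $\xi\geq3$; the case $\xi=2$ is what forces the hypergraph machinery later. Finally, the passage from ``$\{\a:\dim X_U(\a)=0\}$'' to a continuous reduction into $\FF_0(X_U)$ is not routine: the naive map $\a\mapsto\{\a\}\times X_U(\a)$ is not Effros-continuous, and the paper needs Theorem~\ref{hatE}/Corollary~\ref{E0} (adding a controlled countable set to each fibre) to repair this. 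Your proposal defers exactly the two points where the real work lies.
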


For the definition of the  classes $\Game \G$ we refer the reader to  Section \ref{maxborel}. 
 {We  point out  that in \cite{pol} E. Pol and R. Pol   showed that   $\FF_0(\I\times\PP)$   is actually a $\ca\setminus \ana$ set  (note that $\ca=\Game\bora 1$ and $\ana=\Game\borm 1$).} We do know not whether the previous result holds for  $\xi=2$ but in this case we have the following:

\begin{theorem}\label{main2}     There exists a   Polish   space $Y \subset 2^\wo\times \I^2$ for which the set  $\FF_0(Y)$ is   $\Game \bora 2$-hard.
 \end{theorem}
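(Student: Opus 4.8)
The plan is to exhibit a continuous reduction of a fixed $\Game\bora 2$-complete set to $\FF_0(Y)$, so that the hardness is inherited. For the complete set I would take the one naturally attached to the game quantifier: a subset $H\subseteq 2^\wo$ of the form $\{w:\text{I wins }G_w\}$, where $G_w$ is an integer game whose payoff set for player I is $\bora 2$ (that is, $F_\sigma$) and depends continuously on $w$; such an $H$ is $\Game\bora 2$-complete by the general theory of the game quantifier, using $\bora 2$-determinacy. Equivalently one can reuse the combinatorial device already built for Theorem~\ref{main}, specialised to $\xi=2$, which presents the target as a win in an $F_\sigma$ separation game. I would then construct, simultaneously, a Polish space $Y\subseteq 2^\wo\times\I^2$ and a map $w\mapsto K_w$, continuous from $2^\wo$ into the hyperspace $\FF(Y)$, for which $K_w\in\FF_0(Y)$ holds exactly when player I wins $G_w$.

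The geometry is organised along the game tree. I would let the $2^\wo$-coordinate code the finite history of a play, so that the branches of the game tree embedded in $2^\wo$ are the infinite plays. Over each finite position I would glue a small gadget living in a box of $\I^2$: a single arc joining two prescribed terminals, the second $\I$-factor being reserved to \emph{reopen} that arc according to the move played next. The space $Y$ is then obtained from the closure of the union of these gadgets by deleting a carefully chosen $F_\sigma$-set of limit terminals, so that $Y$ is a $G_\delta$, hence Polish; the deleted points are placed exactly at the limits of the infinite plays, so that a chain of gadgets closes up into a genuine connected piece of $K_w$ only along a branch whose play realises the $F_\sigma$ payoff.

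The equivalence rests on the separator characterisation of dimension together with Hurewicz's dimension-lowering theorem. The projection $2^\wo\times\I^2\to 2^\wo$ is perfect, so on any \emph{closed} subset of $2^\wo\times\I^2$ the fibrewise bound $\dim K_w\le\sup_{z}\dim (K_w)_z$ holds; were $Y$ closed this would force $\dim K_w=0$ outright. The whole point is that $Y$ is only $G_\delta$, so closed subsets $K_w$ of $Y$ need not be closed in the ambient compact space, and a connection can be manufactured through the deleted boundary. Concretely, if I wins $G_w$ I would use the strategy to produce, for every pair of disjoint relatively closed sets in $K_w$, a clopen separator — the strategy tells I which gadget-arcs to cut before they can close up — whence $\dim K_w=0$ and $K_w\in\FF_0(Y)$. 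If instead II wins, I would use II's strategy to name two disjoint closed subsets of $K_w$ admitting \emph{no} clopen separation: along the play dictated by II the gadget-arcs close up through retained boundary terminals into an inseparable pair, so $K_w$ carries an essential map onto $\I$ and $\dim K_w\ge 1$, i.e.\ $K_w\notin\FF_0(Y)$. The $F_\sigma$ (rather than merely closed) nature of the payoff is precisely what encodes that a connection is a countable union of closed threats which II must realise in the limit.

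Two points deserve emphasis. First, the second $\I$-factor is what makes $\xi=2$ work while keeping $Y$ Polish: one coordinate carries the separating interface of each gadget and the other encodes the move that may dissolve it, which is exactly the freedom needed to delete only an $F_\sigma$-much of boundary; in Theorem~\ref{main} a single real coordinate suffices for $\xi\ge 3$, but only at the cost of letting $X_\xi$ be a genuine $\borm\xi$ non-Polish set. Second, I expect the main obstacle to be the hard direction: turning II's winning strategy into a \emph{provably} clopen-inseparable pair, i.e.\ certifying $\dim K_w\ge 1$ against the Hurewicz bound. This forces one to control very precisely which boundary terminals are retained in $Y$ and to match the $F_\sigma$ level structure of the payoff to the way countably many gadget-arcs accumulate, while at the same time keeping $w\mapsto K_w$ continuous and $Y$ a $G_\delta$; balancing these three requirements is where essentially all of the work lies.
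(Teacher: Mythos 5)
Your overall strategy --- reduce a $\Game\bora 2$-complete set to $\FF_0(Y)$ via a continuous map $w\mapsto K_w$ into the hyperspace, arranging that $\dim K_w$ detects which player wins --- has the right shape, and your diagnosis of why $Y$ must fail to be closed in $2^\wo\times\I^2$ (otherwise the fibrewise bound over the perfect projection would force $\dim K_w=0$) is sound. But the proposal has a genuine gap: the objects that would carry the proof are never constructed. The ``gadgets'', the arcs, the choice of which limit terminals to delete, and above all the verification of the two implications (I wins $\Rightarrow$ every pair of disjoint relatively closed subsets of $K_w$ is clopen-separated; II wins $\Rightarrow$ some pair is inseparable) are left as declared intentions. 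In particular ``the strategy tells I which gadget-arcs to cut'' is not an argument: a strategy for I is a function of II's moves, whereas a clopen partition of $K_w$ must be produced uniformly for all closed pairs and all plays at once, and nothing in the proposal explains how one strategy yields this. You concede that ``essentially all of the work lies'' in balancing inseparability, continuity of $w\mapsto K_w$, and $Y$ being $\Gd$; that work is the theorem. Note also that the continuity of $w\mapsto K_w$ is itself a nontrivial issue --- the section map of a $\Gd$ set need not even be Borel into the hyperspace --- and the paper requires a separate device (Theorem \ref{hatE} and Corollary \ref{E0}, which enlarge each section by a countable set to make the hyperspace-valued map continuous over a zero-dimensional parameter space); your proposal does not address this point at all.

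For comparison, the paper's route avoids the ``one gadget per finite position'' geometry entirely. It fixes once and for all a $\Gd$ \emph{hypergraph} $H\subset\I^2$: a one-dimensional connected Polish set squeezed between the graph $G$ of a strongly oscillating function on a co-countable set and its closure. Two properties do all the work: (i) a subset $E\subset H$ is zero-dimensional as soon as $\pi(E)$ is (Theorem \ref{E=0}), so removing any dense subset of $G$ leaves a zero-dimensional remainder; and (ii) $G$ is nowhere relatively $\sigma$-compact in $H$, so by Hurewicz one can place \emph{closed} copies $F_n\approx\wo^\wo$ of Player I's strategy space inside every basic open set, which keeps the defining condition of the space at the $\borm 2$ level. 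The space is then $X=\{(\a,\tau,z):\ \forall n,\ z\in F_n\rightarrow(\a,h_n(z)\star\tau)\notin U\}$ for a good universal $\bora 2$ set $U$: a winning strategy $\s^*$ for I excludes the dense set $\{h_n^{-1}(\s^*):n\in\wo\}$ from the section $X(\a)$, forcing $\dim X(\a)=0$ by (i), while a winning strategy $\tau^*$ for II makes $X(\a,\tau^*)\supset H$, forcing $\dim X(\a)\geq 1$. To salvage your plan you would have to replace ``retained boundary terminals through which chains close up'' by a concrete planar set enjoying property (i), and replace the strategy-to-separator translation by some analogue of this density/full-section dichotomy; as written, neither exists.
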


We  point out  that the class  $\Game \bora 2$ is strictly larger than the more classical Selivanovski's class of  $C$-sets. In particular  the set $\FF_0(Y)$ in Theorem \ref{main2}  is not in  the  smallest   class containing all open sets and closed under complementation and  operation $\mathcal A\,$.  {This gives a partial answer to Question 9.12 of \cite{bcz} which asks about the complexity of $\FF_0(X)$  for an arbitrary  Polish   space $X$.

It is interesting to observe  that determining the exact complexity of  $\FF_0(X)$ for an explicit Polish space $X$ reveals to be a non trivial problem, as long as compactness is faraway. A typical example is the case of the space $\I^2\setminus\Q^2$ pointed out in \cite{pol} where the authors ask whether the set 
 $\FF_0(\I^2\setminus\Q^2)$  is $\ca$, a question still open.} However  it turns out that  the space  $Y$ of Theorem \ref{main2} can in fact   be embedded  as a  closed subset  of  $\I^5\setminus\Q^5$. It follows then:

\begin{theorem}      The  set   $\FF_0(\I^5\setminus\Q^5)$  is   $\Game \bora 2$-hard. \end{theorem}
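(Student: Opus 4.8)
The plan is to deduce the statement from Theorem \ref{main2} by transferring $\Game\bora 2$-hardness along a \emph{closed} embedding of $Y$ into $\I^5\setminus\Q^5$. Write $X=\I^5\setminus\Q^5$, and let $Y\subset 2^\wo\times\I^2$ be the Polish space provided by Theorem \ref{main2}, so that $\FF_0(Y)$ is $\Game\bora 2$-hard. It suffices to produce a Borel reduction of $\FF_0(Y)$ to $\FF_0(X)$, that is, a Borel map $\Phi\colon\FF(Y)\to\FF(X)$ with $\Phi^{-1}(\FF_0(X))=\FF_0(Y)$: composing any reduction of a $\Game\bora 2$ set to $\FF_0(Y)$ with $\Phi$ then yields a reduction of that same set to $\FF_0(X)$, so every $\Game\bora 2$ set reduces to $\FF_0(X)$ and the latter is $\Game\bora 2$-hard.

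Granting a closed embedding $e\colon Y\hookrightarrow X$ (discussed below), the reduction $\Phi$ is immediate. For $F\in\FF(Y)$ the image $e(F)$ is closed in $e(Y)$, and since $e(Y)$ is closed in $X$ it is closed in $X$; thus $\Phi(F):=e(F)$ genuinely defines a map $\FF(Y)\to\FF(X)$. This map is Borel for the Effros Borel structures, because for every open $V\subseteq X$ one has $\{F : e(F)\cap V\neq\emptyset\}=\{F : F\cap e^{-1}(V)\neq\emptyset\}$, and $e^{-1}(V)$ is open in $Y$, so the preimage of each Effros generator is again an Effros generator. Finally $e$ is a homeomorphism onto its image and zero-dimensionality is an intrinsic topological property of a subspace, whence $e(F)$ is zero-dimensional in $X$ if and only if $F$ is zero-dimensional in $Y$; this gives $\Phi^{-1}(\FF_0(X))=\FF_0(Y)$, as required.

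The one substantial point, and the main obstacle, is the existence of the closed embedding $e\colon Y\hookrightarrow\I^5\setminus\Q^5$. Writing $\adh{e(Y)}$ for the closure taken in $\I^5$, being closed in $\I^5\setminus\Q^5$ means that $e(Y)\cap\Q^5=\emptyset$ together with $\adh{e(Y)}\setminus e(Y)\subseteq\Q^5$; in other words $\adh{e(Y)}$ must be a metric compactification of $Y$ whose remainder is \emph{countable} and consists of rational points of the cube. The task is therefore not merely to embed the one-dimensional Polish space $Y$ into a cube, for which its dimension already permits a low-dimensional Euclidean target by the Menger--N\"{o}beling theorem, but to do so while pushing every adjoined limit point onto a rational point; the five coordinates of $\I^5$ supply the room needed to meet the avoidance condition $e(Y)\cap\Q^5=\emptyset$ and the closedness condition simultaneously. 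This is exactly where the concrete description of $Y$ coming from the construction of Theorem \ref{main2}, namely as a compact space with a countable set removed, is essential: such spaces are precisely the ones admitting a compactification with countable remainder, and one arranges the coordinates so that this remainder lands inside $\Q^5$. Granting this embedding, the second paragraph produces the desired Borel reduction and the theorem follows.
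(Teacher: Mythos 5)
Your second paragraph is sound and is exactly how the paper transfers hardness: once $Y$ embeds as a closed subspace of $X=\I^5\setminus\Q^5$, the set $\FF_0(Y)$ becomes (a continuous preimage of, indeed a closed subset of) $\FF_0(X)$, and hardness passes over. The genuine gap is the closed embedding itself, which you rightly call ``the one substantial point'' but then dispose of with an incorrect description of $Y$. The space $Y$ of Theorem \ref{main2} is \emph{not} ``a compact space with a countable set removed'': it is a $\Gd$ subset of $2^\wo\times\Sigma_{\rm II}\times H$, where $\Sigma_{\rm II}\approx\wo^\wo$ is the (non-compact) space of strategies for Player II and $H$ is a $\Gd$ hypergraph, and its complement in the natural compactification is uncountable. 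So the premise from which you would extract a compactification with countable remainder landing in $\Q^5$ fails, and nothing in your argument produces the required embedding.

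What the paper actually does is rebuild $Y$ from scratch so that such an embedding becomes possible, and this occupies essentially all of Section \ref{maxpol}. First, Lemma \ref{Univ*} shows that a $\bora 2$-universal game can be replaced by one in which Player II plays in $\{0,1\}$, making the strategy space $\Sigma_{\rm II}^*\approx 2^\wo$ compact (a genuinely game-theoretic step, not symmetric in the two players and specific to $\bora 2$). Second, Lemmas \ref{Fn} and \ref{Fnn} replace the $\borm 2$ copies $G_n$ of $\wo^\wo$ inside the hypergraph by \emph{closed} copies $F_n$ with $\adh{F_n}\approx 2^\wo$ and $\adh{F_n}\cap H=F_n$, while arranging $\adh H\setminus H$ to be countable. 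Only after these modifications is the complement of the new $Y$ in the compact space $K=2^\wo\times2^\wo\times\adh H$ a zero-dimensional $\s$-compact set; Lemma \ref{Fnnn} then collapses it, piece by piece, by a quotient map to obtain a compactification $L$ of $Y$ with \emph{countable} remainder and $\dim L\le2$, embeds $L$ in $\R^5$ by Menger--N\"obeling, and finally invokes the countable dense homogeneity of $\R^5$ to move the remainder (together with the rationals missed by $L$) onto $\Q^5$. None of these steps appears in your proposal, so as written the argument reduces the theorem to an unproved, and in fact harder, statement.
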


The construction of the space    $Y$  in Theorem \ref{main2}  relies on a class of subsets of the plane   that we call {\it hypergraphs}. More precisely the space $Y$   will    be   a  $\Gd$   subset of $2^\wo\times H$ where $H$ is a $\Gd$  hypergraph. This contrasts with the fact  that  the set $\FF_0(H)$ for $H$ itself is a $\Gd$ set.  
The precise  definition of  an hypergraph  is a bit technical.
As a first approach one can say  that  an  hypergraph is  a subset $H$ of the plane satisfying
  \hbox{$G\subset H\subset \adh G $}
where $G$ is the graph of some maximal continuous  function  \hbox{$f: P \subset I \to \R $}  {whose domain} $P$ is a co-countable subset of some interval $I$ and   strongly  oscillating around any element  of  $I\setminus P$. 

\red{ We  only state here  two  simple properties of hypergraphs:

\begin{theorem}   An hypergraph   is  a   one-dimensional connected Polish space which  contains  no non-trivial continuum.
 \end{theorem}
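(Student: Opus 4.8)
Throughout write $Q=\I\setminus P$, which is countable and, one checks, \emph{dense} in $\I$ (otherwise $P$ would contain a subinterval and $G$ an arc, a non-trivial continuum); write $\pi\colon H\to\I$ for the first projection and $J_{x_0}$ for the cluster set of $f$ at $x_0\in Q$, which ``strongly oscillating'' should mean to be a nondegenerate interval, so that $\{x_0\}\times J_{x_0}\subseteq\overline G$. Since $G\subseteq H\subseteq\overline G$, the graph $G$ is dense in $H$, and for every $x\in P$ continuity of $f$ on $P$ forces any element of $\overline G$ lying over $x$ to have height $f(x)$; thus $\pi^{-1}(x)\cap H=\{(x,f(x))\}$. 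The Polish property I would obtain from the precise definition, which is arranged to make $H$ a $G_\delta$ subset of the plane: $P$ is co-countable, hence $G_\delta$ in $\I$, so $G$ (being closed in $P\times\R$) is $G_\delta$ in $\R^2$, and the heights retained over $Q$ are prescribed $G_\delta$-ly; Alexandrov's theorem then gives complete metrizability.

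For the absence of non-trivial continua I would argue by projection. Let $K\subseteq H$ be a subcontinuum; then $\pi(K)$ is a compact connected subset of $\R$, a segment $[a,b]$. If $a<b$, then for each of the densely many $x\in P\cap(a,b)$ the fibre $\pi^{-1}(x)\cap H=\{(x,f(x))\}$ must lie in $K$, so $K$ contains the whole graph over $P\cap(a,b)$ and, being closed, its closure; by strong oscillation this closure contains $\{x_0\}\times J_{x_0}$ for any $x_0\in Q\cap(a,b)$, whence $H$ would contain a nondegenerate vertical segment. If $a=b$, then $K$ is itself a nondegenerate subsegment of a single fibre, necessarily over a point of $Q$. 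Either way $H$ would meet some vertical line over $Q$ in a set containing an interval, contradicting that the definition keeps only a totally disconnected set of heights over each point of $Q$. This same zero-dimensionality of all fibres gives the upper dimension bound: $H=G\cup(H\cap\pi^{-1}(Q))$, where $G\cong P$ is zero-dimensional and $H\cap\pi^{-1}(Q)$ is a countable union of closed zero-dimensional fibres, hence zero-dimensional by the sum theorem, so the addition theorem $\dim(X\cup Y)\le\dim X+\dim Y+1$ yields $\dim H\le1$.

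The remaining and, I expect, hardest point is connectedness, from which $\dim H\ge1$ (hence $\dim H=1$) follows immediately. I would first verify that the full closure $\overline G$ is connected: any partition of $\overline G$ into two nonempty relatively clopen pieces restricts, via the homeomorphism $x\mapsto(x,f(x))$, to a clopen partition $P=P_A\sqcup P_B$; since $\I$ is connected the closures of $P_A$ and $P_B$ meet, necessarily at some $x_0\in Q$, and there the connected segment $\{x_0\}\times J_{x_0}$ lies in one piece while graph points of the other piece converge into it, a contradiction. For $H$ itself this is not yet enough, because $H$ omits most of each segment. Assuming a clopen partition $H=A\sqcup B$, both pieces are closed in $H$, so $\overline A\cap\overline B$ (closures in $\R^2$) misses $H$; yet connectedness of $\overline G$ forces $\overline A\cap\overline B\neq\emptyset$, and every common point has the form $(x_0,y_0)$ with $x_0\in Q$ and $y_0\in J_{x_0}$ a value that $H$ does \emph{not} retain, while $y_0$ is a cluster value of $f$ along $P_A$ and along $P_B$ simultaneously.

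The crux is then to exclude this configuration: one must show that the totally disconnected set of heights $H$ keeps over each $x_0\in Q$ nevertheless meets the set of such ``two-sidedly approached'' values, so that the would-be gap point $(x_0,y_0)$ in fact belongs to $H$ and hence to $A\cap B=\emptyset$. This is precisely the Knaster--Kuratowski phenomenon of a connected set with totally disconnected fibres, and I expect it to be the technical heart of the construction: it is here that the exact meaning of ``maximal'' and ``strongly oscillating'' must be spent, presumably through a Baire-category argument showing that for each $x_0$ the relevant sets of doubly-approached values are comeager in $J_{x_0}$, so that retained heights can be chosen non-avoidable. Granting connectedness, $H$ is a nondegenerate connected separable metric space, so $\dim H\ge1$; combined with the previous bound this gives $\dim H=1$, and together with the $G_\delta$ (Polish) property and the absence of non-trivial continua the statement follows.
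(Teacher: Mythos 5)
Your treatment of two of the three substantive claims is sound and in places more direct than the paper's. For the absence of non-trivial continua you project a subcontinuum $K$ to a segment $[a,b]$ and, using that fibres over $P$ are singletons, force $K$ to contain $\adh{G}\cap\bigl((a,b)\times\R\bigr)$ and hence a whole vertical interval over a point of $Q$, contradicting that $H(x_0)$ has empty interior in $\adh G(x_0)$; the paper instead first proves that every compact subset of $H$ has empty interior and then invokes its characterization of zero-dimensional closed subsets, so your route is shorter. For $\dim H\le 1$ you decompose $H=G\cup(H\cap\pi^{-1}(Q))$ into two zero-dimensional pieces and apply the addition theorem, whereas the paper exhibits a basis of neighbourhoods of $\adh G$ with zero-dimensional boundaries; both work (granting, as you note, that $Q$ is dense, which the theorem forces the definition to mean). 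The Polishness discussion is adequate modulo the same imprecision the paper itself leaves (only fibres $H(a)$ that are $G_\delta$ in $\adh G(a)$ give a Polish $H$).

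The genuine gap is connectedness, which you explicitly do not prove ("Granting connectedness\dots"), and this is the heart of the theorem: $\dim H\ge 1$ also hangs on it. Moreover your guess at the mechanism --- a Baire-category argument showing that the "two-sidedly approached" values are comeager in $\adh G(x_0)$ so that the retained heights "can be chosen non-avoidable" --- points in the wrong direction: no property of the particular dense set $H(x_0)$ beyond nonemptiness is used, and nothing is "chosen" to block gaps. The paper proves that \emph{any} $E\subset H$ whose projection is a non-trivial interval is connected. Given a clopen partition $E\subset U_0\sqcup U_1$, upper semicontinuity of $x\mapsto\{x\}\times\adh G(x)$ makes the set $F$ of points $x$ where $\adh G(x)$ is not entirely absorbed by one $U_i$ a \emph{countable closed} subset of $Q$, and one then runs a transfinite induction on the Cantor--Bendixson derivatives $F^{(\xi)}$. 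The step that kills your "gap point" configuration is clause a) of the construction, $\Omega f(a)=\Omega^-f(a)=\Omega^+f(a)$: at an isolated point $a$ of $F^{(\eta)}$, every $b\in E(a)$ is a limit of graph points from the left and from the right, each side already decided by the induction hypothesis, so $\td E(a)\subset U_{i_-}\cap U_{i_+}$, forcing $i_-=i_+$ since $E(a)\neq\emptyset$. So the ingredient you need to "spend" is the two-sidedness of the oscillation together with a derivative induction over the countable set of undecided points --- not comeagerness of the retained heights --- and without some such argument your proof of the statement is incomplete.
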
 
 
\begin{theorem} A closed subset of an hypergraph is zero-dimensional if and only if it is of empty interior.
 \end{theorem}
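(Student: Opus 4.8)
The plan is to recast the statement in dimension-theoretic terms and to treat the two implications separately, the substantial one being ``empty interior $\Rightarrow$ zero-dimensional''. Throughout write $H$ for the hypergraph, $f\colon P\to\mathbb R$ for the underlying maximal continuous function with $G$ its graph, $B=I\setminus P$ the (countable) set of bad points, and $\pi\colon H\to I$ the first projection. From $G\subseteq H\subseteq\overline G$ one gets $\overline G^{\,H}=H$, so $G$ is dense in $H$; moreover $\pi$ has the single point $(x,f(x))$ as fiber over each $x\in P$ and a totally disconnected (compact, zero-dimensional) fiber $H_a=H\cap(\{a\}\times\mathbb R)$ over each $a\in B$. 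I will repeatedly use an elementary consequence of the continuity of $f$ on $P$: at a good point $x_0\in P$ the oscillation of $f$ is null, so the limit segments $\overline G\cap(\{a\}\times\mathbb R)$ over bad points $a$ near $x_0$ shrink to $(x_0,f(x_0))$; hence the $\pi$-saturated pieces $H\cap\pi^{-1}(V)$, with $V$ a small interval around $x_0$, have arbitrarily small diameter. Since $\dim H=1$, the theorem is equivalent to the assertion that, for a closed $F\subseteq H$, one has $\dim F\ge 1$ iff $\mathrm{int}_H F\ne\emptyset$.

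For ``$\dim F=0\Rightarrow\mathrm{int}_H F=\emptyset$'' I argue the contrapositive. If $U:=\mathrm{int}_H F\ne\emptyset$ then, $G$ being dense, $U$ contains a good graph point $(x_0,f(x_0))$; by the shrinking observation a small saturated slab $H\cap\pi^{-1}(V)$ is contained in $U$. This slab is again a hypergraph (over the subinterval $V$, whose bad points are still strongly oscillating), so by the preceding theorem it is connected and one-dimensional. As it is contained in $F$, we get $\dim F\ge 1$, hence $F$ is not zero-dimensional.

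For the converse assume $\mathrm{int}_H F=\emptyset$. First I show $F$ is totally disconnected. A nondegenerate connected $C\subseteq F$ cannot be contained in a single fiber (these are zero-dimensional), so $\pi(C)$ is a nondegenerate interval; as the fiber over each good point is a single graph point, $C$ contains $G\cap\pi^{-1}(J)$ for some open subinterval $J$, and since $F$ is closed and $G$ is dense in the corresponding sub-hypergraph, $F$ contains the saturated slab $H\cap\pi^{-1}(J)$, contradicting empty interior. The same slab argument shows that $\pi(F)\subseteq\mathbb R$ contains no interval, hence is zero-dimensional. Thus $\pi|_F\colon F\to\pi(F)$ maps onto a zero-dimensional base with zero-dimensional compact fibers, which would force $\dim F\le\dim\pi(F)+\sup_{a}\dim(F\cap H_a)=0$ by the Hurewicz theorem bounding the dimension of the domain of a closed map by that of its base and fibers.

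The main obstacle lies exactly here: $\pi|_F$ is \emph{not} a closed map, so Hurewicz does not apply off the shelf. Indeed the graph accumulates onto heights that are deliberately omitted from the bad fibers $H_a$ -- these omissions being precisely what keeps $H$ free of nontrivial continua in the preceding theorem -- so a sequence of graph points of $F$ may escape $H$ while its projection converges into $\pi(F)$. I therefore plan to build a clopen basis by hand. The good cut values $c\notin\pi(F)$ are dense in $I$ (as $\pi(F)$ has empty interior), and cutting at two such values on either side of a given point of $F$ yields $\pi$-saturated clopen-in-$F$ slabs of arbitrarily small $x$-width; at a good point these already have small diameter by the shrinking observation, so only the case of a point on a bad fiber remains. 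Separating within, and near, a bad fiber is the crux, and it is where the precise, more technical definition of a hypergraph must enter: one needs, over each $a\in B$, a neighbourhood basis of $\pi$-saturated boxes whose horizontal edges avoid $H_a$ -- and hence avoid $F$ -- thereby turning the fiberwise zero-dimensional separation into genuine clopen-in-$F$ sets of small diameter. Checking that the construction of $H$ indeed supplies such boxes is the step I expect to carry the real weight of the proof.
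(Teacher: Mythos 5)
Your easy direction is fine and essentially the paper's: a nonempty interior must contain a $\pi$-saturated slab over a small interval around a good point, such a slab is again a hypergraph, hence connected and one-dimensional. The hard direction, however, is not proved. You reduce it to constructing small clopen-in-$F$ neighbourhoods at points of the bad fibers and then explicitly defer that construction (``the step I expect to carry the real weight of the proof''), and the mechanism you sketch for it would fail. Choosing cut heights $b'<b<b''$ outside $H(a)$ does make the horizontal edges miss the fiber over $a$, but it does not make a box $V\times\,]b',b''[$ open, let alone clopen, in $F$: the graph part of $F$ oscillates through every height of $\adh G(a)$ --- in particular through $b'$ and $b''$ --- at good points $x\neq a$ arbitrarily close to $a$, so a point $(a,y)$ with $y\in\,]b',b''[$ is a limit of points of $F$ lying outside the box. (A box also cannot be $\pi$-saturated near $a$, since the fiber over $a$ is dense in a nontrivial interval; and these bad fibers are dense with empty interior in an interval, hence zero-dimensional but \emph{not} compact as you assert, though this only affects the Hurewicz route you rightly abandon. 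Likewise total disconnectedness of $F$ buys nothing: Erd\H os space, discussed in this very paper, is totally disconnected and one-dimensional.)

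What closes the gap in the paper (Theorem \ref{E=0}, from which the stated equivalence is then a short corollary) is precisely clause b) of the definition of a hypergraph: at a bad point $a$ one writes $f=g+h$ with $h$ continuous at $a$ and $g$ carrying the oscillation, picks $b',b''\notin H(a)$, and covers the punctured projection $Z=I'\cap\pi(F)\setminus\{a\}$ by the three \emph{tilted} open sets $U=\{x: b'-h(a)<g(x)<b''-h(a)\}$, $U_{-}=\{x: g(x)<b'-h(a)+|x-a|\}$, $U_{+}=\{x: g(x)>b''-h(a)-|x-a|\}$. Since $\pi(F)$ is zero-dimensional this covering refines to a clopen partition of $Z$, and the sought neighbourhood is $\bigl(\{a\}\times\,]b',b''[\,\cup\,\pi^{-1}(U')\bigr)\cap F$ --- the union of a segment of the fiber with a $\pi$-saturated set, not a box. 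The tilt by $|x-a|$ is exactly what forbids a sequence from the other two pieces converging to $(a,y)$ with $y\in\,]b',b''[$: along the graph $g(x_j)=f(x_j)-h(x_j)\to y-h(a)$, which eventually violates the tilted inequalities. Without this device (or an equivalent one exploiting clause b)) your clopen basis does not exist as described, and the main implication remains unestablished.
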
 
}

 \end{section}

  \begin{section}{General notions and   notations}   

\begin{ssection}{\bf Descriptive classes}:\label{DST}  
{\rm
In all the paper we shall work in the frame of separable metrizable, mostly Polish,  spaces.
 We assume the reader to be familiar with basic notions and results from Descriptive Set  Theory, for example as presented in \cite{kc} or \cite{mo}.  

A set $A$ is always viewed as a subset  of some  Polish space $X$, and 
the  complexity  of  $A$ is then   relative to the given space $X$.
We recall the fundamental notion of {\it reduction}: given $A\subset X$ and  $B\subset Y$  the  pair $(A,X)$  is said to be   {\it reducible}  to the  pair $(B,Y)$  
if there exists a continuous mapping $\varphi: X\to Y$ such that $A=\varphi^{-1}(B)$.
A  {\it descriptive class}  (or more simply a {\it  class}) is  a class  of such pairs $(A,X)$  closed under
reducibility.

In fact we shall identify  a set $A$ with a   pair $(A, X)$ for  {\it some (any)} Polish space $X\supset A$ depending on the notion. In particular we shall view descriptive classes as classes of sets and when we   speak about the {\it complexity of  $A$}    we  mean  its   complexity relatively to {\it any}  Polish space $X$ in which $A$ embeds, that is the {\it absolute complexity} of $A$. But  when we   say that $A$  is   {\it reducible} to $B$  we mean that $(A,X)$  is      reducible   to    $(B,Y)$   for {\it some} Polish spaces $X\supset A$ and $Y\supset B$. 

 We shall consider  in the sequel     the classical  classes  that we list next, but also the less popular classes $\Game\G$ that we will present later on in   Section \ref{maxborel}:

\

 $\bora \xi$: the additive Baire class of rank $\xi$ 
 
 $\borm \xi$: the multiplicative Baire class of rank $\xi$
 
 \qquad (for $\xi=2$ we shall also use the classical  notation  $\Fs$  and $\Gd$) 

 $\ana$: the class of analytic sets.

 $\ca$: the class of co-analytic sets.

  $\boldsymbol{\Sigma}^1_2$:  the class  of projections of  co-analytic sets, also denoted by $PCA$.

 $\boldsymbol{\Pi}^1_2$:  the class  of complements of $PCA$ sets, also denoted by $CPCA$.

  $\boldsymbol{\Delta}^1_2 :=\boldsymbol{\Sigma}^1_2 \cap \boldsymbol{\Pi}^1_2$

\medskip

Given some class $\G$   a set $A$ (not necessarily in $\G$) is said to be 
 {\it $\G$-hard}  if   any set $A'\subset 2^\wo$  with $A'\in \G$ is reducible to $A$; if moreover $A$  is in $\G$ then $A$  is said to be  {\it $\G$-complete}.  If $A$ is    $\G$-hard  and   $A$ is reducible to   $B$ 
 (in particular if $A$ is a closed subset of $B$) then $B$ is $\G$-hard too.

 To any class $\G$ we associate its {\it dual class} $\check \G= \{A^c: \ A\in \G\}$. If $\G$  is not {\it self-dual} ($\G\not=\check \G$) then any   $\G$-complete set   is in $\G\setminus\check \G$
 
Computing  the exact complexity  of a given set $A$  is finding a class $\G$ such that $A$ is $\G$-complete. When this is not accessible one then looks for  a lower  (upper) bound of this complexity that is a class $\G$  such that 
$A$ is  $\G$-hard  ($A$ is in $\G$) 

\sk

For any mapping $f:X\to Y$ we denote by $\Gr(f)$ its graph.}  
\end{ssection}

\begin{ssection} {\bf Sections in product spaces:} \label{notation} 
 {\rm We shall very often work in product spaces $X\times Y$ and we need to fix a number of practical conventions. Given a  set $E\subset X\times Y$ and  an element  $a\in X$ (the first factor) we denote
 by
 $$E(a)=\{y\in Y: \ (a,y)\in E\}$$ 
 the section of $E$ at $a$  parallel to  the second  factor.   
 We also associate to $E$ the set 
  $$\td E= \{(x,(x,y))\in X\times (X \times Y): \ (x,y)\in E\}\subset X \times E$$ 
hence
$$\td E (a)=\{a\}\times E(a) \subset E. $$

\nd Observe that as a subset of $Y$  the set $E(a)$ might be as complicated as the set  $E$  while
 $\td E (a)$ is a closed subset of  $E$.  
}
\end{ssection} 

\begin{ssection}{\bf Admissible topologies on $\FF(X)$:}\label{F(X)}
{\rm 
For any  topological space $(X, \tau)$ we denote by $\FF(X)$ the space of all closed  
subsets of $X$.

We recall that the   Vietoris   topology on  the hyperspace  $\FF(X)$ is  the  topology $\td\tau$  generated by the sets of the form:
$$\FF  (V)=\{F\in \FF(X): \ F\subset V\} \quad \text{ and }  \quad  
\FF ^+(V)=\{F\in \FF(X): \ F\cap V \not=\0\}$$ 
 where $V$ is an arbitrary open subset of $X$.  It is well known that  if  $(X, \tau)$ is compact then 
$(\FF(X), \td\tau)$ is compact  too; and if moreover $X$ is metrizable then $\td\tau$   is the topology induced by the  Hausdorff metric associated to any compatible metric on  $X$.

For an arbitrary separable metrizable  space $(X, \tau)$ the topology $\td\tau$ is no more metrizable. However  one can embed  $X$ in a compact space $\hat X$ (not necessarily densely)     and then identify any set $F\in \FF(X)$ with   $\adh F$  (its  closure    in $\hat X$). The set $\FF(X)$ is then  identified  to  the set:
$$\FF(X, \hat X)=\{K\in \FF(\hat X): \ \adh{K\cap X}=K\} $$   
Observe that conversely one can recover $\FF(X)$ from $\FF(X, \hat X)$ since:
$$\FF(X)=\{K\cap X: \ K\in \FF(X,\hat X)\}. $$   
This identification   induces on $\FF(X)$ a topology $\td\tau_{\hat X}$ which depends on the specific compactification $\hat X$.

A topology on $ \FF(X)$ will   be said to be  {\it admissible} if it is of the form $\td\tau_{\hat X}$  for {\it some} compactification $\hat X$.   
We point out that any two  admissible topologies    on $\FF(X)$ are first Baire class isomorphic.
 In particular  if $\G$  is any descriptive class which is invariant  by first Baire class isomorphisms, and 
 $\td\tau_{\hat X_1}$ ,  $\td\tau_{\hat X_2}$ are  two compactifications  of $X$,  a set $S\subset \FF(X)$ is in $\G(\td\tau_{\hat X_1})$ iff  $S$ is in $\G(\td\tau_{\hat X_2})$. This  is in particular the case if $\G$ is a projective class or a Baire class of infinite rank.}  
 
\

{\it In all the sequel   $ \FF(X)$ will       implicitly be  endowed with an admissible topology}.

\nd {\rm We state next the main properties (see \cite{kc}  Section 12 C) that we will  use in the sequel: 
\medskip

{\it
{\bf a)} If $X$ is  analytic (Polish) then $\FF(X)$  is analytic (Polish).
\smallskip

{\bf  b)}  If $A$ is open (closed, compact) in $X$ then the set  $\{F\in  \FF(X):  F\cap A\not=\0\}$    

\quad is open (analytic, closed) in $\FF(X)$.
\smallskip
 
 {\bf  c)} The set   $\{(F,F')\in  \FF(X)\times\FF(X): \ F\subset F'\}$ is   Borel  in  $\FF(X)\times\FF(X)$.}

\medskip

We mention that one can find in the literature other natural Polish topologies on the hyperspace
of closed subsets of a Polish space $X$, such as the  {Wijsman}  topology. In most of these topologies 
the sets of the form $\FF^+(V)$ is open for $V$ open subset of $X$; it follows then  {that} the Borel structure
generated by such a topology coincides with the  Borel structure  generated by any admissible topology, that is the  Effros Borel structure. Also  as far as one is interested
in  descriptive classes of higher complexity than the Borel class, which is the case for our study, the precise choice of the topology is irrelevant.
 
}\end{ssection}

\medskip
\begin{ssection}{\bf Topological dimension:}
{\rm 
We recall the following  fundamental result  from Dimension Theory, that  we shall very often use in the sequel:}
\end{ssection}

\begin{theorem}\label{dim}   Suppose that $X=\bigcup_{n\geq 0} X_n$.  If   for all  $n > 0$, $X_n$  is  a closed subset  of $X$ and  $\dim(X_n)=0$  then $\dim(X)=1+\dim (X_0)$. 
  \end{theorem}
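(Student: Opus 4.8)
The plan is to pin $\dim(X)$ down to the two-element set $\{\dim(X_0),\,1+\dim(X_0)\}$ by a routine use of the classical sum and addition theorems, and then to identify which of the two values occurs. Throughout one works in separable metrizable spaces, where both theorems are available and where subspaces inherit an upper bound on dimension.

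The upper bound $\dim(X)\leq 1+\dim(X_0)$ is the soft half. Put $Z=\bigcup_{n\geq 1}X_n$. Each $X_n$ with $n\geq 1$ is closed in $X$ and satisfies $\dim(X_n)=0$, so the countable closed sum theorem applied to the family $(X_n)_{n\geq 1}$ gives $\dim(Z)=\sup_{n\geq 1}\dim(X_n)=0$; moreover $Z$ is an $F_\sigma$ subset of $X$. Since $X\setminus X_0\subseteq Z$, subspace monotonicity gives $\dim(X\setminus X_0)\leq 0$, and the addition (logarithmic) theorem applied to the splitting $X=X_0\cup(X\setminus X_0)$ yields $\dim(X)\leq \dim(X_0)+\dim(X\setminus X_0)+1\leq 1+\dim(X_0)$.

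The lower bound $\dim(X)\geq 1+\dim(X_0)$ is the delicate half and the step I expect to be the main obstacle. Subspace monotonicity gives only $\dim(X)\geq\dim(X_0)$, so by the upper bound everything reduces to excluding the value $\dim(X_0)$, that is, to showing that adjoining the closed zero-dimensional pieces raises the dimension by exactly one rather than leaving it unchanged. This will not follow from monotonicity alone; one must exploit how the pieces $X_n$ sit inside $X$ (their union being dense while each $X_n$ is nowhere dense) together with the local structure of $X_0$. The route I would take is to make the extra unit of dimension explicit: produce inside $X$ a topological copy of $C\times\I$ for some subspace $C\subseteq X_0$ with $\dim(C)=\dim(X_0)$, equivalently an essential map of $X$ onto the cube $\I^{\,1+\dim(X_0)}$, and then conclude $\dim(X)\geq\dim(C)+1=1+\dim(X_0)$ using that multiplication by $\I$ raises dimension by exactly one. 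Converting the abstract covering hypothesis into such an explicit product (or essential family) is the crux of the argument, and it is here that the specific geometry of the spaces to which the theorem is applied must enter.
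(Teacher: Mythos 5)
Your first paragraph is correct and is the standard argument: the countable closed sum theorem applied to $(X_n)_{n\geq 1}$ (each $X_n$ is closed in $Z=\bigcup_{n\geq1}X_n$) gives $\dim(Z)\leq 0$, and the addition theorem applied to $X=X_0\cup(X\setminus X_0)$ gives $\dim(X)\leq \dim(X_0)+1$. There is nothing in the paper to compare this with: Theorem \ref{dim} is recalled as background from Dimension Theory and no proof is given. The genuine problem is your second half. The lower bound $\dim(X)\geq 1+\dim(X_0)$ is not ``the delicate half'' awaiting a clever construction; it is false, and with it the equality as printed. Counterexample: $X=X_0=\I$ and $X_n=\{0\}$ for all $n\geq1$ satisfy every hypothesis, yet $\dim(X)=1\neq 2=1+\dim(X_0)$. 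Your sketch already betrays the difficulty: the hypotheses say nothing about $\bigcup_{n\geq1}X_n$ being dense or the $X_n$ being nowhere dense --- you introduced those --- and even they do not rescue the equality: in $X=2^\wo$ let $(X_n)_{n\geq1}$ enumerate the singletons of a countable dense set $D$ and put $X_0=X\setminus D$; each $X_n$ is closed, zero-dimensional and nowhere dense, their union is dense, and still $\dim(X)=0\neq 1=1+\dim(X_0)$. Hence no copy of $C\times\I$ with $\dim(C)=\dim(X_0)$, and no essential map onto $\I^{1+\dim(X_0)}$, can exist in general; the program you outline (and leave unexecuted) cannot be completed.

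What is true --- and all the paper ever uses --- is the inequality $\dim(X)\leq 1+\dim(X_0)$, i.e.\ your first paragraph is the entire proof, together with the degenerate case $X_0=\0$ (Theorem \ref{dim0}, where equality does hold since $\dim(\0)=-1$). You can confirm this reading against the applications: in Lemma \ref{Fnnn} the theorem is invoked to deduce $\dim(L)\leq 2$ from $\dim(Y)\leq 1$ plus a countable remainder, an upper bound; and in Corollary \ref{E0} the implication $\dim(\td E(x))=0\Rightarrow\dim(\hat E(x))=0$ is compatible only with the sum-theorem reading, since taking $X_0=\td E(x)$ in the printed equality would force $\dim(\hat E(x))=1$ there. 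So the correct resolution is to prove the ``$\leq$'' statement exactly as you did, note that the ``$=$'' in the statement is an overstatement of ``$\leq$'', and stop; no lower-bound argument should be attempted.
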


In particular if $X_0=\0$ one gets: 

\begin{theorem}\label{dim0}   If $X=\bigcup_{n\geq 0} X_n$ and for all  $n \geq 0$, $X_n$  is  a closed subset  of $X$ and  $\dim(X_n)=0$  then $\dim(X)=0$. 
  \end{theorem}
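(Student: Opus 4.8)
The plan is to obtain this statement as the degenerate case of Theorem \ref{dim} in which the distinguished piece is empty. Concretely, given a decomposition $X=\bigcup_{n\geq 0}X_n$ into countably many closed subspaces, each of dimension $0$, I would relabel the pieces so as to manufacture a new decomposition whose zeroth term is the empty set. Put $X_0'=\emptyset$ and let $(X_n')_{n\geq 1}$ be any re-enumeration of the family $(X_n)_{n\geq 0}$. Then $X=\bigcup_{n\geq 0}X_n'$, and for every $n>0$ the set $X_n'$ is a closed subset of $X$ with $\dim(X_n')=0$; thus the hypotheses of Theorem \ref{dim} are met by this re-indexed decomposition.

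Applying Theorem \ref{dim} to it then yields $\dim(X)=1+\dim(X_0')=1+\dim(\emptyset)$. The only point demanding care is the normalization $\dim(\emptyset)=-1$, which is the standard convention of dimension theory and is precisely the value that makes the additivity formula of Theorem \ref{dim} behave correctly at its boundary case. With this convention one gets $\dim(X)=1+(-1)=0$, as claimed.

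I do not expect any genuine obstacle here: no inductive construction, shrinking of covers, or separating-family argument is required beyond invoking the previous theorem, so the entire content of the statement lies in recognizing that the countable closed sum theorem for $0$-dimensional spaces is exactly Theorem \ref{dim} read off the empty distinguished piece. The only thing one must resist is the temptation to feed the given $X_0$ itself into Theorem \ref{dim} as its distinguished term, which would (incorrectly) produce $\dim(X)=1+\dim(X_0)=1$; prepending a genuinely empty piece and pushing all the $0$-dimensional pieces into the indices $n>0$ is what makes the reduction go through.
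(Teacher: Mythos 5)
Your proposal is correct and is exactly the paper's route: Theorem \ref{dim0} is presented there simply as the special case of Theorem \ref{dim} obtained by taking the distinguished piece $X_0=\emptyset$, with the standard convention $\dim(\emptyset)=-1$ giving $\dim(X)=1+(-1)=0$. (The tension you flag about feeding the given $X_0$ itself into Theorem \ref{dim} merely shows that the equality there should be read as the inequality $\dim(X)\leq 1+\dim(X_0)$, which is all that is ever used.)
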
  
\end{section}


\begin{section}{Computing the descriptive complexity of  $\FF_0(X)$}\label{F0(X)}

\begin{ssection}
{\bf Pseudo-graphs and canonical mappings:} {\rm  The graph   of a   mapping
  \hbox{$\Phi: X\to \FF(Y)$}  is of course a  subset  of $X\times \FF(Y)$
 but one can also consider the  graph of the canonical set-valued mapping (with possibly empty values) associated to $\Phi$  that is the set:  
  $$E= \{(x,y)\in X\times Y: \ y\in \Phi(x)\}.$$ 
that we shall call the {\it pseudo-graph} of $\Phi$.
Observe that if  $(U_n)$ is any  countable basis  of   topology in $Y$ and for all $n$,
$A_n=  \{x\in X: \  \Phi(x)\cap U_n\not=\0\}$
then  $E=\bigcap_n (X\times U_n^c)  \cup (A_n\times U_n)$.
It follows from \ref{F(X)}.a) that if  the mapping  $\Phi$ is   Borel (continuous)  then  $E$  is a Borel ($\Gd$) subset of   $X\times Y$ with closed sections. Conversely any subset $E$ of $X\times Y$ with closed sections is the pseudo-graph of a uniquely determined  mapping  $\Phi_E: X\to \FF(Y)$ that we call the {\it canonical mapping   associated to $E$}. But  if $E$ is Borel or even $\Gd$ the    mapping $\Phi_E$ is not Borel in general. However as we shall see in next result  a kind of converse is true if we work  in one additional dimension. 

We  recall that to any set $E\subset X\times Y $  we   associate  the closed subset of $X \times E$ defined by:
 $$\td E= \{(x,(x,y))\in X\times (X \times Y): \ (x,y)\in E\}\subset X \times E.$$ 
Since all   sections of $\td E$ are  closed subsets of $E$ one can consider then the  canonical 
mapping  $\Phi_{\td E}: X\to \FF(E)$. It turns out that the mapping  $\Phi_{\td E}$ has a better behavior than $\Phi_E$. Still if $E$ is Borel  or even $\Gd$
the mapping $\Phi_{\td E}$ is not Borel in general. However we have the following:
}\end{ssection}

 \begin{theorem}\label{hatE}
  Let $X$ and $Y$ be two separable metrizable spaces. Given any set  $E\subset X\times Y $ and any admissible topology on $\FF(E)$ there exists  a   subset  $\hat E$ of $X\times E$   satisfying:

a) $\tilde E \subset \hat E\subset X\times E.$
   
 b) $\hat E$ is closed in $X\times E$  and for all $x\in   X$,    $ \hat E (x)\setminus \td E(x) $ is  countable.  
 
 c) The canonical mapping  $\Phi_{\hat E}: X \to \FF(E)$ is of the first Baire  class and even  

\quad continuous  if $X$ is zero-dimensional.
\end{theorem}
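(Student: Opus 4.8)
The plan is to realize $\Phi_{\hat E}$ as (essentially) the \emph{germ closure} of $E$ along the first variable. Fix a compactification $Z\supseteq E$ inducing the given admissible topology on $\FF(E)$, so that $F\in\FF(E)$ is identified with $\adh F$ (closure in $Z$). For $A\subseteq X$ write $E_A=E\cap(A\times Y)$, and for a fixed sequence $V_0(x)\supseteq V_1(x)\supseteq\cdots$ of neighbourhoods of $x$ with $\diam V_n(x)\to0$ set
\[
\Gamma(x)=\bigcap_n\adh{E_{V_n(x)}}\ \subseteq Z .
\]
The first thing I would record is a \emph{Key Lemma}: $\Gamma(x)\cap E=\td E(x)$. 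Indeed $\td E(x)=E_{\{x\}}\subseteq\Gamma(x)\cap E$ trivially, while if $e\in\Gamma(x)\cap E$ then $e\in\adh{E_{V_n(x)}}\cap E=\adh{E_{V_n(x)}}^{\,E}\subseteq \adh{V_n(x)}\times Y$, so the first coordinate of $e$ lies in $\bigcap_n\adh{V_n(x)}=\{x\}$, i.e. $e\in\td E(x)$. The point is that although $\Gamma(x)$ may be a large closed set, all of its genuine points of $E$ already lie in $\td E(x)$; the ``new'' part of $\Gamma(x)$ sits in $Z\mns E$ and can therefore be filled by adjoining only \emph{countably} many points of $E$. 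I would also note that $x\mapsto\Gamma(x)$ is upper semicontinuous (its graph is closed in $X\times Z$, by the same shrinking-diameter estimate), and that an upper semicontinuous map into the compact metric space $\FF(Z)$ is automatically of the first Baire class.

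Assume first that $X$ is zero-dimensional and fix nested clopen partitions $(\mathcal P_n)$ of $X$ with vanishing mesh, denoting by $C_n(x)\in\mathcal P_n$ the piece containing $x$, so that $\bigcap_nC_n(x)=\{x\}$. For each $n$ and each $C\in\mathcal P_n$ choose a \emph{finite} set $F_C\subseteq E_C$ which is $\tfrac1{n+1}$-dense in $\adh{E_C}$, and define
\[
\hat E=\td E\ \cup\ \bigcup_n\ \bigcup_{C\in\mathcal P_n}\ \bigl(C\times F_C\bigr)\subseteq X\times E,
\qquad\text{so that}\qquad
\hat E(x)=\td E(x)\cup\bigcup_nF_{C_n(x)} .
\]
Then (a) is immediate, and the section-difference $\hat E(x)\mns\td E(x)\subseteq\bigcup_nF_{C_n(x)}$ is a countable union of finite sets, giving the countability in (b). For the closedness in (b) I would take $(x_j,e_j)\to(x,e)$ in $X\times E$ with $e_j\in\hat E(x_j)$ and split into cases: if infinitely many $e_j$ lie in $\td E(x_j)$ their first coordinates are $x_j\to x$, so $e\in\td E(x)$; if infinitely many lie in $F_{C_n(x_j)}$ for a \emph{fixed} $n$, then since $C_n(x)$ is clopen one has $x_j\in C_n(x)$ eventually, hence $F_{C_n(x_j)}=F_{C_n(x)}$, a fixed finite set, so $e$ belongs to it; finally if $e_j\in F_{C_{n_j}(x_j)}$ with $n_j\to\infty$, the first coordinate of $e_j$ lies in $C_{n_j}(x_j)\ni x_j$ of vanishing diameter, so it tends to $x$ and $e\in\Gamma(x)\cap E=\td E(x)$ by the Key Lemma. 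This last case is exactly where the lemma does the work.

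For the continuity in (c), upper semicontinuity follows from the same computation (a convergent sequence $p_j\in\adh{\hat E(x_j)}$ is approximated by points of $\hat E(x_j)$, whose first coordinates either stabilise at $x$ or tend to $x$, so $p\in\Gamma(x)\subseteq\adh{\hat E(x)}$, using that $\tfrac1{n+1}$-density gives $\adh{\hat E(x)}\supseteq\Gamma(x)$). For lower semicontinuity, let $x_j\to x$ and $p\in\adh{\hat E(x)}$: if $p$ is one of the net points $F_{C_n(x)}$ then $p\in\hat E(x_j)$ for all large $j$ by local constancy, while if $p\in\Gamma(x)$ the local constancy $F_{C_n(x_j)}=F_{C_n(x)}$ together with the $\tfrac1{n+1}$-density lets me select, along a diagonal $n=n(j)\to\infty$, points of $F_{C_{n(j)}(x_j)}\subseteq\hat E(x_j)$ converging to $p$. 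Thus $\Phi_{\hat E}$ is both upper and lower semicontinuous, hence continuous.

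For general separable metric $X$ the natural attempt is to run the same scheme with the clopen pieces replaced by \emph{closed} activation regions of vanishing diameter (for instance obtained by clopenizing a countable basis to get a finer zero-dimensional topology $\tau'$ and transporting along the continuous bijection $(X,\tau')\to(X,\tau)$, whose inverse is of the first Baire class). The $n_j\to\infty$ part of the closedness argument still goes through via the Key Lemma, and once $\hat E$ is closed with $\adh{\hat E(x)}\supseteq\Gamma(x)$ the map $\Phi_{\hat E}$ is upper semicontinuous and therefore of the first Baire class; what is lost is precisely the \emph{openness} of the activation regions, so lower semicontinuity — and with it full continuity — is available exactly in the zero-dimensional case. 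I expect the main obstacle to be the reconciliation of the three requirements of (b)--(c) at once: $\hat E$ must be \emph{closed} in $X\times E$ (forcing the activation regions to shrink), the sections may grow by only \emph{countably} much (which is what the Key Lemma guarantees), yet $\adh{\hat E(x)}$ must remain large enough to contain $\Gamma(x)$ (forbidding the regions from shrinking too fast); for closed, non-clopen regions the fixed-level part of the closedness argument is no longer automatic, so the finite nets must be organised coherently rather than by the naive union above. Balancing these competing demands is the delicate core of the construction.
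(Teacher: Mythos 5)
Your zero-dimensional construction is correct and genuinely different from the paper's. You scatter finite $\frac1{n+1}$-nets $F_C\subset E_C$ over a shrinking clopen partition of $X$, and your Key Lemma ($\Gamma(x)\cap E=\td E(x)$) is sound and does the work you assign to it. The paper instead fixes a countable basis $(W_n)$ of the compactification $Z$ of $E$ with $\diam(W_n)\to0$, picks a \emph{single} point $e_n\in W_n\cap E$, and activates it on the \emph{single} closed set $B_n=\{x:\ d_0(x,A_n)\le2^{-n}\}$, where $A_n=\{x:\ \td E(x)\cap W_n\ne\0\}$; the metric estimate $d\bigl((x,e_n),\td E\bigr)\le2^{-n}+\diam(W_n)$ plays the role of your Key Lemma. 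The decisive structural difference is that each level $n$ of the paper's construction contributes exactly one closed rectangle $B_n\times\{e_n\}$, so the fixed-level part of the closedness argument is trivial for \emph{arbitrary} separable metrizable $X$; zero-dimensionality enters only at the very end, when $d_0$ is replaced by an ultrametric so that the $B_n$ become clopen and upper semicontinuity upgrades to continuity (the $\Fs$ description $\{x:\adh{\hat E(x)}\cap W_n\ne\0\}=\bigcup\{B_p: e_p\in W_n\}$ gives first Baire class in general).

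That difference is exactly where your proposal has a genuine gap: clause (b) and the first-Baire-class half of (c) are not established for general $X$, and you say so yourself. With closed but non-open activation regions, a fixed-level union $\bigcup_{C}C\times F_C$ over infinitely many pieces need not be closed --- infinitely many distinct pieces, each carrying a different finite net, can accumulate at a point $x$ --- and no coherent organisation of the nets is supplied. The proposed repair (clopenize a countable basis to get a finer zero-dimensional topology $\tau'$ and transport back) does not close the gap: the resulting $\hat E$ is closed in $(X,\tau')\times E$ but not necessarily in $(X,\tau)\times E$, so clause (b) fails as stated, even though the transported map would still be of the first Baire class since $\tau'$-open sets are $\tau$-$\Fs$. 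The missing idea is precisely the paper's change of indexing: enumerate a vanishing-diameter basis of the compactification of $E$ rather than a partition of $X$, so that each level is a single closed rectangle and the high-level contributions are forced into $\td E$ by a diameter estimate.
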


\begin{proof}  Let $Z$ be a compactification of $E$ defining the admissible topology of $\FF(E)$. We fix   two compatible  metrics  $d_0$ on $X$ and $d_1$ on $Z$  and equip $X\times Z$ with the sum distance $d$:
$$
d\bigl( (x,z)\,, \, (x',z')\bigr)=d_0(x,x') + d_1(z,z').$$
We  fix a countable basis $(W_n)_{n\in\wo}$ of   the topology of $Z$
such that for all $n$, $W_n$ is nonempty and for all $\e>0$ the set $\{n\in \wo : \diam (W_n)>\e\}$  is finite. 
We then fix for all $n$ an element $e_n \in  W_n\cap E$, and  set $A_n=\{ x\in X: \ \td E(x)\cap W_n\not=\0\}$ and  $B_n=\{ x\in X: \ d_0(x, A_n)\leq  2^{-n} \}$.  Finally let:
$$
D=\bigcup_{n\in \wo} B_n\times \{e_n\} \quad \text{ and } \quad \hat E= \td E \cup D
$$ 
 So clause  a) is trivially  satisfied and for every $x\in X$, $D(x)= \{ e_n : B_n\ni x\}$ is countable.

 \medskip
 
  We  first prove  that $\hat E$ is   closed in  $X\times E$.   For this observe   that if $x\in B_n$ then by definition there exists some $x'\in A_n$ such that  $d_0(x, x')\leq  2^{-n}$, and since $x'\in A_n$ then  there exists some $z \in W_n$ such that $(x',z)\in \td E$   hence since $e_n\in W_n$
 we have
$$d\bigl( (x,e_n)\, , \, \td E\bigr) \leq  d\bigl( (x,e_n)\, ,\, (x',z)\bigr)= d_0(x, x')+ d_1(e_n, z) \leq  2^{-n} + \diam (W_n)$$
It follows that  if  a sequence $(x_j, e_{n_j})_j$ in $D$ with $x_j\in B_{n_j}$ converges to some element $(x,z)$ then:

-- either for infinitely many $j$'s,  $n_j=n$ is constant  hence $e_{n_j}=e_n$ and  $x_j\in B_n$, then $z=e_n$ and since $B_n$ is closed then $x\in B_n$; so
 $(x,z)\in D\subset \hat E$,

-- or $\lim_j n_j=\infty$   and in this case the inequality above shows that  $(x,z)\in \td E$.
 
 \sk
 
 So any accumulation point of $D$ lies in $\td E$, hence
  $\adh{D}^{X\times E}\subset D\cup \td E$ ;  and since  $\td E$ is a closed subset of $X\times E$ 
  then  $\hat E$ is a closed subset of $X\times E$.
 In the  same way   if  $F= \adh{E}^{X\times Z}$ then  
$\adh{D}^{X\times F} \subset \hat F:= \td F \cup D$ and $\hat F$ is a closed  subset of $X\times F$.


\smallskip

\begin{claim}: For all $x\in X$, $\hat F(x)=\adh{\hat E(x)}=\adh{D(x)}$.\end{claim}

\begin{proof} Since $\hat F$ is closed then for all $x\in X$, $\hat F(x)\supset\adh{\hat E(x)}$;
 and we  shall prove that if $W_n\cap  \hat F(x)\not=\0$ then $W_n\cap  D(x)\not=\0$,
 which proves that $\hat F(x)\subset\adh{D(x)}$. 

 If $W_n\cap D(x)\not=\0$ then we are done. Otherwise $W_n\cap  \td F(x)\not=\0$ and  we can pick some
$z\in W_n$ such that $(x,z)\in \td F=\adh{\td E}$. Then there exists some $(x',z')\in \td E$   such that $d_0(x,x')<2^{-n}$ and
$z'\in W_n$ hence $\td E(x')\cap W_n\not=\0$ and so $x'\in A_n$ and $x\in B_n$ and then $(x,e_n)\in \hat E$ so $e_n\in W_n\cap  D(x)$.
\end{proof}
 
  So clause b) is satisfied and we now  prove  clause c). 
 Since  $\FF(E)$ is endowed with the  admissible topology associated to the compact space $Z\supset E$
then by definition, the study of the mapping $\Phi_{\hat E}: X \to \FF(E)$ reduces  to the study of the mapping $\Psi: X\to \FF(Z)$ defined by 
$$\Psi (x)=  \adh{\Phi_{\hat E} (x)}=  \adh{\hat E(x)}=\hat F(x)$$
Since $\hat F$ is closed in $X\times Z$ and $Z$ is compact then for all open set $W$ in $Z$ 
the set $\{x\in X : \ \hat E(x) \subset W\}$ is open. Moreover it follows from the Claim that  for all $n$:
$$\{x\in X : \ \hat F(x) \cap W_n\not =\0\}=\bigcup\{ B_p:\  e_p\in W_n\}$$
which is an $\boldsymbol {F}_\s$ set, hence $\Psi$ is of first Baire class. 

Finally if $X$ is zero-dimensional we can choose for $d_0$   a  metric valued in $\{0\}\cup\{2^{-n} : \ n\geq 0\}$, for example by embedding $X$ in $2^\wo$ equipped with its standard ultrametric defined for $\a\not=\b$ by
$d_0(\a, \b)= 2^{-n(\a,\b)}$ with    $n(\a,\b)=\min\{ n: \ \a(n)\not=\b(n)\}$.
It follows  that the sets $B_n$  above are clopen  and $\Psi$ is then continuous. 
 \end{proof}

\begin{corollary}\label{E0} 
Let $X$ be a Polish zero-dimensional space, $Y$  a separable metrizable  space and   $E\subset X\times Y$. Then    the set $E_0=\{x\in X: \ \dim(E(x))=0\}$ is reducible to  $\FF_0(E)$.
\end{corollary}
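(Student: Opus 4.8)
The plan is to take as the reduction the canonical mapping $\varphi=\Phi_{\hat E}\colon X\to\FF(E)$ furnished by Theorem~\ref{hatE}, applied to the set $E$ together with the given admissible topology on $\FF(E)$. Since $X$ is Polish and zero-dimensional, clause~(c) of that theorem guarantees that $\varphi$ is \emph{continuous}. Recalling that $\Phi_{\hat E}(x)=\hat E(x)$ and that $\hat E(x)\in\FF_0(E)$ exactly when $\dim(\hat E(x))=0$, it therefore suffices to establish the pointwise equivalence
\[
\dim\bigl(E(x)\bigr)=0 \iff \dim\bigl(\hat E(x)\bigr)=0 ,
\]
for then $E_0=\varphi^{-1}(\FF_0(E))$ exhibits $(E_0,X)$ as reducible to $(\FF_0(E),\FF(E))$, which is precisely the claim.

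The core of the argument is this dimension computation, and it rests on the explicit shape of $\hat E$ built in the proof of Theorem~\ref{hatE}, namely $\hat E=\td E\cup D$. First I would note that for each $x$ the section splits as $\hat E(x)=\td E(x)\cup D(x)$, where $\td E(x)=\{x\}\times E(x)$ is homeomorphic to $E(x)$ (hence of the same dimension) and $D(x)=\{e_n:x\in B_n\}$ is countable. For the implication $\dim(\hat E(x))=0\Rightarrow\dim(E(x))=0$ I would invoke monotonicity of topological dimension for subspaces of a separable metrizable space: $\td E(x)$ is a subspace of $\hat E(x)$, so $\dim(E(x))=\dim(\td E(x))\le\dim(\hat E(x))$. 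For the converse I would use the countable closed sum theorem (Theorem~\ref{dim0}): since $\td E$ is closed in $X\times E$, the set $\td E(x)$ is closed in $\hat E(x)$, while each point of the countable set $D(x)$ is closed in the metrizable space $\hat E(x)$; thus $\hat E(x)$ is a countable union of closed subsets each of dimension $\le 0$, whence $\dim(\hat E(x))\le 0$ whenever $\dim(E(x))=0$. Together with the monotonicity inequality this pins $\dim(\hat E(x))$ to $0$, giving the equivalence.

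The step I expect to require the most care is not the topological reduction but the bookkeeping around degenerate sections and dimension conventions. The delicate point is that $D(x)$ may be nonempty even when $E(x)=\emptyset$ (a point $x$ can lie in some $B_n$ while lying in no $A_n$), so one must be consistent in treating the empty set and countable sets as zero-dimensional. Under the standing convention that a countable space is zero-dimensional and that both $E_0$ and $\FF_0(E)$ are read through $\dim\le 0$, the equivalence goes through uniformly: adjoining the countable closed set $D(x)$ cannot raise the dimension above $0$ when $E(x)$ is zero-dimensional, and cannot lower it below the dimension of the embedded copy $\td E(x)$ of $E(x)$. Once this is settled, the continuity of $\Phi_{\hat E}$ supplied by Theorem~\ref{hatE} completes the proof.
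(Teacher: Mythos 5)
Your proposal is correct and takes essentially the same route as the paper's proof: apply Theorem~\ref{hatE}, use clause~(b) together with the countable closed sum theorem (Theorem~\ref{dim}) to obtain the sectionwise equivalence $\dim(E(x))=0\iff\dim(\td E(x))=0\iff\dim(\hat E(x))=0$, and conclude via the continuity of $\Phi_{\hat E}$ from clause~(c). Your explicit treatment of the degenerate sections (reading $\FF_0$ and $E_0$ through $\dim\le 0$, so that the empty set and countable sets count as zero-dimensional) fills in a convention the paper leaves implicit but does not alter the argument.
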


\begin{proof} Let ${\hat E}\subset X\times E$ be the set given by Theorem \ref{hatE}. Observe that since $\td E$ is closed in $\hat E$
then it follows from clause b) and Theorem \ref{dim} that   
$$x\in E_0\iff   \dim(E(x))=0 \iff   \dim(\td E(x))=0\iff   \dim(\hat E(x))=0$$
 then by clause c)  the mapping $\Phi_{\hat E}$ realizes a reduction of  $E_0$ to $\FF_0(E)$ with all the desired 
 properties. 
\end{proof}

\end{section}
\begin{section}{\bf The $\s$-ideal structure of  $\FF_0(X)$} \label{s-ideal}

\begin{definition} \label{def-s-ideal}Given  a set $X$ and $\II\subset\A\subset\mathcal P(X)$ we shall say that 
$\II$ is  a $\s$-ideal of $\A$ if for any set $A\in \A $,  if 
$A\subset \bigcup_{n \geq 0}A_n$ with $A_n\in \II$ then $A \in \II$.
\end{definition}

So by Theorem \ref{dim0}  for any space $X$ the set  $\FF_0(X)$ is a  $\s$-ideal of $\FF(X)$.  

\medskip

 {For any space $X$ let $\KK(X)$ denote the space of all compact subsets of $X$ equipped with  the Vietoris topology.} Observe that if $X$ is a compact space,  a  $\s$-ideal  of $\KK(X)$ in the sense of Definition \ref{def-s-ideal} is    a  ``{\it \hbox{$\s$-ideal}  of compact sets}"  in the sense of \cite{klw}. We also  recall that  in this case ($X$compact) if $\mathcal C\subset \KK(X)$ is compact then the set $C=\bigcup \mathcal C\subset X$ is compact.

 More generally given a  set $\A\subset \FF(X)$ (endowed with some admissible topology) we shall say that $\A$ is {\it  closed under compact unions} if for any  compact  set  $\mathcal C\subset \A$ the  set  $C=\bigcup \mathcal C$ is a member of $\A$ (in particular  $C\in\FF(X)$).  

\begin{lemma}\label{compu}  For any admissible topology  $\FF(X)$ is closed under compact unions. 
\end{lemma}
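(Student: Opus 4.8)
The plan is to reduce the statement to the fact recalled just above the lemma: in a \emph{compact} space, a compact family of compact sets has compact union. First I would fix a compactification $\hat X$ of $X$ defining the given admissible topology $\td\tau_{\hat X}$. By the very definition of this topology, the map $\iota:\FF(X)\to\FF(\hat X)$ sending $F$ to its closure $\adh F$ in $\hat X$ is a homeomorphism of $(\FF(X),\td\tau_{\hat X})$ onto the subspace $\FF(X,\hat X)$ of the Vietoris hyperspace $\FF(\hat X)$. Since $\hat X$ is compact we have $\FF(\hat X)=\KK(\hat X)$, so each $\adh F$ is compact.

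Next, given a compact set $\mathcal C\subset\FF(X)$, its image $\hat{\mathcal C}:=\iota(\mathcal C)=\{\adh F:F\in\mathcal C\}$ is a compact subset of $\KK(\hat X)$. Applying the recalled fact, the union $\hat C:=\bigcup\hat{\mathcal C}=\bigcup_{F\in\mathcal C}\adh F$ is then a compact, hence closed, subset of $\hat X$.

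Finally I would identify $C:=\bigcup\mathcal C$ with $\hat C\cap X$. Each $F\in\mathcal C$ is closed in $X$, so $\adh F\cap X$ is the closure of $F$ in the subspace $X$, namely $F$ itself; distributing the intersection over the union gives $\hat C\cap X=\bigcup_{F\in\mathcal C}(\adh F\cap X)=\bigcup_{F\in\mathcal C}F=C$. Hence $C$ is the trace on $X$ of a closed subset of $\hat X$, so it is closed in $X$, i.e. $C\in\FF(X)$, which is exactly the assertion that $\FF(X)$ is closed under compact unions.

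The one step needing care is this last identification: one must invoke that the members of $\mathcal C$ are closed in $X$ (to obtain $\adh F\cap X=F$) and use distributivity of intersection over unions, rather than expecting $\hat C$ to coincide with the closure of $C$ in $\hat X$, which can be strictly larger. All the remaining work is a transparent transfer, through the homeomorphism $\iota$, of the compactness statement already available in the compact hyperspace $\KK(\hat X)$.
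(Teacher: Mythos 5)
Your proof is correct and follows essentially the same route as the paper: pass to the compactification $\hat X$, identify $\FF(X)$ with $\FF(X,\hat X)\subset\KK(\hat X)$, invoke the compactness of the union of a compact family of compact sets, and take the trace on $X$. The paper phrases the conclusion as $\adh{C\cap X}=C$ (so the union stays in $\FF(X,\hat X)$) while you observe directly that $\hat C\cap X=\bigcup_{F\in\mathcal C}F$ is closed in $X$, but these are the same identification read in the two directions.
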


\begin{proof}
Let $\hat X$ be a compactification   of $X$ defining  the given
admissible topology  on $\FF(X)$. We recall that $\FF(X)$ is  then 
homeomorphic  to the subset   
$\FF(X, \hat X)$ of  $\KK(\hat X)$ (see Section \ref{F(X)}). We now check that   
$\FF(X, \hat X)$  is  closed under compact unions:
 Suppose that $\mathcal C\subset \FF(X, \hat X)$ is    compact  and let $C=\bigcup \mathcal C$ then 
$$\adh{C\cap X}= \adh{\bigcup_{K\in\mathcal C} K\cap X}\supset\bigcup_{K\in\mathcal C} \adh{K\cap X}=
\bigcup_{K\in\mathcal C} K=C$$ 
So  $\adh{C\cap X}=C$ and  thus $C\in\FF(X, \hat X)$. 
 \end{proof} 

It turns out that most results of \cite{klw} concerning $\s$-ideals  of compact sets can be extended, with the same proofs, to   $\s$-ideals   of any   $\ana$  set $\A\subset \mathcal K(X)$    closed under compact unions. In particular  Theorem 7 and Theorem 11 of  \cite{klw} can be restated as follows:
 
 \begin{theorem} \label{KLW} {\sc (Kechris-Louveau-Woodin)}  Let $X$ be compact space and   $\II$  be a $\s$-ideal of $\A\subset\mathcal K(X)$.
If   $\A$ is $\ana$ and   is  closed under compact unions then:

a) If $\II$ is $\ana$ then $\II$ is a $\Gd$ subset of $\A$

b)  If $\II$ is $\ca$ then  $\II$ is either  a $\Gd$ subset of $\A$ or is $\ca$-complete. 
 \end{theorem}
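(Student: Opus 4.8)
The plan is to revisit the proofs of Theorems 7 and 11 of \cite{klw} and to check that they rest on only two features of the ambient hyperspace $\KK(X)$, both of which survive in our more general setting. The first is that $\KK(X)$ is a Polish space, so that the Hurewicz $\Gd$-dichotomy and its coanalytic refinement apply to the analytic, resp. coanalytic, set $\II$; here this is replaced by the hypothesis that $\A$ is $\ana$, which lets us regard $\II$ as a subset of the Polish space $\KK(X)$ of the same absolute complexity and run the dichotomies there. The second, genuinely ideal-theoretic, feature is that the union of a compact family of compact sets is again a compact member of the ambient family (the closure-under-compact-unions property, established for $\FF(X)$ in Lemma \ref{compu}); this is replaced, word for word, by the hypothesis that $\A$ is closed under compact unions. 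Since $\A=\KK(X)$ is never used for anything else, I expect the two theorems to go through verbatim.

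For part a), I would suppose $\II$ is $\ana$ but, towards a contradiction, not a $\Gd$ subset of $\A$. Viewing $\II$ as an analytic subset of $\KK(X)$ which is not $\Gd$, the Hurewicz dichotomy produces a Cantor set $C$ and a countable dense $D\subseteq C$ with $D\subseteq\II$ and $(C\setminus D)\cap\II=\0$: the elements of $D$ are countably many members of $\II$, while the comeager remainder $C\setminus D$ consists of compact sets avoiding $\II$. The goal is then to combine these $\II$-members so as to cover some $K^*\in C\setminus D$: a fusion refinement of the Cantor scheme should yield countably many $A_n\in\II$ with $K^*\subseteq\bigcup_n A_n$. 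Closure under compact unions keeps all the compact sets built during the fusion, and in particular the limit element $K^*$, inside $\A$; hence the $\s$-ideal property of $\II$ in $\A$ applies to the covering $K^*\subseteq\bigcup_n A_n$ and forces $K^*\in\II$, contradicting $K^*\in C\setminus D$.

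For part b), assume $\II$ is $\ca$. Here I would run the coanalytic machinery of \cite{klw}: a $\ca$-rank (equivalently, a derivative on $\A$ canonically attached to the $\s$-ideal $\II$) together with the boundedness theorem yields the dichotomy. If the rank is bounded then $\II$ is Borel, and the argument of part a) upgrades this to ``$\Gd$ in $\A$''; if the rank is unbounded one reduces the $\ca$-complete set $\WF$ of well-founded trees to $\II$ by mapping a tree to a compact set built, level by level, from members of $\II$, so that well-foundedness corresponds to membership in $\II$. This exhibits $\II$ as $\ca$-complete. As before, the only points to verify are that the derivative and the reducing family stay within $\A$, which is exactly what closure under compact unions provides.

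The main obstacle is the fusion step common to both parts: passing from the abstract Cantor set (resp. the unbounded rank) handed down by the dichotomy to an honest covering of a single compact set by countably many members of $\II$, carried out so that every compact set produced remains inside the merely analytic family $\A$. It is precisely here that the two hypotheses do their work --- analyticity of $\A$ to license the dichotomies, and closure under compact unions to keep all constructed unions, together with their limit elements, inside $\A$ --- and checking that the constructions of \cite{klw} never leave $\A$ is the whole content of the generalization.
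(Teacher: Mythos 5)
Your plan coincides with what the paper actually does: Theorem \ref{KLW} is stated without proof, the authors simply observing that Theorems 7 and 11 of \cite{klw} ``can be extended, with the same proofs,'' the only two ingredients being that $\A$ is $\ana$ (so the Hurewicz-type dichotomies still apply) and that $\A$ is closed under compact unions (so the $\s$-ideal property can be invoked on the compact sets produced in the constructions). You have correctly identified exactly these two points as the whole content of the generalization, so your proposal matches the paper's approach.
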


\begin{corollary} \label{KLWI}  
  Let   $X$ be  an analytic space and   $\II$  be a $\s$-ideal of $\mathcal F(X)$.  

a) If $\II$ is $\ana$  for some admissible topology  then for any admissible topology  $\II$ is a $\Gd$ subset of $\FF(X)$ . 

b)  If $\II$ is $\ca$  for some admissible topology  then  for any admissible topology  $\II$ is either  a 
$\Gd$ subset of $\FF(X)$ or  $\ca$-complete.  
 \end{corollary}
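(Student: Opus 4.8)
The plan is to reduce to the compact case already covered by Theorem \ref{KLW}, passing through an arbitrary compactification of $X$. Fix an admissible topology on $\FF(X)$, namely the topology $\td\tau_{\hat X}$ attached to a compactification $\hat X$ of $X$. As recalled in Section \ref{F(X)} and used in the proof of Lemma \ref{compu}, the map $F\mapsto \adh{F}^{\hat X}$ is a homeomorphism of $(\FF(X),\td\tau_{\hat X})$ onto $\A:=\FF(X,\hat X)\subset \KK(\hat X)$. Since $X$ is analytic, $\FF(X)$ and hence $\A$ is $\ana$ by \ref{F(X)}.a), and by Lemma \ref{compu} the set $\A$ is closed under compact unions. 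Thus the compact space $\hat X$ and the family $\A$ fulfil exactly the hypotheses of Theorem \ref{KLW}, and it only remains to transport $\II$ together with its $\s$-ideal structure to this setting.

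First I would verify that the image $\II'\subset\A$ of $\II$ under the above homeomorphism is a $\s$-ideal of $\A$ in the sense of Definition \ref{def-s-ideal}, the inclusions and unions now being computed in $\hat X$. This is routine bookkeeping: if $K\in\A$ satisfies $K\subset\bigcup_{n}K_n$ in $\hat X$ with each $K_n\in\II'$, then setting $F=K\cap X$ and $F_n=K_n\cap X$ one has $F\subset\bigcup_n F_n$; here $F\in\FF(X)$ because $K\in\FF(X,\hat X)$, and each $F_n\in\II$ as it is exactly the element of $\FF(X)$ corresponding to $K_n\in\II'$. Since $\II$ is a $\s$-ideal of $\FF(X)$ this yields $F\in\II$, i.e. $K\in\II'$.

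Next I would deal with the passage from ``some'' to ``any'' admissible topology. The membership classes $\ana$ and $\ca$ are invariant under first Baire class isomorphisms, since the preimage of an $\ana$ (resp. $\ca$) set under a Borel---in particular first Baire class---map is again $\ana$ (resp. $\ca$); as any two admissible topologies on $\FF(X)$ are first Baire class isomorphic (Section \ref{F(X)}), the hypothesis that $\II$ be $\ana$ (resp. $\ca$) for \emph{some} admissible topology already forces it for \emph{every} one. In particular $\II'$ is $\ana$ (resp. $\ca$) as a subset of $\A$, so Theorem \ref{KLW} applies and gives, in case a), that $\II'$ is $\Gd$ in $\A$, and in case b), that $\II'$ is either $\Gd$ in $\A$ or $\ca$-complete. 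Transporting back along the homeomorphism---which preserves both the class $\Gd$ and $\ca$-completeness for continuous reductions---yields the corresponding conclusion for $\II$ in the topology $\td\tau_{\hat X}$, and since $\hat X$ was an arbitrary compactification this holds for every admissible topology.

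The one point that must be handled with care, rather than by a blanket transfer, is that $\Gd=\borm 2$ is \emph{not} invariant under first Baire class isomorphisms; this is precisely why the $\Gd$ conclusion cannot be moved from one admissible topology to another by the isomorphism and must instead be regenerated for each topology by reapplying Theorem \ref{KLW} to its own compactification $\hat X$. Everything else being mechanical transport through the homeomorphism $(\FF(X),\td\tau_{\hat X})\cong\A$, I expect this quantifier management to be the only genuinely delicate step.
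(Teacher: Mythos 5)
Your proposal is correct and follows essentially the same route as the paper: transfer the $\ana$ (resp.\ $\ca$) hypothesis to every admissible topology via the first Baire class isomorphism, identify $(\FF(X),\td\tau_{\hat X})$ with the analytic family $\FF(X,\hat X)\subset\KK(\hat X)$ closed under compact unions (Lemma \ref{compu}), and apply Theorem \ref{KLW} to each compactification separately. The paper's proof is a two-line version of exactly this argument; your explicit verification of the $\s$-ideal transfer and your remark that the $\Gd$ conclusion must be regenerated per topology rather than transported are details the paper leaves implicit, and they are handled correctly.
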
 

\begin{proof} As mentioned above any two  admissible topologies on $\FF(X)$ are first Baire class isomorphic, hence if $\II$ is $\ana$ for some admissible topology  then $\II$ is $\ana$ for any admissible topology. Corollary \ref{KLWI} follows then from    Lemma \ref{compu} and  Theorem~\ref{KLW}.
\end{proof}

   We recall that $\FF(X)$ is implicitly endowed with some admissible topology.
 
 \begin{corollary} \label{KLWF(X)}  
  Let   $X$ be  an analytic space.  

a) If $\FF_0(X)$ is $\ana$  then  $\FF_0(X)$ is a $\Gd$ subset of $\FF(X)$ . 

b)  If $\FF_0(X)$ is $\ca$   then  $\FF_0(X)$ is either  a 
$\Gd$ subset of $\FF(X)$ or  $\ca$-complete.  
 \end{corollary}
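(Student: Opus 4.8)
The plan is to obtain Corollary \ref{KLWF(X)} as the special case $\II=\FF_0(X)$ of the already-proved Corollary \ref{KLWI}. The only point requiring verification is that $\FF_0(X)$ is genuinely a $\s$-ideal of $\FF(X)$ in the sense of Definition \ref{def-s-ideal}; once that is in hand, both clauses follow by direct substitution, with no further argument.

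First I would record the $\s$-ideal property, which is the dimension-theoretic fact already noted after Theorem \ref{dim0}. Suppose $F\in\FF(X)$ and $F\subset\bigcup_{n\geq 0}F_n$ with each $F_n\in\FF_0(X)$. Then $F=\bigcup_{n\geq 0}(F\cap F_n)$, and each $F\cap F_n$ is a closed subset of $F$ which, being a subspace of the zero-dimensional space $F_n$, is itself zero-dimensional. Theorem \ref{dim0} then yields $\dim(F)=0$, i.e.\ $F\in\FF_0(X)$, so $\FF_0(X)$ is a $\s$-ideal of $\FF(X)$. At this point I would also invoke the remark of Section \ref{F(X)} that any two admissible topologies on $\FF(X)$ are first Baire class isomorphic, so that the hypotheses ``$\FF_0(X)$ is $\ana$'' and ``$\FF_0(X)$ is $\ca$'' do not depend on the particular admissible topology chosen; this is exactly what lets the ``for some''/``for any'' phrasing of Corollary \ref{KLWI} collapse to the implicitly fixed topology here.

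With these two observations, clause a) is precisely Corollary \ref{KLWI}.a applied to $\II=\FF_0(X)$, and clause b) is Corollary \ref{KLWI}.b applied to the same $\II$. I expect there to be essentially \emph{no} obstacle in this final step: the entire analytic substance---closure of $\FF(X)$ under compact unions (Lemma \ref{compu}) combined with the Kechris--Louveau--Woodin dichotomy (Theorem \ref{KLW})---has already been packaged into Corollary \ref{KLWI}. The sole input specific to $\FF_0$ is the preservation of zero-dimensionality under countable closed covers, i.e.\ Theorem \ref{dim0}, which is what makes $\FF_0(X)$ a $\s$-ideal in the first place.
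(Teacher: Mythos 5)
Your proposal is correct and matches the paper's intended argument: the paper records immediately after Definition \ref{def-s-ideal} that $\FF_0(X)$ is a $\s$-ideal of $\FF(X)$ by Theorem \ref{dim0}, and then obtains Corollary \ref{KLWF(X)} as the direct specialization of Corollary \ref{KLWI} to $\II=\FF_0(X)$, exactly as you do. Your explicit verification of the $\s$-ideal property and your remark on the topology-independence of the $\ana$/$\ca$ hypotheses are both accurate and fill in precisely the (routine) details the paper leaves implicit.
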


We emphasize  that  in  both statements    a) and b)  of Corollary \ref{KLWF(X)} the $\Gd$ condition is relative to $\FF(X)$ and  not absolute, unless the space $X$ itself, hence $\FF(X)$, is Polish.  In fact  if one wants to ensure that $\FF_0(X)$ is an (absolute) $\Gd$ then requiring $X$ to be Polish is not a restriction  since $X$  embeds as a closed subset of  $\FF_0(X)$, and in this context  we have the following complement to
Corollary \ref{KLWF(X)}. 

\begin{proposition} \label{Fs} If $X$ is  a Polish  space and $\FF_0(X)$  is an 
 $\Fs$   subset of  $\FF(X)$ then $X$ is zero-dimensional, hence $\FF_0(X)= \FF(X)$. 
 \end{proposition}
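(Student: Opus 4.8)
The plan is to combine the Kechris--Louveau--Woodin dichotomy already recorded in Corollary \ref{KLWF(X)} with a Baire category argument and the countable sum theorem for dimension (Theorem \ref{dim0}).

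First I would note that an $\Fs$ set is $\ana$, so Corollary \ref{KLWF(X)}\,a) applies and shows that $\FF_0(X)$ is in fact a $\Gd$ subset of $\FF(X)$. Since the finite subsets of $X$ are dense in $\FF(X)$ and are all zero-dimensional, $\FF_0(X)$ is a dense $\Gd$, hence comeager, in the Polish space $\FF(X)$. Being at the same time $\Fs$, say $\FF_0(X)=\bigcup_n C_n$ with each $C_n$ closed, it cannot be meager, so some $C_{n_0}$ has nonempty interior and we obtain a nonempty open set $\UU\subseteq\FF_0(X)$.

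The core step is a localization lemma: from a nonempty open $\UU\subseteq\FF_0(Y)$, for $Y$ Polish, I want to produce a nonempty open set $V\subseteq Y$ with $\dim V=0$. Fixing a metrizable compactification $\hat X$ defining the admissible topology, $\UU$ contains a basic neighborhood of the form $\{F:\ \adh F\subseteq\hat V_0\ \text{and}\ F\cap V_i\neq\0,\ i\le k\}$, where $\adh F$ denotes the closure of $F$ in $\hat X$ and $V_i=\hat V_i\cap Y$. Choosing a point of $\hat V_0\cap Y$ together with a small closed $\hat X$-ball around it inside $\hat V_0$, I get a nonempty open $V\subseteq Y$ such that every closed-in-$Y$ set $F\subseteq V$ has $\adh F\subseteq\hat V_0$. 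For such an $F$, adjoining one point of each $V_i$ (which lie in $V_0$ because $\UU\neq\0$) leaves it in $\UU$, so $F$ is zero-dimensional. As $V$ is an increasing union of closed-in-$Y$ subsets contained in $V$, Theorem \ref{dim0} gives $\dim V=0$. I expect the genuine technical point here to be exactly this passage: the ``upper'' part of a basic neighborhood in an admissible topology constrains the closure in $\hat X$, not merely in $Y$, and the shrinking-to-a-ball maneuver is what converts a neighborhood in $\FF(Y)$ into an honest open subset of $Y$.

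Finally I would propagate this to all of $X$. Let $W$ be the union of all zero-dimensional open subsets of $X$; by separability (Lindel\"of) and Theorem \ref{dim0} it is itself a zero-dimensional open set, the largest one. Suppose $W\neq X$ and set $B=X\setminus W$, a nonempty closed, hence Polish, subspace. Since $\FF(B)$ is the closed subspace $\{F\in\FF(X):F\subseteq B\}$ and $\FF_0(B)=\FF_0(X)\cap\FF(B)$ is still $\Fs$, the first two steps apply verbatim to $B$ and yield a nonempty relatively open $V'\subseteq B$ with $\dim V'=0$. Writing $V'=G\cap B$ with $G$ open in $X$, we have $G=(G\cap W)\cup V'$: the first piece is open in $G$ and zero-dimensional (a subset of $W$), hence a countable union of closed-in-$G$ zero-dimensional sets, while $V'=G\cap B$ is closed in $G$ and zero-dimensional; Theorem \ref{dim0} then gives $\dim G=0$. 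Thus $G$ is a zero-dimensional open subset of $X$, so $G\subseteq W$ and $V'=G\cap B\subseteq W\cap B=\0$, contradicting $V'\neq\0$. Therefore $W=X$, i.e. $\dim X=0$, and then every closed subset of $X$ is zero-dimensional, so $\FF_0(X)=\FF(X)$. I regard this propagation argument --- upgrading one local zero-dimensional open set to global zero-dimensionality --- as the conceptual heart of the proof, the category and dichotomy inputs being essentially routine once Corollary \ref{KLWF(X)} is invoked.
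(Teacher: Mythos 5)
Your proof is correct and follows essentially the same route as the paper's: Corollary \ref{KLWF(X)} upgrades $\Fs$ to dense $\Gd$, Baire category produces a nonempty open subset of $\FF_0$, a basic neighborhood in the admissible topology is unpacked into a nonempty zero-dimensional (relatively) open subset of the space, and this contradicts the maximality of the largest zero-dimensional open set. The one point you gloss over is that your nonempty open $\UU$ could a priori be the singleton $\{\0\}$ (the empty set is an isolated point of $\FF(Y)$), in which case $\hat V_0\cap Y$ may be empty and there is no point to choose; the paper handles this explicitly by observing that $\FF_0(Y)\setminus\{\0\}$ still has nonempty interior, which your Baire argument also yields if you localize it to the open set $\FF(Y)\setminus\{\0\}$.
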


\begin{proof} Embed $\FF(X)$ in $\FF(\hat X)$  for some compactification  $\hat X$  of $X$ and let 
$\td\tau$ denote  the corresponding admissible topology on $\FF(X)$.   Let $U$ be the largest open subset of $X$ such that $\dim(U)=0$ (such a set  exists by Theorem \ref{dim0}) and suppose that the set  $Y=X\setminus U$ is nonempty.  

 Since $X$ is Polish then $Y$ as well as $\FF(Y)$ are  Polish too.    It follows from the hypothesis that  $\FF_0(Y)= \FF_0(X)\cap  \FF(Y )$ is an $\Fs$  subset  of $\FF(Y)$, hence by  Corollary \ref{KLWI} $\FF_0(Y)$ is a  also a $\Gd$   subset of   $ \FF(Y)$ which is  moreover dense since it contains all finite subsets. 
Hence   by Baire Theorem  $\FF_0(Y)$ has  a dense  interior in {$\FF(Y)$}. In particular  since $\0$ is an isolated element of $\FF(Y)$ the set 
$\FF_0^*(Y)=\FF_0(Y)\setminus \{\0\}$  is   of nonempty interior in  {$\FF(Y)$}. It follows then from the very definition of the topology $\td\tau$ that  there exists some finite family  $(V_j)_{0\leq j\leq k}$ of open subsets of $\hat X$  such that setting $V=\bigcup_{j=0}^k V_j$  then $V\cap Y\not=\0$ and
 $$ \mathcal V =\{ K\cap Y : K\in \KK( \hat Y)\ ,\ K \subset V \text{ and } \forall j\leq k, \  K\cap V_j\neq\0\}
$$
 is a nonempty subset  of $\FF_0^*(Y)$. We then fix for each $j$ an element $a_j$ in $Y\cap V_j$ (which by 
the definition of  $ \mathcal V $ is   nonempty), set $F=\{ a_j : 0²j²p\}$ and write  $V=\bigcup_nW_n$ with 
${W_0}\supset F$ and for all $n$, $\adh{W_n}\subset V$. It follows   that  for each $n$, 
 $\adh{W_n}\cap Y\in \FF_0^*(Y)$  so $\dim(\adh{W_n}\cap Y)=0$ hence by Theorem \ref{dim} 
 $\dim(V\cap Y)=0$.  Then    $U'= U\cup (V\cap Y)=U\cup V$  is an open subset of $X$ 
 and since   $\dim(U)=0$ then again  by Theorem \ref{dim}  $\dim(U')=0$. It follows then from  the definition of $U$  that  $V\cap Y=\0$ which is a contradiction. Hence $Y=\0$ and  $X=U$ so $\dim(X)=0$.
\end{proof}

\end{section}


\begin{section}{When $\FF_0(X)$ is  of low complexity}\label{low}

 It follows from the previous results that at least   in the frame of Polish spaces, aside in the trivial case of a zero-dimensional space,  in which case $\FF_0(X)=\FF(X)$,  the first two possible  descriptive complexity classes  for the set $\FF_0(X)$ are the class  $\Gd$ and the class $\ca$. In this  section we   show that both of these classes can really occur.  One of the main goals of this work is to prove the existence of other  possible classes.
  
 We first mention the following elementary proposition concerning the class $\Gd$. We point out that in Section \ref{Hypergraphs}  we shall give
an example of a Polish space $X$ which does not satisfy the assumptions of this proposition but  for which the set   $\FF_0(X)$  is   $\Gd$.
 
\begin{proposition} \label{F0compact} If  a space $X$ is decomposable into  a countable union $X=\bigcup_n F_n$  where  each $F_n$ is either compact or a closed zero-dimensional subset of $X$  then $\FF_0(X)$ is  a  $\Gd$ subset of $\FF(X)$. 
\end{proposition}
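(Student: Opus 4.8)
The plan is to reduce the statement to the compact case, which I handle by a direct ``small clopen partition'' description of zero-dimensionality inside the hyperspace.

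First I would record the key decomposition. Write $X=\bigcup_n F_n$ and split the index set as $A=\{n:F_n\text{ compact}\}$ and $B=\{n:F_n\text{ closed zero-dimensional}\}$. I claim that for every $F\in\FF(X)$,
$$F\in\FF_0(X)\iff \forall\,n\in A,\ \dim(F\cap F_n)=0.$$
The forward implication is just monotonicity of dimension: a subspace of a zero-dimensional separable metric space is zero-dimensional, so each $F\cap F_n$ is. For the converse, $F=\bigcup_n(F\cap F_n)$ is a countable union of sets closed in $F$; for $n\in A$ the piece $F\cap F_n$ is zero-dimensional by hypothesis, and for $n\in B$ it is a subspace of the zero-dimensional $F_n$, hence zero-dimensional. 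Theorem \ref{dim0} then yields $\dim F=0$. (If $A=\emptyset$ the same computation shows that $X$ itself is zero-dimensional, so $\FF_0(X)=\FF(X)$, which is trivially $\Gd$.)

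Second, fix a compatible metric on $X$, and for each $n\in A$ and each integer $m\geq1$ set
$$U_{n,m}=\{F\in\FF(X): F\cap F_n\text{ can be partitioned into finitely many relatively clopen pieces of diameter}<1/m\}.$$
Since $F\cap F_n$ is a closed subset of the compact $F_n$, it is compact, and by the standard characterization of zero-dimensionality for compact metric spaces a compact set has dimension $0$ iff it admits such a finite partition for every $m$. Combined with the first step this gives
$$\FF_0(X)=\bigcap_{n\in A}\bigcap_{m\geq1}U_{n,m}.$$
As $A$ is countable, it then suffices to prove that each $U_{n,m}$ is open in $\FF(X)$, and the conclusion follows.

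Finally I would prove openness of $U_{n,m}$. Given $F\in U_{n,m}$ with partition $F\cap F_n=P_1\sqcup\cdots\sqcup P_k$, the $P_i$ are pairwise disjoint compact sets of diameter $<1/m$, so they have positive mutual distances and can be thickened to pairwise disjoint open sets $O_1,\dots,O_k$ in $X$, still of diameter $<1/m$, with $F\cap F_n\subset O:=\bigcup_i O_i$. The set $F_n\setminus O$ is compact, so by property (b) of Section \ref{F(X)} the set $\{F':F'\cap(F_n\setminus O)\neq\emptyset\}$ is closed; hence its complement
$$\mathcal W=\{F'\in\FF(X): F'\cap F_n\subset O\}$$
is an open neighborhood of $F$. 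For $F'\in\mathcal W$ the sets $F'\cap F_n\cap O_i$ partition $F'\cap F_n$ into pairwise disjoint relatively clopen pieces of diameter $<1/m$ (disjointness and relative openness from the $O_i$, relative closedness because they cover $F'\cap F_n$), so $F'\in U_{n,m}$. The empty-intersection case $F\cap F_n=\emptyset$ is subsumed by taking $O=\emptyset$. The only genuinely delicate point is precisely this openness argument: the map $F\mapsto F\cap F_n$ into $\KK(F_n)$ is not continuous, so rather than pulling back a $\Gd$ set I build the witnessing neighborhood by hand, the crucial ingredient being the compactness of $F_n\setminus O$, which turns the inclusion $F\cap F_n\subset O$ into an open condition via property (b); everything else is routine thickening and bookkeeping.
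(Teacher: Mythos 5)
Your proof is correct, but it takes a genuinely different route from the paper's at the decisive step. Both arguments start from the same reduction: by the countable sum theorem (Theorem \ref{dim0}), $F\in\FF_0(X)$ iff $F\cap F_n$ is zero-dimensional for every \emph{compact} $F_n$, the zero-dimensional pieces contributing nothing. The paper then shows that $\FF_0(K)$ is a $\Gd$ subset of $\FF(K)$ for compact $K$ and pulls this back along the maps $F\mapsto F\cap F_{n}$; since these maps are only Borel, this yields only that $\FF_0(X)$ is Borel, and the paper must invoke the Kechris--Louveau--Woodin dichotomy for $\s$-ideals (Corollary \ref{KLWF(X)}) to upgrade Borel to $\Gd$. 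You avoid that machinery entirely: encoding zero-dimensionality of the compact set $F\cap F_n$ by finite relatively clopen partitions into pieces of diameter $<1/m$, and proving each $U_{n,m}$ open by an explicit thickening argument -- whose key point, as you correctly isolate, is that compactness of $F_n\setminus O$ turns the inclusion $F'\cap F_n\subset O$ into an open condition via {\ref{F(X)}.b)} -- you get the $\Gd$ bound directly. Your version is therefore more elementary and self-contained, needing none of Section \ref{s-ideal}, while the paper's is shorter on the page because it recycles both the compact-case computation (reused in Proposition \ref{F0=S12}) and Corollary \ref{KLWF(X)}, which it needs elsewhere anyway. The only blemish is a convention slip: when $F\cap F_n=\0$ you should write $\dim(F\cap F_n)\leq 0$ rather than $=0$ (the paper's Theorem \ref{dim} uses $\dim\0=-1$); your allowance of the empty partition already handles this case, so nothing breaks.
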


\begin{proof} Fix a countable basis $(U_n)_{n\geq 0}$ of the topology of $X$ and   observe that if a space $E$ is zero-dimensional then any pair $(A,B)$ of disjoint closed subsets of $E$ can be separated by a clopen set.
 Hence for any $F\in \FF(X)$ we can write:
 \begin{align*}
  F\in \FF_0(X) 
& \iff  
   \left\{\begin{array}{l} 
  \forall m, n :\ \adh{U_m}\subset U_n , \   \exists \,  V  \text{ open}  :\\
 \adh{U_m}  \subset   V\subset  U_n \  \ \wedge  \ \ F  \cap \partial{V}=\0\\
 \end{array} \right. \\
   & \iff   \left\{\begin{array}{l} 
  \forall m, n :\ \adh{U_m}\subset U_n , \   \exists \,  V  \text{ open} :\\
 \adh{U_m}  \subset   V\subset  U_n \  \ \wedge  \ \ F   \subset  V \cup \adh{V}^c \\
 \end{array} \right. 
 \end{align*}

a)  We shall first treat the case where $X$ itself is compact.  In this case   the   
condition $F   \subset  V \cup \adh{V}^c $  is open, hence by the equivalence above $ \FF(X)$ is $\Gd$.

b) Coming back to the general case, suppose that for all $k$, $F_{2k}$ is compact and $F_{2k+1}$ is zero-dimensional. Then  by   Theorem \ref{dim}  for any $F\in \FF(X)$  we have:
\begin{align*}
F\in \FF_0(X) & \iff  \forall k, \   F\cap F_{2k} \in \FF_0(F_{2k}) 
 \end{align*} 
 Since   $F_{2k}$ is compact then by case a) $\FF_0(F_{2k})$ is $\Gd$ and it follows from  {\ref{F(X)}.b)} that  the mapping $F\mapsto F\cap F_{2k}$ is Borel from  $\FF(X)$ to $\FF(F_{2k})$. Hence $\FF_0(X)$ is Borel and then  by  Corollary \ref{KLWF(X)}  $\FF_0(X)$ is  a $\Gd$ subset of $\FF(X)$.
 \end{proof}

As pointed out in the introduction, E. Pol and R. Pol showed in \cite{pol} that 
the set $\FF_0(\PP\times \I)$ is $\ca$ and not $\ana$.  
We shall prove in the rest of this section a generalization of  this  result, with a different proof.

 \begin{proposition}\label{prop}   If $X$ is  a closed subset of $Z \times K$ where  $Z$ is zero-dimensional and $K$ is compact  then the following are equivalent:
 \begin{listeroman} 
  \item     $\dim(X) =0$.
 \item    For all $z\in Z$,  $\dim(X(z))=0$.                                                                                                                                                                                                                                                                                                                                                                                                                                                                                                                                                                                                                                                                                                                                                                                                                                                                                                                                                     \end{listeroman} 
In particular if $Z$ is analytic then $\FF_0(X)$ is $\ca$.
 \end{proposition}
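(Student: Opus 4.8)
The plan is to prove the equivalence of (i) and (ii) directly from dimension theory, and then to deduce the co-analyticity of $\FF_0(X)$ by applying that equivalence to each closed subset of $X$ separately.

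The implication (i) $\Rightarrow$ (ii) is immediate and uses none of the hypotheses on $Z$ and $K$: the section $X(z)$ is homeomorphic to the subspace $\td X(z)=\{z\}\times X(z)$ of $X$, and a subspace of a (separable metrizable) zero-dimensional space is again zero-dimensional. So I would concentrate on (ii) $\Rightarrow$ (i), which is where the hypotheses "$Z$ zero-dimensional'' and "$K$ compact'' actually enter. For this I would show that $X$ has a basis of clopen sets, i.e. $\operatorname{ind}(X)=0$, which for separable metrizable spaces is equivalent to $\dim(X)=0$. Fix $x_0=(z_0,y_0)\in X$ and $\e>0$; I must produce a clopen neighbourhood of $x_0$ in $X$ of small diameter. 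Since $X(z_0)$ is compact (closed in the compact $K$) and zero-dimensional, it splits into a finite clopen partition into pieces of diameter $<\e$; let $C$ be the piece containing $y_0$ and $D=X(z_0)\mns C$. As $C$ and $D$ are disjoint compacta I can separate them by disjoint open sets $U\supset C$ and $U'\supset D$ of $K$, and after shrinking $U$ I may assume $\diam(U)<2\e$.

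The crucial point is that the "bad'' set $P=X\cap\bigl(Z\times(K\mns(U\cup U'))\bigr)$ is closed in $Z\times K$ and, \emph{since $K$ is compact}, its projection $\pi(P)\subset Z$ is closed; moreover $\pi(P)$ misses $z_0$, because $X(z_0)\subset U\cup U'$. Hence $O=Z\mns\pi(P)$ is an open neighbourhood of $z_0$ on which every section satisfies $X(z)\subset U\cup U'$. Using that $Z$ is zero-dimensional I pick a clopen $N\subset Z$ with $z_0\in N\subset O$ and $\diam(N)<\e$. Then for $z\in N$ the disjointness of $U,U'$ forces $X\cap(N\times K)$ to be the disjoint union of the two relatively open sets $X\cap(N\times U)$ and $X\cap(N\times U')$; since $N$ is clopen in $Z$, the set $W=X\cap(N\times U)$ is therefore clopen in $X$, contains $x_0$, and has diameter $<3\e$. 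This yields the desired clopen basis, so $\dim(X)=0$.

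For the "in particular'' statement I would apply the equivalence to an arbitrary $F\in\FF(X)$: being closed in $X$, hence in $Z\times K$, the set $F$ meets the hypotheses, so $F\in\FF_0(X)\iff\forall z\in Z,\ \dim(F(z))=0\iff\forall z\in Z,\ F(z)\in\FF_0(K)$, where each $F(z)$ is a compact subset of $K$. Now $\FF_0(K)$ is $\Gd$ (hence Borel) in $\KK(K)$ by Proposition \ref{F0compact}, and after embedding $Z$ in a zero-dimensional compactification inside $2^\wo\times K$ and noting that for $z\in Z$ the section of $F$ coincides with the section of its closure, the section map $(F,z)\mapsto F(z)$ is Borel (a standard computation, cf.\ \ref{F(X)}). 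Thus $F\in\FF_0(X)\iff\neg\exists z\,[\,z\in Z\wedge F(z)\notin\FF_0(K)\,]$, whose matrix is the conjunction of an analytic condition ($z\in Z$) and a Borel one; projecting $z$ away keeps it analytic, and the outer negation makes $\FF_0(X)$ a $\ca$ set. I expect the main obstacle to be bookkeeping rather than conceptual: verifying that the section map remains Borel when $Z$ is only analytic, which is exactly why I would pass to the closure in the compactification $2^\wo\times K$ and check that it has the same sections as $F$ over points of $Z$; the genuinely load-bearing use of the hypotheses is the closedness of $\pi(P)$, which fails without compactness of $K$.
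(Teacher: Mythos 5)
Your proposal is correct and follows essentially the same route as the paper: for (ii)\,$\Rightarrow$\,(i) both arguments take a clopen separation inside the compact zero-dimensional section $X(z_0)$, use compactness of $K$ to make the projection of the residual closed set $P$ closed in $Z$, and then use zero-dimensionality of $Z$ to cut down to a clopen cylinder, yielding a clopen basis for $X$. The descriptive computation of $\FF_0(X)$ as the complement of a projection of an analytic-by-Borel condition is likewise the paper's argument, including the reliance on Proposition \ref{F0compact} and the Borelness of the section map over the compact factor.
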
 
 
  \begin{proof} 
  $(i) \impl (ii)$   is obvious. 
  
  $(ii) \impl (i)$: Fix $(z,y)\in X$ and $W$   a neighbourhood of $(z,y)$. Since $\dim(X(z))=0$ there  exists an open neighbourhood $V$ of $y$ in $K$ such that $V\cap X(z)$ is clopen in $X(z)$ and  $V\cap X(z)\subset W(z)$  hence we can find  $ W_0$ and $W_1$  two disjoint open subsets of $X$,  such that $ W_0\subset W$ , $W_0(z) = V$ and $W_1(z)= X(z)\setminus V$. 

Since $K$ is compact then $\pi$ the projection mapping  on $Z$  is closed hence the set $H=\pi( X\setminus (W_0\cup W_1))$ is closed,  and since  $z\notin H$  there exists a clopen set $U$ such that $z\in U\subset Z\setminus H$, hence  $V=  W_0\cap \pi^{-1}( U)$ is clopen in $X$ and   $(z,y)\in V\subset W$.

This proves the equivalence, and applying this equivalence to  $F\in \FF(X)$ we can write:
$$F\in \FF_0(X)  \iff   \forall z\in Z ,  \ F(z)\in \FF_0(K)$$
equivalently:
$$F\not\in \FF_0(X)  \iff   \exists z\in Z ,  \ L\not\in \FF_0(K): F(z)=L$$
Since $K$ is compact one easily  checks  that  the set  
 $\{(F,z, L)\in  \FF(X) \times Z\times \FF_0(K):   F(z)=L\}$  
 is a $\Gd$ subset of $\FF(X) \times Z\times\FF_0(K)$. Then since $Z$ is analytic and by  Proposition  \ref{F0compact} $\FF_0(K)$ is Polish, it follows then from the second equivalence above  that the set $\FF(X)\setminus\FF_0(X)$ is $\ana$ hence the set $\FF_0(X)$ is $\ca$.  
\end{proof}

\begin{proposition}If $X=Y\times Z$ is the product of  a $\s$-compact space  $Y$ and an analytic zero-dimensional space $Z$ then   $\FF_0(X)$ is   $\ca$.                                                                                                                                                                                                                                                                                                                                                                                                                                                                                                                                                                                                                                                                                                                                                                                                                                                                                                                                                           \end{proposition}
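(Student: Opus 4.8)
The plan is to reduce zero-dimensionality of a closed $F\subseteq X$ to a condition on its sections along the zero-dimensional factor $Z$, exactly in the spirit of Proposition \ref{prop}, and then to read off the complexity from the fact that, $Y$ being $\sigma$-compact, $\FF_0(Y)$ is itself a $\Gd$ set. Throughout I write $Y=\bigcup_n K_n$ with $K_n$ compact and increasing, and for $F\in\FF(X)$ and $z\in Z$ I set $F_z=\{y\in Y:(y,z)\in F\}\in\FF(Y)$.

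First I would establish the dimensional equivalence
$$F\in\FF_0(X)\iff \forall z\in Z,\ F_z\in\FF_0(Y).$$
The implication from left to right is immediate, since $F_z$ is homeomorphic to the closed subset $F\cap(Y\times\{z\})$ of $F$, and a subspace of a zero-dimensional space is zero-dimensional. For the converse I would decompose $F=\bigcup_n F\cap(K_n\times Z)$ into closed subsets; by Theorem \ref{dim0} it suffices to check $\dim\bigl(F\cap(K_n\times Z)\bigr)=0$ for each $n$. But $F\cap(K_n\times Z)$ is a closed subset of $Z\times K_n$ with $Z$ zero-dimensional and $K_n$ compact, whose section over $z$ is $F_z\cap K_n$; since each $F_z$ is zero-dimensional so is each $F_z\cap K_n$, hence Proposition \ref{prop} gives $\dim\bigl(F\cap(K_n\times Z)\bigr)=0$. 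Next, since $Y$ is $\sigma$-compact, Proposition \ref{F0compact} shows that $\FF_0(Y)$ is a $\Gd$, hence Borel, subset of $\FF(Y)$; and as $Y$ is analytic, $\FF(Y)$ is analytic, so $\FF(Y)\setminus\FF_0(Y)$ is $\ana$.

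The main obstacle is to show that the section map $\sigma\colon\FF(X)\times Z\to\FF(Y)$, $\sigma(F,z)=F_z$, is Borel. Because the Effros Borel structure of $\FF(Y)$ is generated by the sets $\{G:G\cap V\neq\0\}$ for $V$ in a countable basis, it is enough to prove that $\{(F,z):F_z\cap V\neq\0\}$ is Borel for each basic open $V\subseteq Y$. Writing $V=\bigcup_m L_m$ as an increasing union of compact sets (possible since $V$ is open in the $\sigma$-compact space $Y$), this set equals
$$\{(F,z):F_z\cap V\neq\0\}=\bigcup_m\{(F,z):F\cap(L_m\times\{z\})\neq\0\}.$$
I claim each term on the right is closed. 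Identifying $F$ with its closure $\hat F$ in a compactification $\hat X$ defining the admissible topology (Section \ref{F(X)}), one has $F\cap(L_m\times\{z\})=\hat F\cap(L_m\times\{z\})$ because $L_m\times\{z\}\subseteq X$; the map $(F,z)\mapsto(\hat F,\,L_m\times\{z\})$ into $\KK(\hat X)\times\KK(\hat X)$ is continuous, and the intersection-nonempty relation is closed on $\KK(\hat X)\times\KK(\hat X)$ since $\hat X$ is compact. Hence the displayed set is $\Fs$, and $\sigma$ is Borel. This joint-measurability step is exactly where the argument is more delicate than in Proposition \ref{prop}: there $K$ is compact and the analogous graph is directly $\Gd$, whereas here one must pass to the compactification and exploit the compactness of the $L_m$ precisely because $Y$ is only $\sigma$-compact.

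Finally I would assemble the pieces. Since $\sigma$ is Borel and $\FF(Y)\setminus\FF_0(Y)$ is $\ana$, the set $\sigma^{-1}\bigl(\FF(Y)\setminus\FF_0(Y)\bigr)=\{(F,z)\in\FF(X)\times Z:F_z\notin\FF_0(Y)\}$ is analytic, being the Borel preimage of an analytic set. By the equivalence of the first paragraph,
$$\FF(X)\setminus\FF_0(X)=\bigl\{F:\exists z\in Z,\ F_z\notin\FF_0(Y)\bigr\}$$
is the projection of this analytic set onto $\FF(X)$, hence analytic. Therefore $\FF_0(X)$ is $\ca$, as claimed.
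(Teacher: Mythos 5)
Your proof is correct, but it organizes the complexity computation differently from the paper. The paper's argument is very short: writing $Y=\bigcup_n K_n$ with $K_n$ compact, it observes via Theorem \ref{dim0} that $F\in\FF_0(X)$ iff $F\cap(K_n\times Z)\in\FF_0(K_n\times Z)$ for all $n$, notes that each map $F\mapsto F\cap(K_n\times Z)$ is Borel, and then simply quotes the final clause of Proposition \ref{prop} to get that each $\FF_0(K_n\times Z)$ is $\ca$; a countable intersection of Borel preimages of $\ca$ sets is $\ca$. You instead transpose the quantifiers: you first establish the uniform normal form $F\in\FF_0(X)\iff\forall z\in Z,\ F_z\in\FF_0(Y)$ (using Proposition \ref{prop} only for its dimension-theoretic equivalence, not its complexity conclusion), and then carry out the complexity estimate yourself by proving that the joint section map $(F,z)\mapsto F_z$ is Borel and that $\FF_0(Y)$ is $\Gd$ by Proposition \ref{F0compact}, so that the complement of $\FF_0(X)$ is a projection along the analytic factor $Z$ of an analytic set. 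The extra work you do on the section map (passing to the compactification and writing basic open sets of $Y$ as increasing unions of compacta) is sound and is essentially a uniformized replay of the argument hidden inside the proof of Proposition \ref{prop}, where the compactness of the second factor makes the graph relation directly $\Gd$. What your route buys is the explicit Borel matrix $\{(F,z):F_z\in\FF_0(Y)\}$ and the clean co-projective form of $\FF_0(X)$; what it costs is the joint-measurability lemma, which the paper avoids entirely by reusing Proposition \ref{prop} as a black box. Both proofs rest on the same dimension-theoretic decomposition and both are valid.
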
 
 \begin{proof}  Set  $X=\bigcup_n F_n$ where for all $n$, $F_n=K_n\times Z$ with $K_n$ compact. Then by Theorem \ref{dim} we have for $F\in \FF(X)$:
$$F\in \FF_0(X)  \iff \forall n, \ F\cap F_n\in \FF_0(F_n) $$
Since  the mapping $F\mapsto F\cap F_n$ is Borel the conclusion follows then from Proposition \ref{prop}.  
\end{proof}

Observe that in the following theorem if  $Y$  were zero-dimensional then $X$ would be zero-dimensional too, and then  $\FF_0(X)=\FF(X)$ would be $\ana$, even $\Gd$ if $Y$ and $Z$ are Polish.
Also if $Z$  were $\s$-compact  then by Proposition \ref{F0compact}   $\FF_0(X)$ would be $\Gd$.

 \begin{theorem}   If $X=Y\times Z$ is the product of 
   a non zero-dimensional  analytic  space $Y$ and     a zero-dimensional  non  $\s$-compact analytic space  $Z$    then    $\FF_0(X)$                                                                                                                                                                                                                                                       is   $\ca$-hard.
 \end{theorem}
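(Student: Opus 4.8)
The plan is to produce a continuous reduction of a $\ca$-complete set to $\FF_0(X)$, channelling it through Corollary \ref{E0} so that continuity is handled for free. Fix a coding $x\mapsto T_x$ of trees on $\omega$ by points $x\in 2^\wo$, chosen so that $\alpha\in[T_x]\iff\forall n\ x(\alpha\rst n)=1$ (where $[T_x]$ denotes the set of infinite branches of $T_x$). With this coding the set $\mathcal{T}=\{(x,\alpha)\in 2^\wo\times\Br:\ \alpha\in[T_x]\}$ is \emph{closed} in $2^\wo\times\Br$, while the set of well-founded codes $\WF=\{x:\ [T_x]=\0\}$ is $\ca$-complete. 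Now define
$$E=\{(x,(\alpha,y))\in 2^\wo\times(\Br\times Y):\ \alpha\in[T_x]\},$$
a closed subset of $2^\wo\times(\Br\times Y)$, whose section at $x$ is $E(x)=[T_x]\times Y$.

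The crucial observation is the dimension jump in the sections. If $[T_x]=\0$ then $E(x)=\0$ is zero-dimensional; if $[T_x]\neq\0$, then choosing any $\alpha_0\in[T_x]$ the set $\{\alpha_0\}\times Y$ is a closed copy of $Y$ inside $E(x)$, so by monotonicity of $\dim$ and the hypothesis $\dim Y\geq 1$ we get $\dim E(x)\geq\dim Y\geq 1$. Hence $E_0=\{x\in 2^\wo:\ \dim(E(x))=0\}=\WF$. Applying Corollary \ref{E0} with the Polish zero-dimensional space $2^\wo$ in the role of the first factor, $E_0=\WF$ is reducible to $\FF_0(E)$; since $\WF$ is $\ca$-complete, $\FF_0(E)$ is $\ca$-hard.

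It remains to reduce $\FF_0(E)$ to $\FF_0(X)$, which I would do by realizing $E$ as a closed subspace of $X=Y\times Z$. As a topological space $E$ is homeomorphic to $\mathcal{T}\times Y$; being closed in $2^\wo\times\Br$, the nonempty space $\mathcal{T}$ is Polish and zero-dimensional, hence embeds as a closed subspace of $\Br$. On the other hand, $Z$ is analytic and not $\sigma$-compact, so by the Hurewicz theorem (in the form valid for analytic spaces) it contains a closed copy of $\Br$. Composing these closed embeddings places $\mathcal{T}$ as a closed subspace of $Z$, whence $E\cong\mathcal{T}\times Y$ embeds as a closed subspace $E'$ of $Z\times Y=X$. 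Because dimension is a topological invariant one has $\FF_0(E')=\FF_0(X)\cap\FF(E')$, and since $E'$ is closed in $X$ the inclusion $\FF(E')\hookrightarrow\FF(X)$ is continuous; together with the homeomorphism $\FF(E)\cong\FF(E')$ it yields a reduction of $\FF_0(E)$ to $\FF_0(X)$. Chaining $\WF\to\FF_0(E)\cong\FF_0(E')\hookrightarrow\FF_0(X)$ exhibits $\FF_0(X)$ as $\ca$-hard.

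The step I expect to carry the real weight is the structural input from $Z$: invoking the analytic version of Hurewicz's theorem to extract a closed copy of $\Br$ from a merely non-$\sigma$-compact analytic space. The other ingredients---that the branch coding makes $\mathcal{T}$ closed while $\WF$ stays $\ca$-complete, the monotonicity of dimension, and the behaviour of $F\mapsto\FF(F)$ under homeomorphisms and closed inclusions---are routine. I would stress one design point: the $\ca$ condition must be encoded through the \emph{branch set} $[T_x]$, which keeps $E$ closed and hence $E\cong\mathcal{T}\times Y$ Polish and closed-embeddable into $X$. Encoding it instead by an arbitrary $\ana$-complete set $D$ via $E=D\times Y$ would give the same $E_0$ but a non-Polish, non-closed $E$ that need not embed as a closed subspace of $Z\times Y$, breaking the final reduction.
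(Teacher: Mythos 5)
Your proof is correct, but it takes a genuinely different route from the paper's. The paper reduces the $\ca$-complete set $\KK(D)$, where $D=C\setminus Z$ is a countable dense subset of a Cantor set $C\subset\hat Z$ supplied by the Hurewicz theorem, via an explicitly constructed map $\Psi:\KK(C)\to\FF(\hat Y\times C)$, $\Psi(K)=\hat Y\times K\,\cup\,\Phi(K)$ with $\Phi(K)$ a countable cloud of points of $Y\times Z$; continuity (upper and lower semicontinuity of $\Psi$) is verified by hand, and the dichotomy is that $\td\Psi(K)=\Psi(K)\cap X$ is countable when $K\subset D$ but contains a copy $Y\times\{z\}$ of $Y$ when $K\not\subset D$. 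You instead reduce $\WF$ and channel everything through Corollary \ref{E0}, which is precisely the template the paper uses for its other hardness results (Theorems \ref{F0=S12-complete} and \ref{F0=GSxi-complete}) but, interestingly, not for this one; the price is that you must realize $E\cong\mathcal T\times Y$ as a closed subspace of $X$, which consumes the same Hurewicz input in the equivalent form ``$Z$ contains a closed copy of $\Br$'' (indeed $C\cap Z=C\setminus D$ is such a copy, being Polish, zero-dimensional and with all compact subsets of empty interior) together with the universality of $\Br$ among zero-dimensional Polish spaces. What each approach buys: yours is uniform with the rest of the paper and gets continuity of the reduction for free from Theorem \ref{hatE}; the paper's explicit map avoids the closed-embedding step and the detour through an auxiliary space $E$. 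Two minor points you should make explicit: when $[T_x]=\0$ the section $E(x)$ is empty, so you are using the paper's convention that $\0\in\FF_0$ (i.e.\ reading $E_0$ as $\{x:\dim E(x)\leq 0\}$), which is harmless since $\Phi_{\hat E}(x)$ is then a countable closed set; and when you pass from $\FF_0(E)$ to $\FF_0(E')$ and then include into $\FF_0(X)$, you should equip $\FF(E')$ with the admissible topology induced by a compactification of $X$ (Theorem \ref{hatE} allows an arbitrary admissible topology), so that the composite reduction is genuinely continuous rather than merely of the first Baire class.
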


 \begin{proof} 
 Fix a compactification  $\hat Y$  for $Y$, and a zero-dimensional compactification $\hat Z$ of  $Z$. 
 Fix   also an  enumeration  $(y_n)_{n}$ of some countable dense subset  of $Y$.
 Since $Z$  is not $\s$-compact  then  by the classical  Hurewicz  Theorem we can find  a copy of the Cantor set 
  $C\subset \hat Z$   such that   
 $D=C\setminus  Z$  is a dense countable  subset of $C$.  We then fix   an  enumeration 
 $(\hat z_k)_{k}$ of $D$ and  for all $k, \ell$   a     clopen neighbourhood $W_{k,\ell}$ of $\hat z_k$  in $ \hat Z$ of diameter  $< 2^{-k-\ell}$ and an element   $z_{k,\ell}\in W_{k,\ell}\cap Z$. Finally let $(k,\ell)\mapsto \<k,l\>$ denote any bijection from $\wo^2$ to $\wo$. 
  
  Let $\KK(C)$ the (compact) space of all compact subsets of $ C$. For any  $K\in\KK(C)$  set: 
  $$\Phi(K)=  \{(y_{\<k,\ell\>}, z_{k,\ell}) : \ (k,\ell) \text { such that } W_{k,\ell}\cap K\not=\0\}\subset Y\times Z $$
and
$$\Psi(K)= \hat Y\times K  \, \cup\, \Phi(K) \subset \hat Y\times \hat Z\,. $$

\begin{claim}  $\adh{\Gr(\Phi)}\subset \Gr (\Psi)$.
\end{claim}
\begin{proof}
Suppose that $(K,y,z)=\lim_j(K_j,y_{\<k_j,\ell_j\>}, z_{k_j,\ell_j})_j$ with 
$(y_{\<k_j,\ell_j\>}, z_{k_j,\ell_j})\in \Phi(K_j)$. Then   for all $j$ there exists some $z_j\in W_{k_j,\ell_j}\cap K_j$,
so  {$z_j\in   K_j$}  and since $\diam (W_{k_j,\ell_j})< 2^{-k_j-\ell_j}$  then $d (z_{k_j,\ell_j}, z_j)< 2^{-k_j-\ell_j}$.

\nd -- If $\lim(k_j+\ell_j )=\infty$ then $\lim_jd (z_{k_j,\ell_j}, K_j)=0$ hence $z\in K$ and $(y,z)\in  \Psi(K)$.

\nd -- If not  then for infinitely many
 $j$'s the sequence $({k_j,\ell_j})_j$ is constant, with value  say  $(k,\ell)$, so 
 $(K_j,y_{\<k_j,\ell_j\>}, z_{k_j,\ell_j})=(K_j, y_{\<k,\ell\>}, z_{k,\ell})$ with $W_{k,\ell}\cap K_j\not=\0$; and since
 $W_{k,\ell}$ is closed  then $W_{k,\ell}\cap K\not=\0$ hence   $(y,z)\in \Phi(K)\subset \Psi(K)$. 
\end{proof} 

It follows from the claim that
$ ( \hat Y\times K)\cup \adh{\Gr(\Phi)} \subset \Gr(\Psi) \subset ( \hat Y\times K) \cup \Gr(\Phi) $ 
 hence $\Gr(\Psi)=( \hat Y\times K)\cup \adh{\Gr(\Phi)}$ is a closed  subset of  $\KK(C)\times \hat Y\times C$
and the  compact-valued mapping $K\mapsto \Psi(K)$  is   u.s.c..   Hence for any open set $U\subset \hat Y\times C$ the set $\{K\in\KK(C) : \ \Psi(K)\subset U\}$ is open.
Moreover if   $U=V\times W$ is a  basic open  set then
$$\Psi(K)\cap U\not=\0\iff \left\{
\begin{array}{l}  \exists\,({k,\ell}) , \  (y_{\<k,\ell\>}, z_{k,\ell}) \in U \text{  and } K\cap W_{k,\ell}\not=\0 \, ,\\
\text{or } K\cap W\not=\0 
 \end{array}\right.$$
so  the set $\{K\in\KK(C) : \ \Psi(K)\cap U\not=\0\}$ is open too. Hence the mapping  $\Psi: \KK(C)\to \FF(\hat Y\times C)$ is continuous. 

Observe that   $\hat X=\hat Y\times\hat Z$ is a compactification of $X=Y\times Z$ and for all $K$, $\Psi(K)\cap  X$ is a closed subset of $X$ which by the lemma is dense in $\Psi(K)$. Hence if we identify $\FF(X)$ to a subspace of $\FF(\hat X)$ then the mapping $\Psi$ is identified to the mapping  $\td\Psi:  \KK(C)\to \FF(X)$ with $\td \Psi (K)=\Psi(K)\cap  X$ which is then continuous for the corresponding topology. Observe finally:

-- if $K\in \KK(D)$  then $\td \Psi (K)=\Phi(K)$  is countable hence $\td \Psi (K)\in \FF_0(X)$ 

-- if not and $z\in K\setminus D$ then $\td\Psi (K)\supset Y\times \{z\}$  hence $\td \Psi (K)\not\in \FF_0(X)$ 

\nd Hence $\td \Psi$ is a continuous reduction of  $\KK(D)$ to  $\FF_0(X)$, and it is well known that  if $D$ is   any dense  countable subset  of the Cantor space then $\KK(D)$ is a  $\ca$-complete  subset of  $\KK(2^\wo)$,   hence  $\FF_0(X)$ is $\ca$-hard.
 \end{proof}

\begin{corollary}\label{IxP}  If $X=Y\times Z$ is the product of  a  $\s$-compact non zero-dimensional   space  $Y$ and an analytic  zero-dimensional non $\s$-compact    space $Z$ then   $\FF_0(X)$ is $\ca$-complete.                                                                                                                                                                                                                                                                                                                                                                                                                                                                                                                                                                                                                                                                                                                                                                                                      \end{corollary}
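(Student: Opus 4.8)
The plan is simply to combine the two results immediately preceding this corollary, after checking that the hypotheses here are stronger than those of each. The one point worth isolating is purely topological: a $\s$-compact separable metrizable space is an $\Fs$ subset of its completion, hence Borel, hence analytic. Consequently the space $Y$ in the statement, being $\s$-compact, is in particular analytic, and this is what lets us feed it into results phrased for analytic factors.

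For the upper bound I would invoke the preceding Proposition, which asserts that $\FF_0(Y\times Z)$ is $\ca$ whenever $Y$ is $\s$-compact and $Z$ is analytic and zero-dimensional. These are exactly the hypotheses assumed on $Y$ and $Z$, so one reads off that $\FF_0(X)\in\ca$ with no further work.

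For the lower bound I would invoke the preceding Theorem, which asserts that $\FF_0(Y\times Z)$ is $\ca$-hard whenever $Y$ is a non zero-dimensional analytic space and $Z$ is a zero-dimensional non $\s$-compact analytic space. Here $Y$ is non zero-dimensional and (by the observation above) analytic, while $Z$ is assumed analytic, zero-dimensional and non $\s$-compact; thus both factors satisfy the hypotheses of that theorem and $\FF_0(X)$ is $\ca$-hard.

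Since $\FF_0(X)$ is simultaneously a member of $\ca$ and $\ca$-hard, it is $\ca$-complete by the definition recalled in Section \ref{DST}, and the proof is complete. I expect no genuine obstacle: the entire argument reduces to the remark that $\s$-compactness entails analyticity together with a routine matching of hypotheses against the two cited statements.
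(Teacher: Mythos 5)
Your proposal is correct and is exactly the argument the paper intends (the corollary is stated without proof precisely because it follows by combining the preceding Proposition for the upper bound with the preceding Theorem for the lower bound). Your one substantive check --- that a $\sigma$-compact separable metrizable space is analytic, so that $Y$ meets the hypotheses of the hardness theorem --- is the right point to isolate, and it goes through.
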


 \begin{corollary}\label {erdos} If $X$ is  a   non zero-dimensional  and non $\s$-compact  analytic  space   homeomorphic  to its square $X^2$   then  $\FF_0(X)$  is    $\ca$-hard.                                                                                                                                                                                                                                                 \end{corollary}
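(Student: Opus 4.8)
The plan is to exhibit a closed subspace of $X$ that already falls under the hypotheses of the preceding theorem, and then to push $\ca$-hardness back to $X$ along the inclusion of hyperspaces. First I would extract from $X$ a closed copy of the Baire space. Since $X$ is analytic and not $\s$-compact, the classical Hurewicz theorem applies exactly as in the proof of the preceding theorem, now with $X$ in the role of $Z$: embedding $X$ in a metrizable compactification $\hat X$ we obtain a Cantor set $C\subset \hat X$ such that $D=C\setminus X$ is a dense countable subset of $C$. Then $Z_0:=C\cap X=C\setminus D$ is closed in $X$ (being the trace on $X$ of a closed subset of $\hat X$), and as a copy of the Cantor set minus a countable dense subset it is homeomorphic to $\omega^\omega$; in particular $Z_0$ is a zero-dimensional, non $\s$-compact, analytic (indeed Polish) space.

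Next I would invoke the hypothesis $X\cong X^2$. Since $Z_0$ is closed in $X$, the product $X\times Z_0$ is closed in $X\times X$, so fixing a homeomorphism $h:X^2\to X$ its image $W=h(X\times Z_0)$ is a closed subspace of $X$ homeomorphic to $X\times Z_0$. Now $X\times Z_0$ is precisely a product $Y\times Z$ with $Y=X$ a non zero-dimensional analytic space and $Z=Z_0$ a zero-dimensional non $\s$-compact analytic space, so the preceding theorem yields that $\FF_0(X\times Z_0)$ is $\ca$-hard. As $\ca$-hardness of $\FF_0$ is invariant under homeomorphism of the underlying space (a homeomorphism induces a homeomorphism of the hyperspaces carrying $\FF_0$ to $\FF_0$ for the admissible topologies), it follows that $\FF_0(W)$ is $\ca$-hard as well.

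Finally I would transfer the hardness from $W$ to $X$. Because $W$ is closed in $X$, the set $\FF(W)=\{F\in\FF(X):\ F\subset W\}$ is a closed subset of $\FF(X)$, its complement $\{F\in\FF(X):\ F\cap (X\setminus W)\nv\}$ being open by \ref{F(X)}.b). Since dimension is intrinsic and $W$ is closed in $X$, one has $\FF_0(W)=\FF_0(X)\cap\FF(W)$, so $\FF_0(W)$ is a closed subset of $\FF_0(X)$. By the transfer principle recalled in Section \ref{DST}, the $\ca$-hardness of $\FF_0(W)$ then forces the $\ca$-hardness of $\FF_0(X)$, which is the desired conclusion. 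The only genuinely delicate point in this argument is the very first step, namely that a non $\s$-compact analytic space must contain a \emph{closed} copy of $\omega^\omega$; but this is exactly the form of the Hurewicz theorem already used in the proof of the preceding theorem, so no additional work is required there, and the real content of the corollary is the product decomposition supplied by the hypothesis $X\cong X^2$.
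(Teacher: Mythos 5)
Your proof is correct and is precisely the derivation the paper leaves implicit for this corollary: extract a closed copy $Z_0$ of $\wo^\wo$ from $X$ via Hurewicz, apply the preceding theorem to the closed subspace $X\times Z_0$ of $X^2\cong X$, and transfer $\ca$-hardness along the closed embedding given by $\FF_0(W)=\FF_0(X)\cap\FF(W)$ with $\FF(W)$ closed in $\FF(X)$. The one point you rightly single out --- invoking Hurewicz for a non $\s$-compact \emph{analytic} (not necessarily Polish) space to get a closed copy of $\wo^\wo$ --- is used in exactly the same form in the paper's proof of the preceding theorem, so your argument is at the paper's own level of rigor.
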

 
 \begin{ssection} {\bf The Erd\H os space:}
 {\rm As an application we mention the case of  Erd\H os space $S$ (\cite{erd}). We recall   
 that this is the  closed subset  $S$ of the Hilbert space $\ell^2=\ell^2(\mathbb N)$ defined by: 
 $$S=\{(x_n)_{n\geq 0}\in\ell^2: \forall n, \ x_n=0 \ \text{ or }  x_n^{-1}\in \mathbb N\}.$$
We claim that  the set $\FF_0(S)$ is $\ca$-hard. Indeed since   any compact subset of $S$ is zero-dimensional then by 
Theorem \ref{dim} the Polish space $S$ is non $\s$-compact.                                                                                                                                                                                                                                       Moreover the isometry 
$\Psi : \ell^2\times \ell^2\to \ell^2$ defined by $\psi(e_k, 0) =e_{2k}$ and $\psi(0,e_k) =e_{2k+1}$ 
induces an homeomorphism from $S^2$ onto $S$, hence by Corollary \ref{erdos} $\FF_0(S)$ is $\ca$-hard.
However we do not know the exact complexity of   $\FF_0(S)$.
}
 \end{ssection}

\end{section}

\begin{section}{The maximum complexity of $\FF_0(X)$ when $X$ is $\ana$}   \label{maxpca}

  \begin{proposition} \label{F0=S12}  If $X$ is an analytic  space then $\FF_0(X)$ is    $\pca$. 
\end{proposition}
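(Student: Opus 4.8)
The plan is to write out a countable zero-dimensionality criterion for closed subsets and count quantifiers. Fix a metrizable compactification $\hat X$ of $X$ defining the given admissible topology, with a countable basis $(W_n)_{n\in\wo}$ of open subsets of $\hat X$, and identify each $F\in\FF(X)$ with the compact set $K=\adh F^{\hat X}\in\KK(\hat X)$, so that $F=K\cap X$. Since $X$ is analytic it is an analytic subset of the Polish space $\hat X$, so $\{x\in\hat X:\ x\notin X\}$ is $\ca$; this single observation is what will keep the whole estimate inside $\pca$.

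First I would record the topological criterion, an $\hat X$-version of the one used in Proposition \ref{F0compact}. For a subspace $F\subseteq\hat X$ one has $\dim F=0$ iff for every pair $(i,j)$ with $\adh{W_i}\cap\adh{W_j}=\0$ the disjoint closed sets $F\cap\adh{W_i}$ and $F\cap\adh{W_j}$ can be separated by a set clopen in $F$. Moreover every clopen subset $U$ of $F$ has the form $F\cap O$ for an open $O\subseteq\hat X$ with $F\cap\partial O=\0$: take $O=\{x\in\hat X:\ d(x,U)<d(x,F\setminus U)\}$, whose frontier is contained in $\{d(\cdot,U)=d(\cdot,F\setminus U)\}$ and therefore meets $F$ nowhere. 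Coding an open set $O_A=\bigcup_{n\in A}W_n$ by $A\in 2^\wo$, this turns the criterion into
$$F\in\FF_0(X)\iff \forall(i,j)\in S\ \exists A\in 2^\wo\ \Theta(F,A,i,j),$$
where $S=\{(i,j):\ \adh{W_i}\cap\adh{W_j}=\0\}$ is a fixed index set and $\Theta$ is the conjunction of the three conditions $F\cap\adh{W_i}\subseteq O_A$,\ $F\cap\adh{W_j}\cap O_A=\0$ and $F\cap\partial O_A=\0$.

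Next I would verify that $\Theta$ is $\ca$ in $(F,A)$, uniformly in $(i,j)$. The relations $x\in K$,\ $x\in\adh{W_i}$,\ $x\in O_A$,\ $x\in\adh{O_A}$ and $x\in\partial O_A$ are all Borel in $(K,A,x)$: the first is closed in $\KK(\hat X)\times\hat X$, and the others are arithmetical in $A$ and $x$ relative to the fixed basis. Each conjunct of $\Theta$ then has the shape $F\cap C_A=\0$ for a suitable set $C_A$ with Borel membership relation, that is
$$\forall x\in\hat X\ \bigl(x\in K\ \wedge\ x\in C_A\ \Longrightarrow\ x\notin X\bigr),$$
which reads $\forall x\,[\text{Borel}(K,A,x)\Rightarrow x\notin X]$. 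Since $x\notin X$ is $\ca$, and $\ca$ is closed under union with Borel sets and under universal quantification over the Polish space $\hat X$, each conjunct, hence $\Theta$, is $\ca$.

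The quantifier count then finishes the proof: $\exists A\in 2^\wo\,\Theta$ is the projection of a $\ca$ set, hence $\pca$, and the countable intersection $\forall(i,j)\in S$ of $\pca$ sets is again $\pca$; therefore $\FF_0(X)\in\pca$. Because any two admissible topologies on $\FF(X)$ are first Baire class isomorphic and $\pca$ is invariant under such isomorphisms, the bound does not depend on the chosen topology. I expect the main obstacle to be precisely the uniform estimate of the third paragraph: one must arrange that the separating open set is captured by a single real parameter $A$ and that all membership conditions remain Borel in $A$, so that the only projective quantifier is the one block $\exists A$ over a $\ca$ matrix. Any slippage here — a hidden second real quantifier, or a non-Borel dependence of $O_A$, $\adh{O_A}$, $\partial O_A$ on $A$ — would push the complexity above $\pca$, so the care lies in keeping the criterion in the exact $\forall_{\text{ctble}}\exists^{\R}\ca$ normal form.
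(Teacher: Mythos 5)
Your proof is correct and follows essentially the same route as the paper's: both reduce zero-dimensionality of $F$ to a countable conjunction (over pairs of basic sets) of existential separation statements with a $\ca$ matrix, landing in the $\forall_{\wo}\,\exists^{\R}\,\ca$ normal form and hence in $\pca$. The only real difference is the parametrization of the witness — you code the separating open set of the compactification by a real $A\in 2^\wo$ and absorb the analyticity of $X$ into the matrix through the $\ca$ condition ``$x\notin X$'', whereas the paper quantifies the witness over the analytic hyperspace $\FF(X)$ itself and uses the Borel structure of the inclusion relation.
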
 
\begin{proof} Fix a countable basis $(U_n)_{n\geq 0}$ of the topology of $X$. 
Resuming the analysis in the proof of  Proposition  \ref{F0compact} we have  for any $F\in \FF(X)$:
\begin{align*}
  F\in \FF_0(X)  
  & \iff   \left\{\begin{array}{l} 
  \forall m, n :\ \adh{U_m}\subset U_n , \   \exists \,  V  \text{ open} :\\
 \adh{U_m}  \subset   V\subset  U_n \  \ \wedge  \ \ F   \subset  V \cup \adh{V}^c \\
 \end{array} \right. \\
& \iff   \left\{\begin{array}{l} 
  \forall m, n :\ \adh{U_m}\subset U_n , \   \exists \,  V  \text{ open} :\\
 \adh{U_m}  \subset   V\subset  U_n \  \ \wedge  \ \ F   \cap  V^c \cap \adh{V}=\0 \\
 \end{array} \right. \\
  & \iff \left\{\begin{array}{l} 
  \forall m, n :\ \adh{U_m}\subset U_n , \   \exists \,  F'\in \FF(X):    \\
U_n^c \subset  F' \subset \adh{U_m}^c \  \ \wedge  \ \ F  \cap F'\cap  \bigl({{\rm Int} (F')}\bigr)^c=\0\\
 \end{array} \right.
 \end{align*}
Then  observe that by  {\ref{F(X)}.c)}  condition  ``$U_n^c \subset  F' \subset \adh{U_m}^c $" on $F'$ is $\ana$   while      condition ``$F  \cap F'\cap \bigl({{\rm Int} (F')}\bigr)^c=\0$" on $(F,F')$  is co-analytic since the dual condition 
``$F  \cap F'\cap \bigl({{\rm Int} (F')}\bigr)^c\not=\0$" is equivalent to:
$$\exists x\in  F\cap F' : \ \forall p  \  \bigl(x\not\in U_p \   \vee    \     (\exists q: \  {U_q}\subset U_p \ \wedge \  F'\cap U_q=\0)\bigr)$$
which is  analytic by  {\ref{F(X)}.b)}. It follows from the previous analysis that $\FF_0(X)$ is the projection of  the intersection of an analytic set with a co-analytic set, hence $\FF_0(X)$ is  $\pca$.
\end{proof}

\begin{theorem} \label{F0=S12-complete} There exists an analytic  subspace $X$ of $2^\wo\times \I$ for which  $\FF_0(X)$ is     \hbox{$\pca$-complete.} 
\end{theorem}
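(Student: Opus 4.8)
The upper bound $\FF_0(X)\in\pca$ is already supplied by Proposition \ref{F0=S12}, so the entire task is to produce an analytic $X\subseteq 2^\wo\times\I$ for which $\FF_0(X)$ is $\pca$-hard. My plan is to obtain hardness through Corollary \ref{E0}: if $E\subseteq 2^\wo\times\I$ is analytic, then the set $E_0=\{\alpha\in 2^\wo:\dim(E(\alpha))=0\}$ is reducible to $\FF_0(E)$. Hence it suffices to build an analytic $E$ for which $E_0$ is $\pca$-hard; taking $X=E$ and invoking Proposition \ref{F0=S12} then yields $\pca$-completeness.

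The key translation is dimension-theoretic: a nonempty subset $S\subseteq\I$ satisfies $\dim(S)=0$ precisely when it has empty interior, i.e. contains no nondegenerate interval (a nondegenerate connected subset of $\I$ would force $\dim\geq1$, while empty interior produces a clopen basis). Thus, assuming—as we may arrange by adjoining one fixed point to every section—that each $E(\alpha)$ is nonempty, we have $E_0=\{\alpha:E(\alpha)\text{ has empty interior}\}$ and its complement is $\{\alpha:\exists\,p<q\text{ rational},\ \forall t\in[p,q]\ (\alpha,t)\in E\}$. Since $E$ is analytic, ``$(\alpha,t)\in E$'' is $\ana$, so ``$E(\alpha)$ contains an interval'' is a $\forall^{\mathbb{R}}\ana=\boldsymbol{\Pi}^1_2$ condition and $E_0$ is indeed $\pca$. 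To force hardness I want to realize an arbitrary $\boldsymbol{\Pi}^1_2$ set as $\{\alpha:E(\alpha)\text{ contains an interval}\}$.

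For the construction I fix a $\boldsymbol{\Pi}^1_2$-complete set in normal form $A=\{\alpha:\forall\gamma\in 2^\wo\ (\alpha,\gamma)\in D_0\}$ with $D_0\subseteq 2^\wo\times 2^\wo$ analytic, together with the Baire-class-one binary-expansion surjection $b:\I\to 2^\wo$. The idea is to route the single universal quantifier $\forall\gamma$ through the continuum of points of the interval by making the relevant section ``cylinder-saturated''. Writing $\beta\mapsto((\beta)_i)_i$ for a decoding of $2^\wo\cong(2^\wo)^\wo$, I set
$$D=\{(\alpha,\beta):\ \forall^\infty i\ (\alpha,(\beta)_i)\in D_0\},\qquad E=\{(\alpha,t)\in 2^\wo\times\I:\ (\alpha,b(t))\in D\}.$$
Then $E$ is analytic (a Borel preimage of the analytic set $D$), and each section $D_\alpha=\{\beta:(\alpha,\beta)\in D\}$ is invariant under finite changes of $\beta$, i.e. a tail set. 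Using this invariance one checks that $E(\alpha)=b^{-1}(D_\alpha)$ contains an interval iff $D_\alpha$ contains a basic clopen set iff $D_\alpha=2^\wo$, and that $D_\alpha=2^\wo$ holds iff $\forall\gamma\ (\alpha,\gamma)\in D_0$, i.e. iff $\alpha\in A$ (the ``$\forall^\infty$'' tail choice is exactly what defeats the collapse to analytic complexity that a naive fibered union of witnesses would cause). Consequently $\{\alpha:\dim(E(\alpha))=1\}=A$, so $E_0=A^c$ is $\pca$-complete, and Corollary \ref{E0} together with Proposition \ref{F0=S12} finishes the proof with $X=E$.

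The main obstacle is exactly this last encoding step. One must guarantee that the failure of the $\boldsymbol{\Pi}^1_2$ condition leaves the section with genuinely empty interior, with no stray subinterval surviving; this is what forces a finite-change-invariant $D$ rather than an ordinary union over witnesses, since only tail-invariance upgrades ``$D_\alpha$ meets every cylinder densely'' into the rigid dichotomy ``$D_\alpha=2^\wo$ or $D_\alpha$ contains no cylinder''. One must also handle the countable set of binary-expansion ambiguities (eventually-constant sequences and dyadic endpoints) so that ``$b^{-1}(D_\alpha)$ contains an interval'' matches ``$D_\alpha=2^\wo$'' with no exceptional $\alpha$, for instance by prescribing the values of $b$ on a well-chosen countable set. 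I expect the analyticity of $E$, the reduction provided by Corollary \ref{E0}, and the dimension bookkeeping via Theorem \ref{dim} to be routine; the delicate part is the faithful conversion of the quantifier alternation $\exists\beta\,\forall\gamma$ into the purely topological statement that the section has empty interior.
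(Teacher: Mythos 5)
Your skeleton is exactly the paper's: the upper bound comes from Proposition \ref{F0=S12}, hardness comes from Corollary \ref{E0} applied to an analytic $E\subset 2^\wo\times\I$ whose zero-dimensional sections code a complete set, and the dimension bookkeeping rests on the observation that a subset of $\I$ is zero-dimensional iff it has empty interior. Where you genuinely diverge is in the coding of the complete set into the sections. The paper works with the projection normal form $\pca=\exists^{\R}\ca$: it fixes a copy $G_n\approx\wo^\wo$ inside each rational interval $I_n$ and removes from the fiber $X(\a)$ the point $h_n^{-1}(\b)$ whenever $(\a,h_n(\b))\in B$; a single witness $\b$ for $\a\in A$ then deletes one point from every rational interval, making $X(\a)$ co-dense hence zero-dimensional, while no witness leaves $X(\a)=\I$. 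You instead work with the dual normal form $\boldsymbol{\Pi}^1_2=\forall^{\R}\ana$ and route the universal real quantifier through the interval via binary expansion and a tail-invariant ($\forall^\infty i$) saturation. Your version is correct --- the reduction ``$\a\in A$ iff $E(\a)$ contains an interval'' does go through, because for $\a\notin A$ one can always plant a non-eventually-constant $\b$ in any prescribed cylinder with $(\b)_i=\g_0$ for infinitely many $i$ --- but it is noticeably heavier: the chain of equivalences you state (``contains an interval iff $D_\a$ contains a cylinder iff $D_\a=2^\wo$'') is literally false at the countably many eventually-constant sequences, and you must argue around them rather than merely ``prescribe $b$ on a well-chosen countable set''; also the nonemptiness patch for sections is needed exactly as you say. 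The paper's choice of normal form buys simplicity and robustness: the ``delete a dense set of points indexed by a witness'' template is what later generalizes, with strategies in place of witnesses, to the game-quantifier results of Theorem \ref{F0=GSxi-complete}.
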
 
\begin{proof}  
 Fix  an enumeration $(I_n)_n$ of all  nonempty  subintervals of  $\I$ with endpoints in $\Q$, and  fix for all $n$,  
  $G_n\subset I_n$   a copy of $ \wo^\wo$  and a  homeomorphism $h_n:G_n\to \wo^\wo$.

 Let $A$ be any $\pca$ subset of $2^\wo$,  that we represent as the projection on the first factor of some $\ca$ set  $B\subset 2^\wo\times  \wo^\wo$, and consider the set:
$$X=\{(\a, z)\in 2^\wo\times \I: \ \forall n, \ z\not\in G_n \vee (z\in G_n \wedge  (\a, h_n(z))  \not \in B)\}
$$ 
which  is clearly $\ana$.

\begin{claim}\label{lem} $\a\in A$   if and only if $\dim (X(\a))=0$.   
\end{claim}

\begin{proof}
-- If $\a\in A$   fix $\b\in \wo^\wo$ such that $(\a,\b) \in B$ and set  for all $n$, $z_n=h_n^{-1}(\b)\in G_n$ then 
$(\a, h_n(z_n))=(\a,\b)\in B$ hence $(\a, z_n)\not \in X$. Since the set  $D= \{z_n; \   n\in \wo\}$ is dense in $\I$ then  the set $E=\I\setminus D$  is of empty interior in $I$  hence $\dim(E)=0$  and since $X(\a)\subset E$ then  
$\dim(X(\a))=0$.

-- If $\a\not\in A$ then for all  $\b\in \wo^\wo$  $(\a,\b)\not \in B$ hence for all $z\in \I$,  if  $z\in G_n$ then  $(\a, h_n(z))\not \in B$, hence $X(\a)\supset \I$ hence $\dim(X(\a))\geq 1$. 
\end{proof}
 
It follows then from  Claim~\ref{lem} and Corollary \ref{E0} that the set $A$ is reducible to $\FF_0(X)$. 
So starting with a $\pca$-complete set $A$ and considering  the corresponding  space $X$,    the set  $\FF_0(X)$  is then $\pca$-hard, hence by Proposition \ref{F0=S12}, $\pca$-complete.
\end{proof}

\end{section}


\begin{section}{The maximum descriptive class of $\FF_0(X)$ when $X$ is Borel}  \label{maxborel}

We do not know whether Theorem  \ref{F0=S12-complete} holds for some Polish  space $X$.
We are only able to prove that in this more restrictive context the maximum complexity of $\FF_0(X)$ is much beyond the class $\ca$. More generally we  shall   prove a  similar result  for the case where $X$ is a $\borm \xi$ set, with a lower bound increasing with $\xi$, still   below the class $\boldsymbol{\Delta}^1_{2}$.

\begin{ssection} {\bf The  classes $\Game \G$:}\label{GG}
\medskip

\nd {\bf \ref{GG}.a} {\it Games and strategies:} {\rm 
Given any set $A\subset \wo^\wo$ we denote by $\bG_A$ the standard game on $\wo$ with $A$ as the win condition for Player I.
More explicitly  $\bG_A$  is the game in which the two Players choose alternatively an element 
in $\wo$:

$\begin{array}{ccccccc}
{\rm I:} & n_0&      &n_2&      &\cdots&         \\
{\rm II:}&       &n_1&      &n_3&          &\cdots 
\end{array}$

\nd Thus an  infinite run in the game is  an infinite  sequence $\g=(n_0, n_1,n_2 \cdots)\in \wo^\wo$
and   Player I wins the run if  $\g\in A$.

We view a strategy  in such a game as a  mapping $\s:\wo^{<\wo}\to \wo$ (for Player I ) or  $\tau: \wo^{<\wo}\setminus \{\0\}\to \wo$ (for Player II). We denote by $\Sigma_{\rm I}$ ($\Sigma_{\rm II}$) the set of all strategies for Player I (Player II) that we endow with their natural product topologies for which 
$\Sigma_{\rm I}\approx\Sigma_{\rm II}\approx\wo^\wo$. 

Given any pair 
$(\s,\tau)\in\Sigma_{\rm I}\times\Sigma_{\rm II}$  we define  inductively the infinite run 
$$\s\star\tau=\g=(n_0, n_1,n_2 \cdots)\in \wo^\wo$$ 
by $n_0=\s(\0)$ , $n_{2k}=\s(n_1, n_3, \cdots,n_{2k-1})$  and   $n_{2k+1}=\tau(n_0, n_2, \cdots,n_{2k})$. 
The mapping $\star: \Sigma_{\rm I}\times\Sigma_{\rm II}\to \wo^\wo$ thus defined
is then clearly continuous and:

-- the strategy $\s\in\Sigma_{\rm I}$ is  winning   in the game $\bG_A$ iff for all $\tau \in\Sigma_{\rm II}$, $\s\star\tau\in A$

-- the strategy $\tau\in\Sigma_{\rm II}$ is   winning  in the game $\bG_A$ iff for all $\s \in\Sigma_{\rm I}$, $\s\star\tau\not\in A$

\medskip

\nd {\bf \ref{GG}.b} {\it The game operator  $\Game$:}
{\rm Let     $B\subset X\times \wo^\wo$ where $X$ is an arbitrary Polish space then for all $x\in X$, $B(x)\subset \wo^\wo$ and one can consider the game $\bG_{B(x)}$, and then define 
$$\Game B=\{x\in X: \ \text{ Player I  wins the game } \bG_{B(x)}\}$$

In all the sequel $\G$ denotes a non trivial class  $(\G$ and $\check \G\not=\{\0\})$ which admits  a $\G$-complete set  (that is $\G$ is a Wadge class). In particular $\G$ could be  any  Baire  class or   projective class. 

\medskip

Given such a  class $\G$, a set $A\subset  X$ is said to be in $\Game \G$ if it is of the form 
$A= \Game B$ for some $B\subset X\times \wo^\wo$ in $\G$.  The reader can find in \cite{mo} a detailed study  of
the operator  $\Game$. We state next  only some   elementary properties:
\nd {\it
\begin{listeroman}
\item  ${\Game \G}$ is closed under  countable unions and intersections.
\item If $\G$ is closed under  unions with   closed sets  then  ${\Game \G}$ is closed under  
\\ operation $\A$.
\item If all games in $\G$ are determined then $\Game\check \G$ is the dual class of  ${\Game \G}$.
\item If  $\G\subset \boldsymbol{\Pi}^1_n$ then ${\Game \G}\subset\boldsymbol{\Sigma}^1_{n+1}$ 
and \, if  $\G= \boldsymbol{\Pi}^1_n$ then ${\Game \G}= \boldsymbol{\Sigma}^1_{n+1}$
\end{listeroman}
} 

 Observe that  since all Borel  games are determined then ${\Game \borel} \subset \boldsymbol{\Delta}^1_2$ with   strict inclusion in general. In fact it turns out that sets in ${\Game \borel}$ possess all classical descriptive regularity properties such as Baire property or  universal measurability,  while  for an arbitrary    $\boldsymbol{\Delta}^1_2$ set  such a property might depend on  Set Theory axioms.

  It is easy to see  that $\Game \bora 1=\ca$  and  $\Game \borm 1=\ana$ (see 6D.2 in \cite{mo} edition 2009), and   for Baire classes $\G$ of low rank   the classes ${\Game \G}$ coincide with some more classical classes   considered  much before the intrusion of games in Descriptive Set Theory. In particular the classes $\Game \bora 2$ and $\Game \bora 3$ are related to Kolmogorov's $R$-sets (we refer the reader to  \cite{bg} and \cite{bg2} for definitions and  more details). Also relying on the classical transfinite analysis of the class $\bord 2$ one easily derive from \ref{GG}.b.{\it (ii)} that  $\Game \bord 2$ is  the smallest non trivial class closed under
complementation and   operation $\A$, also known as Selivanovski's class of  $C$-sets.

We also point out that by a result due to Solovay   $\Game \bora 2$ is  exactly the class of sets  admitting a $\ana$-inductive definition  (see \cite{mo}). This probably explains why $\Game \bora 2$ sets arise naturally in analysis,  though   very few   natural  examples of $\Game \bora 2$-complete sets are known, while no such examples are known for higher $\Game \G$ classes. We mention the following two simple examples:
 
 \medskip
 
   {\it

\nd-- {\sc (Louveau \cite{lv}):} There exists a $\Gd$ set $G\subset 2^\wo\times \wo^\wo $ such that if $\pi$ denotes the projection mapping on the first coordinate and $\mathcal B=\{\pi(K); \ K\in \KK(G)\}$ then  the $\s$-ideal  of $\KK( 2^\wo )$ generated by   $\mathcal B$ is \hbox{$\Game \bora 2$-complete.} 

\nd --  {\sc (Saint Raymond \cite{sr}):} 
Let ${\mathcal T}\subset 2^{\wo^{<\wo}}$ denote the set of all trees on $\wo$.  Then the set 
 ${\mathcal Q\mathcal B}=\{T\in \mathcal T:   \exists \a\in\wo^\wo, \ \forall \b\in \lceil T
\rceil \ \a\not\leq \b\}$ (where $\leq$ denotes the product order on $\wo^\wo$)  is \hbox{$\Game \bora 2$-complete.}
 }
 }}
\end{ssection}

The following well known result is the only result we will use in this section. For the convenience of the reader
we give the simple argument which is based on the fundamental notion of ``good universal sets"  which play a central role in recursivity 
(see \cite{mo}).
 
\begin{lemma} \label{Univ} For any Baire class $\G$  there exists a   set 
$U\subset 2^\wo\times\wo^\wo$   in $\G$ such that the set $\Game U$ is $\Game\G$-complete. 
\end{lemma}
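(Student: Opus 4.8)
The plan is to construct a single "good universal set" for the Baire class $\G$ and then feed it through the game operator $\Game$, producing a universal set for $\Game\G$ that is automatically $\Game\G$-complete by a diagonal argument. The statement to prove is Lemma \ref{Univ}: for any Baire class $\G$ there is $U\subset 2^\wo\times\wo^\wo$ in $\G$ with $\Game U$ being $\Game\G$-complete.

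First I would invoke the existence of a \emph{$2^\wo$-universal} set for $\G$ acting on the space $2^\wo\times\wo^\wo$. Concretely, since $\G$ is a Baire class it admits a universal set, and one can choose a \emph{good} universal set $V\subset 2^\wo\times(2^\wo\times\wo^\wo)$ in $\G$: good in the sense that it has the $s$-$m$-$n$ parametrization property, namely for every continuous $f:2^\wo\to 2^\wo$ the section-substitution can be realized by a continuous reindexing of the first coordinate. Writing $V_\e=\{(\a,\g):(\e,\a,\g)\in V\}$ for the $\e$-th section, the family $(V_\e)_{\e\in 2^\wo}$ enumerates all $\G$ subsets of $2^\wo\times\wo^\wo$. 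I then set $U:=V$, viewed as a subset of $(2^\wo\times 2^\wo)\times\wo^\wo$ (grouping the two $2^\wo$ factors as the parameter space), so that $U$ is in $\G$ and each $\G$ subset $B\subset 2^\wo\times\wo^\wo$ arises as a section $U_\e$.

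Next I would define $\Game U$ by applying the game operator along the $\wo^\wo$ coordinate: $\Game U=\{(\e,\a): \text{Player I wins }\bG_{U(\e,\a)}\}$ where $U(\e,\a)=\{\g:(\e,\a,\g)\in U\}$. By the definition of $\Game\G$ in \ref{GG}.b, $\Game U\in\Game\G$. The key point is \textbf{universality transfer}: because $(U_\e)_\e$ enumerates all $\G$-sets $B\subset 2^\wo\times\wo^\wo$, and because every $\Game\G$-set $A\subset 2^\wo$ is by definition $\Game B$ for some such $B$, the section $(\Game U)_\e=\Game(U_\e)$ runs over all $\Game\G$ subsets of $2^\wo$ as $\e$ varies. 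Thus $\Game U$ is $2^\wo$-universal for $\Game\G$. For completeness I must show $\Game U\in\Game\G$ and that every $A'\in\Game\G$ on $2^\wo$ reduces to $\Game U$; the reduction sends $\a'$ to $(\e_0,\a')$ for the fixed index $\e_0$ with $\Game(U_{\e_0})=A'$, which is continuous.

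The \textbf{main obstacle} is the self-reduction step ensuring $\Game U$ is not merely universal but $\Game\G$-\emph{complete} in the strict sense that \emph{every} $\Game\G$ subset of $2^\wo$ reduces to it. This is where the goodness (the $s$-$m$-$n$ property) of $V$ is essential: I need that for a continuous $f:2^\wo\to 2^\wo$ the preimage structure along the parameter can be absorbed into a continuous change of index, so that reductions between base sets $B$ lift to reductions between the derived game-sets $\Game B$. Since the game operator $\Game$ commutes with continuous substitution in the parameter $(\e,\a)$ (as $\star$ is continuous and winning is preserved under relabeling the payoff set via a continuous map on the parameter), monotonicity of $\Game$ under reducibility gives the conclusion. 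I would verify this commutation explicitly: if $\varphi:2^\wo\to 2^\wo\times 2^\wo$ is continuous with $B=\varphi^{-1}(U)$ (sections), then $A=\Game B=\varphi^{-1}(\Game U)$, because Player I wins $\bG_{B(\a')}$ iff Player I wins $\bG_{U(\varphi(\a'))}$, the payoff sets being literally equal. This last identity is the crux and is immediate once the parametrization is set up correctly, so the difficulty is entirely in arranging the good universal set $V$, not in the game-theoretic manipulation.
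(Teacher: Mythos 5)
Your proposal is correct and follows essentially the same route as the paper: take a $2^\wo$-parametrized universal set for $\G$-subsets of $2^\wo\times\wo^\wo$, fold the parameter into the first coordinate via a homeomorphism $2^\wo\times2^\wo\approx2^\wo$, and use the fact that $\Game$ commutes with continuous substitution in the parameter, so that the continuous map $\a'\mapsto(\e_0,\a')$ reduces any given $\Game\G$ set to $\Game U$. The only (harmless) difference is your appeal to the $s$-$m$-$n$ goodness of the universal set, which is not actually needed: a plain universal set suffices, since the reduction is just evaluation at a fixed index.
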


\begin{proof} Fix a homeomorphism $\ph: 2^\wo \times 2^\wo\to2^\wo$
and  for all $\a$ let $\ph_\a$ denote the partial mapping $\ph(\a,\cdot):2^\wo\to2^\wo$  .
Starting from any  $\G$-set $V\subset 2^\wo\times(2^\wo\times\wo^\wo)$  which is 
universal for  $\G$-subsets of  $(2^\wo\times\wo^\wo)$ define
$$U=\{(\ph(\a,\b), \g)\in 2^\wo\times\wo^\wo:\  (\a,\b, \g)\in   V\}$$
Then for any $\G$-set $B\subset 2^\wo\times\wo^\wo$ there exists some $\a\in 2^\wo$ such that  
$B=V(\a)$ so  for any  $(\b, \g)\in 2^\wo\times\wo^\wo$ we have:
$$(\b, \g)\in B\iff  (\a,\b, \g)\in   V\iff (\ph(\a,\b), \g)\in U\iff  (\ph_\a(\b), \g)\in U$$
hence  $U$ is   $\G$-complete. Moreover  for any $\b\in 2^\wo$, since
$B(\b)=U(\ph_\a(\b))$, then: 
\begin{align*}
 \b\in \Game B &\iff  \text{ Player I wins }   \bG_{B(\b)} \\
&\iff  \text{ Player I wins }  \bG_{U(\ph(\a,\b))}\iff
\ph_\a(\b)\in \Game U
\end{align*}
hence $\ph_\alpha$ reduces $\Game B$ to $\Game U$ and
$\Game U$ is   $\Game\G$-complete. 
\end{proof}

\begin{theorem} \label{F0=GSxi-complete} Let $H$ be a perfect non zero-dimensional Polish space and $G$ a dense $\Gd$ subset of $H$. Assume that for every dense subset $D$ of $G$ the space $ H\mns D$ is  zero-dimensional.
Then for any countable ordinal $\xi\geq 3$ there exists a $\borm \xi$  set $X\subset  {\wo^\wo} \times H$ such that   
$\FF_0(X)$ is      $\Game \bora \xi$-hard. 

 If moreover $G$ is nowhere relatively $\sigma$-compact in $H$ then the same conclusion holds for $\xi=2$. 
\end{theorem}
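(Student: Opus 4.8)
The plan is to build, for a given ordinal $\xi$, a set $X\subset\wo^\wo\times H$ by ``fibering'' the universal game set $U$ of Lemma \ref{Univ} over $H$ so that the dimension of the vertical section $X(\a)$ encodes whether Player I wins the game $\bG_{U(\a)}$. Concretely, I would fix a set $U\subset 2^\wo\times\wo^\wo$ in $\bora\xi$ (respectively $\bora 2$) for which $\Game U$ is $\Game\bora\xi$-complete, and then try to produce a $\borm\xi$ set $X$ together with a reduction witnessing that $\Game U$ is reducible to the section-dimension predicate $E_0=\{\a:\dim(X(\a))=0\}$. By Corollary \ref{E0}, $E_0$ is reducible to $\FF_0(X)$ (here the first factor $\wo^\wo$ is Polish and zero-dimensional, as required), so this would immediately give that $\FF_0(X)$ is $\Game\bora\xi$-hard.

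The heart of the construction is to arrange the fibers so that
$$\a\in\Game U\iff\dim(X(\a))=0.$$
The geometric hypotheses on $H$, $G$ are exactly what makes this encoding possible. First I would enumerate a countable dense subset and arrange that the ``danger'' of positive dimension is localized on the dense $\Gd$ set $G$: the idea is to let $X(\a)$ always contain $H\setminus D_\a$ for a suitable set $D_\a\subset G$, so that, by the standing hypothesis, $\dim(X(\a))=0$ precisely when $D_\a$ is dense in $G$, while $\dim(X(\a))\geq 1$ (in fact $X(\a)$ contains a non-zero-dimensional closed piece) when $D_\a$ fails to be dense. Then I would design $D_\a$ so that its density in $G$ is controlled by the outcome of the game $\bG_{U(\a)}$: the positions where Player I has a winning strategy should correspond to having removed a dense set of points, whereas a win for Player II should leave an open region of $H$ intact inside $X(\a)$. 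This is where one exploits the continuous map $\star:\Sigma_{\rm I}\times\Sigma_{\rm II}\to\wo^\wo$ and the structure of the game tree to lay down points of $G$ indexed by partial plays, removing a point exactly when the corresponding play is won by I.

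The main obstacle will be the \emph{complexity bookkeeping}: showing that the resulting $X$ is genuinely $\borm\xi$, not merely projective. Since $U$ is only $\bora\xi$ and the winning condition involves a quantifier over strategies, a naive description of $X$ would land far above $\borm\xi$; the point of basing everything on a \emph{good} universal set and on the continuous play-map is to absorb the game quantifier into the fiber geometry rather than into the definition of $X$ itself, so that membership in $X$ reduces to a $\borm\xi$ condition on the pair $(\a,z)$ expressed through the $G_n$-charts and the $\bora\xi$ set $U$ composed with continuous maps. Verifying this complexity estimate, and simultaneously checking the dimension dichotomy for the fibers (using Theorem \ref{dim} and \ref{dim0} together with the hypothesis that $H\setminus D$ is zero-dimensional for every dense $D\subset G$), is the delicate part. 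The separate clause for $\xi=2$ will require the stronger hypothesis that $G$ is nowhere relatively $\sigma$-compact: this is needed because at level $2$ one must realize the $\Game\bora 2$-hardness through a non-$\sigma$-compactness phenomenon (as in the Hurewicz-type arguments of Section \ref{low}), ensuring that the removed sets can be made dense while keeping the construction closed of the right class.
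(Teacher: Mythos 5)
Your overall framework coincides with the paper's: take the good universal set $U$ from Lemma \ref{Univ}, build $X$ so that $\alpha\in\Game U\iff\dim(X(\alpha))=0$, invoke Corollary \ref{E0} to pass from the section-dimension predicate to $\FF_0(X)$, and handle $\xi=2$ by using Hurewicz (via the nowhere relative $\sigma$-compactness of $G$) to replace the $\Gd$ copies of $\wo^\wo$ inside $G\cap W_n$ by copies that are \emph{closed in $H$}, so that the clause ``$z\notin G_n$'' drops from $\bora 2$ to open --- which is exactly why $\xi\geq 3$ is needed in the general case. The intended dimension dichotomy is also the right one: delete a dense subset $D_\alpha$ of $G$ when Player I wins (so that $X(\alpha)$ sits inside a zero-dimensional set by the standing hypothesis on $H\setminus D$), and leave a full copy of $H$ inside $X(\alpha)$ when Player II wins.

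However, the one step where the actual idea lives is left unspecified, and the hint you give points in a direction that does not work. You propose to ``lay down points of $G$ indexed by partial plays, removing a point exactly when the corresponding play is won by I.'' Whether Player I wins is a $\exists\sigma\,\forall\tau$ statement about a \emph{strategy}, not about individual plays; if removal is decided play-by-play, the removed set can be dense without I having a winning strategy (it suffices that runs landing in $U(\alpha)$ are densely coded), so the equivalence between $\alpha\in\Game U$ and the density of $D_\alpha$ fails. The paper's device is to index the points of $G_n\subset G\cap W_n$ by \emph{full strategies of Player I}, via homeomorphisms $h_n:G_n\to\Sigma_{\rm I}\approx\wo^\wo$, and to adjoin the space $\Sigma_{\rm II}$ of Player II's strategies as an extra coordinate of the ambient space: one sets $X=\{(\alpha,\tau,z)\in 2^\wo\times\Sigma_{\rm II}\times H:\ \forall n\ (z\in G_n\Rightarrow h_n(z)\star\tau\notin U(\alpha))\}$, the relevant section being taken at $\alpha$ alone (the factor $2^\wo\times\Sigma_{\rm II}\approx\wo^\wo$ absorbs $\tau$ into the ``parameter'' side so Corollary \ref{E0} applies). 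Then a winning strategy $\sigma^*$ for I excludes $z_n=h_n^{-1}(\sigma^*)$ from \emph{every} $\tau$-slice simultaneously, producing a dense $D=\{z_n:n\in\wo\}\subset G$ with $X(\alpha)\subset\Sigma_{\rm II}\times(H\setminus D)$, which is zero-dimensional; and the converse direction requires Borel determinacy, which you do not mention: if $\alpha\notin\Game U$ then II has a winning strategy $\tau^*$, the slice of $X(\alpha)$ at $\tau^*$ contains all of $H$, and so $\dim(X(\alpha))\geq\dim(H)>0$. Without the extra $\Sigma_{\rm II}$ coordinate and the appeal to determinacy, the dichotomy you need cannot be established.
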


\begin{proof} 
 Fix   a countable  basis $(W_n)$  of the topology in $H$. Since  $G$ is dense in $H$ then   for all $n$ the set $G\cap W_n$ is perfect so we can fix  a    copy of $\wo^\wo$  say  $G_n$ in $G\cap W_n$  and      a homeomorphism  $h_n:G_n\to \Sigma_{\rm I}\approx \wo^\wo$.

Given any Borel set $U \subset 2^\wo\times\wo^\wo$ let
$$X_U=\{(\a,\tau, z)\in 2^\wo\times\Sigma_{\rm II}\times H: \  
\forall n, \ z\in G_n \rightarrow  (\a, h_n(z)\star\tau)  \not \in U\}$$

\begin{claim}\label{lemxi}     $\a\in \Game U$  if and only if  $\dim(X_U(\a))=0$. 
\end{claim}

\begin{proof}
-- If $\a\in  \Game U$ then Player I has a winning strategy in the game $\bG_{U(\a)}$. Fix such a strategy 
$\s^*\in\Sigma_{\rm I}$ and let  for all $n$, $z_n=h_n^{-1}(\s^*)\in G_n$ then for 
all  $\tau\in\Sigma_{\rm II}$, $h_n(z_n)\star\tau=\s^*\star \tau\in U(\a)$  hence $(\a, \tau,z_n)\not \in X_U$. Then  by construction   the set  $D= \{z_n: \   n\in \wo\}$ is dense in $G$ hence      the set  $E=H\setminus D$ is zero-dimensional. But  by the observation above $X_U(\a)\subset \Sigma_{\rm II}\times E$ hence $\dim(X_U(\a))=0$.

-- If $\a\not\in  \Game U$ then, since the game  $\bG_{U(\a)}$ is determined, Player II has a winning strategy. Fix such a strategy  
 $\tau^*\in \Sigma_{\rm II}$; then  for all $z\in H$,  if  $z\in G_n$  then $\s_n=h_n(z)\in\Sigma_{\rm I}$,  and since 
 $\tau^*$ is winning  for Player II   then
 $ h_n(z_n)\star\tau^*=\s_n\star \tau^*\not\in U(\a)$  
hence $X_U(\a,\tau^*)\supset H$ and so $\dim(X_U(\a))\geq \dim(X_U(\a,\tau^*))\geq \dim(H)>0$. 
\end{proof}

From now on   we let  $U \subset 2^\wo\times\wo^\wo$ be a  $\bora \xi$  set  given by  Lemma \ref{Univ} and set
$X=X_U$.  Then as in the proof of Theorem \ref{F0=S12-complete}, applying 
Corollary \ref{E0}, one   concludes that the set $\FF_0(X)$ is $\Game\bora\xi$-hard. 

To finish the proof we only need to check that $X$ has the expected complexity. 
Observe first that since $2^\wo\times\Sigma_{\rm II}\approx 2^\wo\times\wo^\wo\approx \wo^\wo$ then $X$ can indeed  be viewed as a subset of  $\wo^\wo\times H$. Moreover for a given $n$:
$$\bigl(z\in G_n \rightarrow  (\a, h_n(z)\star\tau)  \not \in U\bigr) \iff 
 \bigl(z\not\in G_n  \ \vee \  (z\in G_n\wedge (\a, h_n(z)\star\tau)  \not \in U)\bigr)$$
Since $U$ is     $\bora \xi$   and since $h_n:G_n\to 2^\wo\times\wo^\wo$ is continuous and its domain $G_n\approx \wo^\wo$ is  a $\borm 2$ set then  
 for all $\xi\geq 2$ the condition ``$z\in G_n\wedge (\a, h_n(z)\star\tau)  \not \in U$''  on $(\a,\tau, z)\in 2^\wo\times\Sigma_{\rm II}\times H$  is   $\borm \xi$. Moreover  if $\xi\geq 3$   the condition ``$z\not\in G_n$" on $z\in H$ defines a  $\bora 2$ subset of $H$, hence an (absolute) $\borm \xi$ set.
This  proves the first part of the theorem.

\sk

If $G$ is nowhere relatively $\s$-compact in $H$ then no  $ W_n\cap G$ is contained in a $\s$-compact subset of $H$. Hence by Hurewicz' theorem   for all $n$ there exists a  subset $ G_n$ of $W_n\cap G$ which is closed in $H$ and homeomorphic to $ \wo^\wo$.
  Then resuming the complexity analysis of $X$  the last condition ``$z\not\in G_n$"  defines now an open subset of $H$  hence an (absolute) $\borm 2$ set, and it follows that  if $\xi=2$ then $X$ is a $\borm 2$ set.                                                           
\end{proof}

 \begin{corollary} \label{F0=GSxi=3-complete} For any   {positive countable ordinal $\xi\not =2$} there exists a $\borm \xi$  set $X\subset \wo^\wo\times \I$ such that  $\FF_0(X)$ is      $\Game \bora \xi$-hard. 
\end{corollary}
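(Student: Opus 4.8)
The plan is to derive Corollary \ref{F0=GSxi=3-complete} from Theorem \ref{F0=GSxi-complete} simply by exhibiting, for each $\xi\neq 2$, a concrete choice of $H\subset\I$ and $G\subset H$ satisfying the hypotheses of that theorem. Since the theorem already handles every $\xi\geq 3$ (and the remaining admissible value $\xi=1$, where the statement is trivial or handled separately), all that remains is to verify the two geometric conditions: that $H$ is a perfect non zero-dimensional Polish space, that $G$ is a dense $\Gd$ subset, and crucially that for every dense $D\subseteq G$ the complement $H\setminus D$ is zero-dimensional. First I would recall that $\I$ itself is perfect and one-dimensional, so taking $H=\I$ is the natural candidate; then I would choose $G=\PP\cap\I$, the irrationals in the interval, which is a dense $\Gd$ subset of $\I$.

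The key step is verifying the dense-removal condition. Suppose $D$ is any dense subset of $G=\PP\cap\I$. Then $D$ is in particular a dense subset of $\I$, so its complement $\I\setminus D$ has empty interior in $\I$, and any subset of $\I$ with empty interior is zero-dimensional (a subset of the line is zero-dimensional iff it contains no interval). Hence $H\setminus D=\I\setminus D$ is zero-dimensional, which is exactly the hypothesis required. This handles the first part of Theorem \ref{F0=GSxi-complete}, giving a $\borm\xi$ set $X\subset\wo^\wo\times\I$ with $\FF_0(X)$ being $\Game\bora\xi$-hard, for every $\xi\geq 3$.

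There is one subtlety I would flag as the main point requiring care: for the value $\xi=2$ the theorem needs the stronger hypothesis that $G$ is nowhere relatively $\s$-compact in $H$. With $H=\I$ and $G=\PP\cap\I$ this stronger hypothesis is indeed satisfied (no neighborhood of an irrational meets $\PP$ in a relatively $\s$-compact set, since $\PP$ is nowhere $\s$-compact in $\R$ by Hurewicz's theorem), so in fact the construction even works for $\xi=2$ here. This is consistent with the corollary's exclusion of $\xi=2$ being a restriction imposed only because the authors record the case $\xi=2$ separately in Theorem \ref{main2} with a better (Polish) ambient space; thus I would simply invoke Theorem \ref{F0=GSxi-complete} for $\xi\geq 3$ with $H=\I$, $G=\PP\cap\I$, noting that $\xi=1$ gives $\FF_0(X)=\ca$-hardness already covered by the earlier results in Section \ref{low}.

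To write the proof cleanly I would state: apply Theorem \ref{F0=GSxi-complete} with $H=\I$ and $G=\I\cap\PP$. The only nontrivial verification, namely that $\I\setminus D$ is zero-dimensional for every dense $D\subseteq G$, follows because such a $D$ is dense in $\I$, so $\I\setminus D$ has empty interior, and a subset of $\I$ with empty interior is zero-dimensional by Theorem \ref{dim0} (or directly, since it contains no subinterval). This yields for each $\xi\geq 3$ the desired $\borm\xi$ set $X\subset\wo^\wo\times\I$, completing the proof.
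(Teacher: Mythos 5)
Your proof of the corollary as stated is correct and is essentially the paper's own argument: for $\xi\geq 3$ the authors apply Theorem \ref{F0=GSxi-complete} with $H=G=\I$ (your choice $G=\PP\cap\I$ satisfies the same hypotheses and changes nothing of substance), and for $\xi=1$ they invoke Corollary \ref{IxP} with $X=\PP\times\I$, which is the "earlier result of Section \ref{low}" you gesture at.

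However, your parenthetical claim that the construction "even works for $\xi=2$" with $H=\I$ and $G=\PP\cap\I$ is false, and the error is worth isolating. The hypothesis for $\xi=2$ is that $G$ be nowhere relatively $\s$-compact \emph{in $H$}, which, as the proof of Theorem \ref{F0=GSxi-complete} makes explicit, means that no trace $W_n\cap G$ is contained in a $\s$-compact subset of $H$; this is exactly what lets Hurewicz's theorem produce copies of $\wo^\wo$ inside $G$ that are \emph{closed in $H$}. When $H=\I$ is compact, every subset of $H$ is contained in a $\s$-compact subset of $H$ (namely $H$ itself), so this hypothesis fails for every choice of $G$ --- and indeed no closed subset of a compact space can be homeomorphic to $\wo^\wo$. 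The non-$\s$-compactness of $\PP$ as a space in its own right is not the relevant condition. Had your aside been correct, it would settle in one line the case $\xi=2$ for subsets of $\wo^\wo\times\I$, which the authors explicitly state they do not know how to do; this is precisely why they develop hypergraphs in Sections \ref{Hypergraphs} and \ref{maxpol} to obtain the weaker Theorem \ref{F0=GSxi=2-complete}. Since the corollary excludes $\xi=2$, the slip does not invalidate your proof of the statement, but the aside should be deleted.
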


\begin{proof}  {For   $\xi=1$, $\Game \bora 1=\ca$ and the result  follows from Corollary \ref{IxP}   with $X=\PP\times \I$;   and  for $\xi\geq 3$  the result  follows from Theorem  \ref{F0=GSxi-complete}, with $H=G=\I$.}
\end{proof}

 We do not know whether Corollary \ref{F0=GSxi=3-complete} holds for $\xi=2$, but  in  Section \ref{maxpol}
 we shall construct a    $\borm 2$  subset of  $\wo^\wo\times \I^2$     for which the set $\FF_0(X)$
 is  $\Game \bora 2$-hard.


 \end{section}
 \begin{section}{Hypergraphs}\label{Hypergraphs}

   Given any  function $f: P \subset \R \to \R $ and any element $a\in \adh P$   we denote by:
\begin{align*}
 \Omega f (a) & :=\{\lim_jf(x_j):  \  x_j\in P \ \text{ and }\lim_j x_j=a\}  \\
 \Omega^- f (a)& :=\{\lim_jf(x_j):  \  x_j\in P , \ x_j\leq a \ \text{ and }\lim_j x_j=a\}\\
\Omega^+ f (a) &: =\{\lim_jf(x_j):  \  x_j\in P, \ x_j\geq a \  \ \text{ and }\lim_j x_j=a\}
\end{align*}
 the sets of all (left, right) cluster values of $f$ at $a$, so   $ \Omega f (a)= \Omega^- f (a)\cup \Omega^+ f (a)$. Observe   that if $G$ is the graph of $f$ and $ \adh G$ its closure in $\R ^2$ then 
 $\Omega f (a)= \adh G(a)$.

\begin{lemma} \label{hyper}   Given any  countable  subset $Q$ of  a non trivial  interval $I\subset \R $
there exists a continuous function $f:P= I \setminus Q \to \R $   satisfying for all $a\in Q$:

a)  $\Omega f (a) =\Omega^- f (a) =\Omega^+f (a)$ is a non trivial interval.

b)  There  exists a  continuous function $g:I\setminus\{a\} \to \R $ such that the   function

\quad    $f-g: P \to \R $   admits a limit   at $a$.
 \end{lemma}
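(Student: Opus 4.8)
The plan is to build $f$ by superposing one elementary oscillating function for each point of $Q$. Enumerate $Q=\{q_k : k\in\wo\}$ with the $q_k$ pairwise distinct, fix a summable sequence of positive weights, say $c_k=2^{-k}$, and set $\psi_k(x)=\sin\bigl(1/(x-q_k)\bigr)$ for $x\neq q_k$. The candidate is
$$f(x)=\sum_{k\in\wo}c_k\,\psi_k(x),\qquad x\in P=I\setminus Q.$$
Each $\psi_k$ is continuous on $P$ with $\abs{\psi_k}\leq 1$, and since $\sum_k c_k<\infty$ the series converges uniformly on $P$; hence $f$ is continuous on $P$, as required. The virtue of the building block $\psi_k$ is that it carries exactly the two features the statement calls for: it is singular \emph{only} at $q_k$ (continuous on $\R\setminus\{q_k\}$), and as $x\to q_k$ from either side one has $1/(x-q_k)\to\mp\infty$, so $\psi_k$ runs through all of $[-1,1]$ infinitely often from each side, giving $\Omega^-\psi_k(q_k)=\Omega^+\psi_k(q_k)=[-1,1]$.

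Fixing $a=q_{k_0}\in Q$, for clause b) I would simply take $g=c_{k_0}\psi_{k_0}$, which is continuous on $I\setminus\{a\}$ precisely because $\psi_{k_0}$ is singular only at $a$. Then $f-g=\sum_{k\neq k_0}c_k\psi_k$ is a uniformly convergent series each of whose terms is continuous, hence has a limit, at $a$; by the standard interchange of limits under uniform convergence, $f-g$ admits at $a$ the limit $L=\sum_{k\neq k_0}c_k\sin\bigl(1/(a-q_k)\bigr)$, which is clause b). For clause a) I would write $f=c_{k_0}\psi_{k_0}+(f-g)$ near $a$: the second summand tends to $L$ from both sides while $c_{k_0}\psi_{k_0}$ oscillates densely through $c_{k_0}[-1,1]$ from each side, so $\Omega^- f(a)=\Omega^+ f(a)=\Omega f(a)=[L-c_{k_0},\,L+c_{k_0}]$, a non-trivial interval.

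The one point that needs care — and the only place where the density of $Q$ could cause trouble — is the behaviour of the remainder $f-g$ at $a$: the other singularities $q_k$ may accumulate at $a$ from both sides, so no punctured neighbourhood of $a$ avoids them. This is exactly what uniform convergence is for: a uniformly convergent sum of functions each continuous at $a$ is again continuous at $a$ on $P\cup\{a\}$, so the infinitely many nearby oscillations of the terms $\psi_k$ ($k\neq k_0$) aggregate to a genuine one-sided limit and do not contaminate the clean $[-1,1]$-oscillation contributed by $\psi_{k_0}$. I expect the verification that $\lim_{x\to a}(f-g)(x)=L$ via the $\epsilon/3$ splitting of head and tail of the series to be the only slightly technical step, everything else being formal. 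Finally, the symmetry $\Omega^-=\Omega^+$ in a) presumes $a$ to be an interior point of $I$; if $Q$ happens to contain one of the (at most two) endpoints of $I$, only the available one-sided cluster set survives, and one restricts attention to $a\in Q\cap\mathrm{int}(I)$.
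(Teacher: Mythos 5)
Your construction is exactly the one in the paper: the authors also set $f(x)=\sum_n 2^{-n}\sin\bigl(1/(a_n-x)\bigr)$ over an enumeration of $Q$, prove continuity by uniform convergence, and for clause b) take $g$ to be the single singular summand at $a$, so that the remainder extends continuously to $a$ and clause a) follows by the same decomposition you describe. The proposal is correct and essentially identical to the paper's proof (your closing remark about endpoints of $I$ is a fair observation that the paper glosses over, but it does not affect the argument).
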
 
 
 \begin{proof} Fix an enumeration $(a_n)_{n\geq 0}$ of  $Q$ and  set for all $n$, $g_n(x)=  2^{-n}\sin (\dfrac 1 {a_n-x})$. Then  $g_n: I\setminus\{a_n\} \to [-2^{-n} ,2^{-n} ]$  is  a continuous function and  
 $$\Omega g_n (a_n) =\Omega^- g_n (a_n) =\Omega^+g_n (a_n)=[-2^{-n} ,2^{-n} ].$$
Finally set  for $x\in P= I \setminus Q$:
$$f(x)= \sum_{n=0}^\infty  g_n(x).$$
Since  $ \abs{g_n(x)}\leq  2^{-n}$ for all $x\in P$ the series   is  uniformly convergent on $P$, hence the function $f$ is well defined and continuous on  $P$. Similarly for all $n$, the function $h: P\cup \{a_n\}\to \R $
 defined by   $h(x)= \sum_{k\neq n} g_k(x)$ is also continuous, so  
 $f(x)-g_n(x) =h(x)$ tends to $h(a_n)$ when $x$ tends to  $a_n$ and
\begin{align*}
\Omega f (a_n) &=\Omega^- f (a_n) =\Omega^+f (a_n)=[h(a_n)-2^{-n} , h(a_n)+2^{-n} ].\qedhere
\end{align*}
\end{proof}

It was pointed to us by E. Pol and R. Pol that the function $f$ used in the previous proof was   considered  by K. Kuratowski and W. Sierpinski
in  \cite{ks}.

\begin{definition}\label{defhyper}  We shall say that  a set    
$H\subset \R ^2$  is  an hypergraph if   $G\subset H\subset\adh G$ 
where $G$ is the graph of a  continuous function  $f: I \setminus Q\to \R $   satisfying clauses a) and b) of \hbox{Lemma \ref{hyper}} and  for all $a\in Q$,  the set $H(a)$ is dense  and of empty  {interior in $\adh G(a)$.} 
 We shall then  refer to the set $G$ as the graph part of $H$. 
\end{definition}

\begin{ssection} {\bf Remarks:}\label{rem} 
{\rm 
a) If $H$ is an hypergraph   and $I, Q, P, f, G$  are as  in  Definition~\ref{defhyper} we shall  write 
 $H\equiv (H,I, Q, P, f, G)$. Observe that  this tuple  is uniquely determined by $H$ since $I$ is the projection of $H$ on the first factor, $Q=I \setminus P$,  and $P$ is the set of all elements $x\in I$ such that $H(x)$ is a singleton  $\{f(x)\}$, which  defines $f$ and  $G=\Gr(f)$. 

b) \red{If $I'$ and $J'$ are  intervals such that $I'\cap P\not=\0$ and $f(I'\cap P)\subset J'$ then clearly $H'=H\cap (I'\times J')$ is an hypergraph too.
It follows that  any nonempty open subset $W$ of an hypergraph $H$ contains an hypergraph. Indeed since the graph part $G$ is dense in $H$  one  can pick some element $(a,f(a))$ in $W\cap G$ and  then by continuity of $f$ at $a$  find   intervals $I'\ni a$ and $J'\ni f(a)$ such that $f(I'\cap P)\subset J'$ and  $H\cap (I'\times J')\subset W$.}

c) By Baire Theorem  an hypergraph cannot be an $\Fs$ set of $\R^2$: indeed
if $H$ were the union of countably many compact sets $F_n$ then there would be some $n$ such that $ G\cap F_n$ has nonempty interior in $G$ hence some subinterval $J$ of $I$ such that $\forall x\in P\cap J\ (x,f(x))\in F_n$. And we would have $ \adh G \cap ( J\times\R)\subset F_n\subset H$. In particular for $x\in Q\cap J$, $H(x)$ would contain an interval and would not be zero-dimensional. 

But  if $Q'$ is any countable  dense subset of $\R$  then $\adh G \setminus Q\times Q'$ is    a $\Gd$ hypergraph. In particular there are $\Gd$ hypergraphs  which are relatively closed subsets of $\I^2 \setminus \Q^2$.}   
\end{ssection}

\medskip

In all the sequel $(H, I, Q, P, f, G)$ denotes an arbitrary given hypergraph and  $\pi:\R ^2\to \R $ denotes the projection mapping on the first factor.

\begin{theorem} \label{Hconnex} \red{If $H$  is  an hypergraph and  $E \subset H$ is such that $\pi(E)$ is a non trivial interval then the set $E$
 is   connected.
 In particular any hypergraph  is connected.}
 \end{theorem}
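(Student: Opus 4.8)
The plan is to argue by contradiction, first proving that the closure of the graph part is connected and then transferring a hypothetical separation of $E$ across the singular fibres by means of clause a). Throughout write $J=\pi(E)$ for the given interval and $C=G\cap(J\times\R)$ for the graph part over $J$; note $C\subset E\subset\adh{E}$. Before anything else I would record two consequences of the axioms that will be used repeatedly. First, at every $x_0\in P\cap J$ the continuity of $f$ forces the whole fibre $\adh G(x)$ of each neighbouring point $x$ to lie in an arbitrarily small vertical neighbourhood of $(x_0,f(x_0))$; here one uses that clauses a) and b) together make $f$ bounded near each point of $Q$ (if $g$ is the function of clause b), then $\adh G(a)=\Omega f(a)$ bounded forces $g$, hence $f$, bounded near $a$). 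Second, for $a\in Q$ interior to $J$ the set $\adh G(a)=\Omega^-f(a)=\Omega^+f(a)$ is a compact interval $[c,d]$ approached by $C$ from both sides, and, since $\adh G$ is closed, $\limsup_{x\to a}\sup\adh G(x)\le d$ and $\liminf_{x\to a}\inf\adh G(x)\ge c$, so the fibres of neighbouring points cannot overshoot $[c,d]$.

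Using these, I would next show that $\adh C$ is connected. Its fibres are singletons over $P$ and compact intervals over $Q$, hence connected, so any partition of $\adh C$ into two relatively clopen sets $A,B$ induces a genuine partition $J=\pi(A)\sqcup\pi(B)$. The first consequence above makes $\pi(A),\pi(B)$ open at points of $P$, and the overshoot control (the $\limsup$/$\liminf$ statement, via a tube argument around the compact fibre $\{a\}\times[c,d]$) makes them open at points of $Q$ as well. As $J$ is connected and $\pi(A),\pi(B)$ are disjoint, open and nonempty, one of them must be empty; thus $\adh C$ is connected.

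Now I would transfer this to $E$. Clause a) shows that every point of $E$ lying over a point of $Q$ is a two-sided limit of graph points, so $C$ is dense in $E$ and $\adh{E}=\adh C$. Assume $E=A\cup B$ is a separation into nonempty relatively open sets. Since $C$ is dense, both $A\cap C$ and $B\cap C$ are nonempty, and through the homeomorphism $x\mapsto(x,f(x))$ they correspond to a partition $J\cap P=\alpha\sqcup\beta$ into sets relatively open in $J\cap P$. Thickening by the first consequence yields disjoint open sets $\tilde\alpha,\tilde\beta\subset J$ with $\tilde\alpha\cap P=\alpha$ and $\tilde\beta\cap P=\beta$, whose union omits only a countable set $S\subset Q\cap J$. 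Because $J$ is connected, $S\nv$, and one can select a point $a\in S$ on the common frontier so that on one side of $a$ all nearby graph points belong to $A$.

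The crux is then localised at this single point $a$. By clause a), $\Omega^-f(a)=\Omega^+f(a)=\adh G(a)$, so the one-sided all-$A$ approach already fills the \emph{entire} segment $\{a\}\times\adh G(a)$ into $\adh{A}$, while the $\beta$-points approaching from the other side place a fibre point into $\adh{B}$; as $H(a)$ is dense in $\adh G(a)$ (which is exactly what is used for the statement $E=H$), the point of $E$ lying over $a$ is then a common limit of $A$ and of $B$, hence lies in $\adh A\cap\adh B\cap E=A\cap B=\0$, a contradiction. I expect \emph{this} fibre-matching step — arranging that the bridging point genuinely belongs to $E$, even though the fibres of $E$ over $Q$ may be only a dense meagre subset of the segment — to be the main obstacle, and it is precisely here that the two-sided equality in clause a), and the density of $H(a)$ in $\adh G(a)$, are indispensable. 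Taking $E=H$ (so that $\pi(E)=I$) gives the connectedness of any hypergraph as the announced special case.
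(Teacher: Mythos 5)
Your overall strategy --- turn a hypothetical separation $E=A\sqcup B$ into a two-colouring $J\cap P=\alpha\sqcup\beta$ of the graph points and derive a contradiction at a single well-chosen point of $Q$ via the two-sided equality $\Omega^-f(a)=\Omega^+f(a)=\adh G(a)$ of clause a) --- is viable and genuinely different from the paper's, which instead runs a transfinite induction on the Cantor--Bendixson derivatives of the countable closed set of points whose full fibre fits in neither piece of the separation. But there is a real gap exactly where you announce ``the crux''. To place a given $b\in E(a)$ into $\adh B$ you need a sequence $x_j\to a$ with $x_j\in\beta$ \emph{and} $f(x_j)\to b$; clause a) only supplies \emph{some} sequence in $P$ with $f(x_j)\to b$, and if the side of $a$ from which $\beta$ accumulates also contains $\alpha$-points, every sequence witnessing $b\in\Omega^+f(a)$ may lie entirely in $\alpha$ (the $\beta$-points on that side could all have $f$-values far from $b$, since $E(a)$ is only some subset of the interval $\adh G(a)$ and you cannot trade $b$ for a more convenient fibre point). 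So ``$a$ on the common frontier with one side all-$A$'' is not enough: you need a point $a$ both of whose punctured one-sided neighbourhoods are monochromatic at the level of graph points, with different colours, and you give no argument that such a point exists.

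Such a point does exist, and the fix stays inside your framework: set $T=\adh\alpha\cap\adh\beta$ (closures in $J$). Then $T\subset Q\cap J$ is closed, countable and nonempty (else $J=\adh\alpha\sqcup\adh\beta$ would be disconnected), and every component of $J\setminus T$ is monochromatic, since its traces on $\adh\alpha$ and on $\adh\beta$ are disjoint relatively closed sets covering it. A nonempty countable closed subset of an interval has an isolated point; at an isolated point $a$ of $T$ interior to $J$ the two adjacent punctured intervals each lie in a single component and must carry different colours because $a\in\adh\alpha\cap\adh\beta$, and then your limit argument closes: $b\in E(a)\subset\Omega^-f(a)=\Omega^+f(a)$ gives $(a,b)\in\adh A\cap\adh B\cap E=A\cap B=\emptyset$. (The degenerate case $T\subset\partial J$ forces one of $\alpha,\beta$ to consist of at most the endpoints of $J$, whose graph points are one-sided limits of the other colour, again a contradiction.) Two further remarks: the density of $H(a)$ in $\adh G(a)$, which you call indispensable, plays no role here --- only clause a) applied to the given $b\in E(a)$ does; and your preliminary proof that $\adh C$ is connected is never used afterwards and can be deleted. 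With the isolated-point selection supplied, your proof is correct and is in fact shorter than the paper's: the paper performs your local two-sided merging step at every level of the derivative hierarchy $F^{(\xi)}$ in order to propagate a colour across the whole bad set, whereas taking the bad set to be the common frontier of the two colour classes lets one step suffice.
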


 \begin{proof} \red{Replacing $I$ by the interval if necessary we may suppose that $\pi(E)=I$ (see Remark \ref{rem} b)).}
 So let  $U_0$ be an open subset of $ I\times \R $ such that $U_0\cap {\red E}\not=\0$ with   $\partial U_0\cap {\red E}=\0$  and set 
 $U_1= I\times \R  \setminus \adh {U_0}$. 
So $U_0\cap {\red E}$ and $U_1\cap {\red E}$ are two clopen subsets of ${\red E}$ and we shall prove that 
${\red E}\subset U_0$.

\red{Observe that since $\pi(E)=I$ then necessarily $E\supset G$} and set  $K={\adh G}$ (the closure of $G$ in $\R^2$). Since $K$ is a closed subset of $I\times \R$  with compact sections the set-valued mapping  $x\mapsto
 \td K(x)$ is u.s.c. (we recall that   $\td K(x)= \{x\}\times K(x) \subset K$)   hence  for $i\in \{0,1\}$ the set $V_i=\{x\in I: \ \td K(x) \subset U_i\}$  is open in $I$; and since $\td K(x)\not=\0$ for all $x\in I$ and  $U_0\cap   U_1=\0$ then $V_0\cap   V_1=\0$. Moreover since $G=\Gr (f) \subset  U_0\cup   U_1$   then  
$P \subset V_0\cup   V_1$. Hence $F=I \setminus (V_0\cup   V_1)\subset Q$ is a countable closed subset of $I$ and we shall denote, for any countable ordinal $\xi$,   by $F^{(\xi)}$  its  Cantor derivative of order 
$\xi$. 

 \medskip 
 
\begin{claim}\label{claim} For any  interval  $J\subset I \setminus F^{(\xi)}$ there exists $i\in \{0,1\}$ 
such that for all $x\in J$,   $\red{\td E}   (x) \subset U_i$.
\end{claim}
 
\begin{proof} We prove the Claim by induction on $\xi$:

 -- For $\xi=0$, $F^{(0)}=F$ so   $J\subset V_0\cup   V_1$  hence
$J\subset V_i$ for some $i$;  so for all $x\in J$, $\red{\td E}(x) \subset\td K(x)\subset U_i$.

 -- Assume now that the Claim  is proved for all $\eta<\xi$ and fix  an interval $J\subset I \setminus F^{(\xi)}$. Then  for all  $a\in J$ we can fix some $\eta_a<\xi$ such that $a\not\in F^{(\eta_a +1)}$ and  an open interval $J^a=]a-\e, a+\e[$ such that $J^a\cap F^{(\eta_a)}\subset \{a\}$. 
\medskip

 {\it  \underbar{Subclaim}: For all  $a\in  J$ there exists $i\in \{0,1\}$ and an open neighbourhood  $W$ of $a$  such that for all $x\in W$,  $\red{\td E}(x)\subset U_{i}$ .}
\medskip
 
 We first finish the proof of the Claim assuming   the  Subclaim: Set for   $i\in \{0,1\}$,
   $W_i=\{x\in J : \ \red{\td E}(x)  \subset U_i \}$. By the Subclaim   $(W_0, W_1)$ is an  open covering of $J$ and since  $\red{\td E}(x) \not=\0$  for all $x\in J$  and $U_0\cap   U_1=\0$ then $W_0\cap   W_1=\0$; hence  $J=W_i$ for some $i$, which proves the Claim for $\xi$.

  \vskip 2mm
 To prove the Subclaim we distinguish two cases:
\smallskip 

\noindent -- If $a\not\in F^{(\eta_a)}$ then the conclusion follows from  the induction hypothesis for  $\eta_a$.
\smallskip 

\noindent -- If $a\in F^{(\eta_a)}$  we distinguish three cases: 

  -- If $a$ is    a right  endpoint of $I$  then applying  the induction hypothesis for  $\eta_a$   with  $J=]a-\e, a[$ we can find  $i_{-}\in\{0,1\}$ such that
    $\red{\td E}(x)\subset U_{i_{-}}$ for all $x\in ]a-\e, a[$.  Now \red{pick} any $b\in {\red E}(a)\subset  \Omega f (a) =\Omega^- f (a)$; we can then find some sequence $(x_j)_j$  in $]a-\e, a[\cap P$ such that 
$(a,b)=\lim_j (x_j,f(x_j))$. Then  for all $j$,  $(x_j,f(x_j))\in  {\red E} \cap U_{i_{-}}$;  and since  ${\red E}\cap U_{i_{-}}$  is a closed subset of ${\red E}$   then $(a,b)\in  {\red E} \cap U_{i_{-}}$ too, hence  $\red{\td E}(a)\subset U_{i_{-}}$. 
 Similarly:
 
  -- If $a$  is a  left  endpoint of $I$ we can   find  $i_{+}\in\{0,1\}$ such that
   $\red{\td E}(x)\subset U_{i_{+}}$ for all $x\in [a, a+\e[$. 
 
  -- If $a$ is an interior point  we can find     $i_{-}$ and $i_{+}$ in $\{0,1\}$ such that  $\red{\td E}(x)\subset U_{i_{-}}$ for all $x\in ]a-\e, a]$ and 
  $\red{\td E}(x) \subset U_{i_{+}}$ for all $x\in [a, a+\e[$, hence  $\red{\td E}(a)\subset U_{i_{-}}\cap U_{i_{+}}$; and since   $\red{\td E}(a)$ is nonempty then necessarily   ${i_{-}}={i_{+}}=i$ and $\red{\td E}(x)\subset U_{i}$ for all  $x\in ]a-\e, a+\e[$.

This finishes the proof of   Claim \ref{claim}.\end{proof} 

 Then applying Claim \ref{claim}     to $J=I$ and $\xi$ such that $F^{(\xi)}=\0$ we get that ${\red E}\subset U_i$ with necessarily $i=0$ 
 since $U_0\cap {\red E}\not=\0$, which proves that the set ${\red E}$ is connected.
 \end{proof}

 \begin{proposition}\label{H=1} If  $H$ is  an hypergraph with    graph part $G$ then 
 $\dim (H)=\dim (\adh G)=1$.
 \end{proposition}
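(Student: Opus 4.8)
My plan is to establish the two inequalities $\dim(\adh G)\le 1$ and $\dim H\ge 1$; since topological dimension is monotone on subspaces and $G\subseteq H\subseteq\adh G$, these combine into $1\le\dim H\le\dim(\adh G)\le 1$, yielding both equalities at once. The lower bound is immediate from what precedes. The projection $\pi(H)=I$ is a non trivial interval, so by Theorem \ref{Hconnex} the space $H$ is connected, and since it contains the whole graph $G$ it is a connected separable metrizable space with more than one point. Such a space is not totally disconnected, hence not zero-dimensional, so $\dim H\ge 1$ and a fortiori $\dim(\adh G)\ge\dim H\ge 1$.

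For the upper bound I would exploit that $\adh G$ is thin in the vertical direction by projecting onto the \emph{second} factor. Let $\rho\colon\adh G\to\R$ be $\rho(x,y)=y$. Covering $I$ by countably many compact subintervals $K_n$, we have $\adh G=\bigcup_n(\adh G\cap(K_n\times\R))$ with each piece closed, so by the countable closed sum theorem it suffices to bound the dimension of $\adh G\cap(K\times\R)$ for $K$ compact; we may thus assume the first factor compact. Then $\rho$ is a closed map, being the restriction to the closed set $\adh G$ of the projection $K\times\R\to\R$ along a compact factor. The heart of the matter is that every fibre of $\rho$ is zero-dimensional: a fibre $\rho^{-1}(y)$ is homeomorphic, via the first coordinate, to the slice $S_y=\{x:(x,y)\in\adh G\}\subseteq\R$, and a subset of $\R$ fails to be zero-dimensional only if it contains a non trivial interval.

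Suppose some $S_y$ contained such an interval $J$. For every $x\in P\cap J$ one has $\adh G(x)=\Omega f(x)=\{f(x)\}$, so $(x,y)\in\adh G$ forces $f(x)=y$; since $P$ is co-countable and $Q$ is dense in $I$, we may choose $a\in Q\cap J$, and then every cluster value of $f$ at $a$ equals $y$, that is $\Omega f(a)=\{y\}$. This contradicts clause a) of Lemma \ref{hyper}, which demands that $\Omega f(a)$ be a non trivial interval. Hence no slice contains an interval, every fibre of $\rho$ is zero-dimensional, and $\rho(\adh G)\subseteq\R$ has dimension $\le 1$. By Hurewicz's theorem on closed mappings, $\dim(\adh G)\le\dim(\rho(\adh G))+\sup_{y}\dim(\rho^{-1}(y))\le 1+0=1$, which supplies the missing bound and completes the argument.

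The step I expect to require the most care is the zero-dimensionality of the fibres, since everything hinges on the interplay between $f$ being single-valued over the co-countable set $P$ and the genuine oscillation at the points of $Q$: one must be sure that $Q$ is dense in $I$ (equivalently, that $f$ is nowhere locally constant), for otherwise a horizontal slice over an interval free of $Q$ could be one-dimensional and the fibre argument would break down. I would therefore record this density as a standing property of hypergraphs before running the proof. Should one prefer to avoid the Hurewicz map theorem and stay with Theorem \ref{dim}, the same slice analysis can instead feed a decomposition of $\adh G$ into its (closed, and by the above zero-dimensional) slices at a countable dense set of heights together with the remaining points lying over the other heights; but checking that this remainder is zero-dimensional ultimately rests on exactly the same fibrewise fact.
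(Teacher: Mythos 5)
Your argument is correct, but it reaches the upper bound $\dim(\adh G)\le 1$ by a genuinely different route than the paper's. The paper works directly with the small inductive dimension: around each point of $\adh G$ it builds a neighbourhood basis of sets $W=(\,]a',a''[\,\times\,]b',b''[\,)\cap\adh G$ whose boundary in $\adh G$ splits into a vertical part over $\{a',a''\}$ --- finite, because the endpoints are chosen in $P$, where $\adh G(x)=\{f(x)\}$ --- and a horizontal part $(\R\times\{b',b''\})\cap\adh G$, which is closed with empty interior in the horizontal lines and hence zero-dimensional; this gives $\mathrm{ind}(\adh G)\le 1$ with no machinery beyond the boundary characterization of $\mathrm{ind}$. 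You instead project onto the second coordinate and invoke the Hurewicz theorem on closed maps with zero-dimensional fibres, a heavier (though standard) tool than the two dimension theorems the paper quotes. Both proofs ultimately rest on the identical key fact, that the horizontal slices $S_y=\{x:(x,y)\in\adh G\}$ have empty interior in $\R$: the paper disposes of it with the unproved assertion that ``$f$ is nowhere constant on $P$'', whereas you derive it from clause a) of Lemma \ref{hyper} together with the density of $Q$ in $I$ --- and you are right that this density is not stated in Definition \ref{defhyper} yet is tacitly used both here and in Corollary \ref{K=0}, so recording it as a standing hypothesis is a genuine improvement. Your projection viewpoint also has the merit of being exactly the mechanism that drives Theorem \ref{E=0} and Corollary \ref{dimE} later in the section. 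Two small repairs: the compact intervals $K_n$ should cover $\adh I$ rather than $I$ (if $I$ is not closed, $\adh G$ may contain points over its endpoints), and when the chosen $a\in Q\cap J$ happens to be an endpoint of $J$ you should argue with a one-sided cluster set, which clause a) permits since $\Omega f(a)=\Omega^{-}f(a)=\Omega^{+}f(a)$ --- or simply take $a$ in the interior of $J$.
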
 
 
  \begin{proof}     Since $H$ is connected then   $\dim (H)\geq 1$ and we only have to prove that 
  $\dim (\adh G)\leq 1$.  So fix $(a,b)\in \adh G$; given any neighbourhood $V$    of $(a,b)$ we can find two open intervals $I_a=]a',a''[$ and $J_b=]b',b''[$  such that $(a,b)\in  W=(I_a\times  J_b)\cap \adh G\subset V$.
And we can impose on  the endpoints   $a'$ and $a''$ to be in $P=I\setminus Q$  so that $A=(\{a', a''\}\times J_b)\cap \adh G= \{\bigl(a',f(a')\bigr),\bigl( a'', f(a'')\bigr)\}$
is finite. Also since the function $f$ is nowhere constant on $P$  the set {$B=(I\times\{b',b''\})\cap  \adh G$} is a closed subset of  $\adh G$  of empty interior  in $\R\times\{ b', b''\}$  hence
 $\dim B=0$.
And since  the  boundary  of $W$ in  $\adh G$ is contained in $A\cup B$ then $\dim (\adh G)\leq 1$.
 \end{proof}

\begin{theorem} \label{E=0} If $H$  is  an hypergraph and  $E \subset H$ is such that  $\dim(\pi(E))=0$ then $\dim(E)=0$.  
\end{theorem}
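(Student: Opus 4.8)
The plan is to prove the stronger statement that $E$ has small inductive dimension $\le 0$, i.e. that every point of $E$ admits arbitrarily small neighbourhoods that are clopen in $E$; since $E$ is separable metrizable this is equivalent to $\dim(E)=0$. Write $Z=\pi(E)$ and keep the data $(H,I,Q,P,f,G)$ from Definition~\ref{defhyper}. The basic resource is that \emph{vertical cuts are free}: since $\dim(Z)=0$ the set $Z$ has empty interior in $\R$, so $I\setminus Z$ is dense and every vertical line $\{x=x_0\}$ with $x_0\notin Z$ misses $E$; consequently, for any open $O\subset\R$ whose boundary avoids $Z$, the set $E\cap\pi^{-1}(O)$ is clopen in $E$. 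I would also record at the outset that all fibres of $E$ are zero-dimensional: for $a\in Z\cap P$ the fibre is the single graph point $(a,f(a))$, while for $a\in Z\cap Q$ one has $E(a)\subset H(a)$, which by Definition~\ref{defhyper} has empty interior in the interval $\Omega f(a)$ and is therefore zero-dimensional; moreover each single fibre $\{a\}\times E(a)=E\cap\pi^{-1}(\{a\})$ is closed in $E$.

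First I would dispose of the points lying over $Z\cap P$. If $a\in Z\cap P$ then $f$ is continuous at $a$, and the elementary cluster-set estimate (if $U$ is a small interval around $a$ then $\Omega f(x)$ stays within a prescribed distance of $f(a)$ for all $x\in U$) shows that a single vertical strip $E\cap\pi^{-1}((a',a''))$, with $a',a''\notin Z$ close to $a$, already confines the second coordinate near $f(a)$. Thus such a strip is a small clopen neighbourhood of $(a,f(a))$, and every point over $Z\cap P$ has a clopen neighbourhood basis.

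The real content is the points $(a,b)$ with $a\in Z\cap Q$, where $\Omega f(a)=[c,d]$ is a \emph{non-trivial} interval and $f$ oscillates around $a$, so that no vertical strip can confine the second coordinate near $b$. Here a naive horizontal cut fails: $f$ may attain a prescribed height $b_+$ at points of $Z$ arbitrarily close to $a$, so the line $\{y=b_+\}$ can meet $E$ however $b_+$ is chosen. My plan is instead to cut \emph{branch by branch}: choosing $b_-<b<b_+$ in $\Omega f(a)$ with $b_\pm\notin H(a)$ (possible since $H(a)$ has empty interior), I would excise every excursion of the graph above $b_+$ or below $b_-$ by inserting vertical cuts at abscissae \emph{outside} $Z$ that bound that excursion --- legitimate because $Z$ is nowhere dense, so each excursion interval contains cut-points off $Z$. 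The resulting ``staircase'' set $U$ would be clopen in $E$, would contain $(a,b)$, and would have oscillation in $y$ less than $b_+-b_-$, hence arbitrarily small.

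The main obstacle is that the excursions of $f$ above $b_+$ (or below $b_-$) need not be clean intervals: they may contain \emph{other} points of $Q$ inside them, and there are infinitely many of them accumulating at $a$. Making the branch-by-branch excision rigorous and uniform therefore requires an induction on the Cantor--Bendixson rank of the countable closed set of critical abscissae, exactly paralleling the induction carried out in the proof of Theorem~\ref{Hconnex}: at rank $0$ (a point isolated among the critical abscissae) the excursion is governed by continuity and the direct staircase applies, and at higher rank one first reduces the oscillation at the single point $a$ --- e.g. by subtracting a continuous function $g$ furnished by clause b) of Lemma~\ref{hyper}, for which $f-g$ converges at $a$ --- and then handles the remaining lower-rank critical points by the inductive hypothesis, passing to limits as in the proof of Theorem~\ref{Hconnex} using that any limit over $a$ at a height outside $E(a)$ falls outside $E$. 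Granting that these staircase neighbourhoods are genuinely clopen, every point of $E$ has a clopen neighbourhood basis, whence $\dim(E)=0$.
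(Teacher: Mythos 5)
Your reduction to small inductive dimension, the observation that vertical cuts at abscissae outside $Z=\pi(E)$ are free, and your treatment of points over $Z\cap P$ via the cluster-set estimate are all sound (the last is even slightly cleaner than the paper's, which handles that case through a density reduction). But the case $a\in Z\cap Q$ is the entire content of the theorem, and there your text is a plan rather than a proof: the sentence ``Granting that these staircase neighbourhoods are genuinely clopen'' concedes exactly the point at issue. Moreover the route you propose has a concrete obstruction. The ``countable closed set of critical abscissae'' on which you want to run a Cantor--Bendixson induction does not exist: the abscissae where the graph crosses the lines $y=b_\pm$ include the level sets $\{x\in P: f(x)=b_\pm\}$, which can be uncountable (Cantor-like) closed nowhere dense sets, and limits of critical points of $Q$ land on them. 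The induction in Theorem~\ref{Hconnex} works precisely because there $E\supset G$, which forces the critical set $F=I\setminus(V_0\cup V_1)$ into the countable set $Q$; here $\pi(E)$ has empty interior, so that mechanism is unavailable. Separately, the excursions above $b_+$ and below $b_-$ accumulate at $a$ itself, and nothing in your excision scheme prevents the excised strips from containing points of $E$ whose ordinates converge into $(b_-,b_+)$ as the abscissae converge to $a$ --- which would destroy openness of the staircase at $(a,b)$.

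The paper's proof supplies exactly the devices your sketch is missing. Using clause b) of Lemma~\ref{hyper} it writes $f=g+h$ with $h$ continuous at $a$ and $g$ continuous off $a$, covers $Z'=I'\cap\pi(E)\setminus\{a\}$ by three relatively open sets $U$, $U_-$, $U_+$ defined by inequalities on $g$, and --- crucially --- builds a margin $\abs{x-a}$ into the definitions of $U_\pm$ (e.g.\ $g(x)<b'-h(a)+\abs{x-a}$). It is this margin that rules out a sequence $(x_j,f(x_j))\in W_-\cup W_+$ converging to $(a,y)$ with $y\in\,]b',b''[\,$. Zero-dimensionality of $Z'$ is then used to refine $(U,U_-,U_+)$ to a clopen partition, in place of your branch-by-branch excision. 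Finally, the limit argument needs the approximating points to lie on the graph, so the paper first proves the statement when $E\cap G$ is dense in $E$ and then recovers the general case by writing $E$ as $\adh{E\cap G}\cap E$ together with the countably many closed zero-dimensional fibres $E\cap(\{a\}\times\R)$, $a\in Q$, and invoking Theorem~\ref{dim0}; your proposal never addresses how limit points coming from fibres over $Q$ are to be handled. As it stands the key clopen-neighbourhood construction is asserted, not established, so the proof is incomplete.
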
 
 
  \begin{proof}  We shall first   prove the theorem  under the additional assumption  that   $E\cap G$ is dense in $E$.

Fix   $(a,b)\in E$ and two intervals
  $I_a=]a-\e,a+\e[$ , $J_b=]b-\e,b+\e[$ for some
  $\e>0$. We have to find      a clopen neighbourhood $W$ of $(a,b)$   in $E$  such that  $W\subset  I_a\times  J_b$.   We distinguish the two cases:
\smallskip

-- If $a\in P$ then $(a,b)\in G$ and $b=f(a)$.    Then by the continuity of $f$  we can find two open intervals  
$I'$ , $J'$ such that   $(a,b)\in  I'\times  J' $,    $\adh{ I'\times  J' }\subset I_a\times  J_b$  and  $f(P\cap I')\subset J'$.
 Since $\pi(E)$ is of empty interior  we can also impose  on the endpoints of  $I'$ to be in $I'\setminus \pi(E)$ so that $I'\cap \pi(E)$ is clopen in $\pi(E)$. Then    $W= E\cap \pi^{-1}(I')$ 
 is a clopen subset of $E$ and  by the choice of $I'$ we have $W\cap G \subset  I'\times  J' $, hence by the density of $G\cap E$ in $E$ we have 
 $W \subset \adh{W\cap G}\subset \adh{ I'\times  J' }\subset I_a\times  J_b$.
\smallskip
 
-- If $a\in Q$ then applying  clause b)  of Lemma \ref{hyper} we can fix   two  continuous functions   $g: I\setminus \{a\}\to\R $   and  $h:  P\cup \{a\}\to\R $ 
  such that $f(x)=g(x)+h(x)$ for all $x\in  P$. Since $H(a)$ is of empty interior we can find  
  $b',b''\in \R \setminus  H(a)$ with  $b'<b<b''$ and $b''-b'< \dfrac \e 2$,  
  and   an  open interval $I'$ containing $a$ on which the oscillation of $h$ is $< \dfrac \e 2$; since $\pi(E)$ is zero-dimensional  then  it is of empty interior and we can impose on 
the endpoints of $I'$ to be in $I \setminus \pi(E)$.  Set $Z=I'\cap \pi(E)\setminus   \{a\}$; since $g$ is continuous  on $Z$ the three sets
\begin{align*}
 U&=\{x\in Z: \  b'-h(a)<g(x)<b''-h(a)\} \\
U_{-} &=\{x\in Z: \ \ g(x) < b'-h(a) +\abs{x-a}\} \\
U_{+}&=\{x\in Z: \  \ g(x) > b''-h(a) -\abs{x-a}\}
\end{align*}
 constitute an open covering of $Z$ which is zero-dimensional, hence we can find in $Z$
 a  clopen covering $(U', U'_-, U'_+)$ finer than $(U, U_-, U_+)$.

 We claim that  each of the following three sets:
 \begin{align*}
W&=\bigl(\{a\}\times ]b' ,b''[ \,\cup \,\pi^{-1}(U') \bigr)\cap E \\
  W_- &=\bigl( \{a\}\times ]-\infty,b'[ \,\cup \,\pi^{-1}(U'_-)  \bigr)\cap E \\
W_+&= \bigl( \{a\}\times ]b'', +\infty,[\, \cup\, \pi^{-1}(U'_+)  \bigr) \cap E
\end{align*}
 is open, hence clopen,  in  $E$.  We detail the proof  for  $W$,  the argument being  similar for the other two sets. 
Since $\pi^{-1}(U')\cap E$ is open in  $E$ we only need to show that any element of the form $(a,y)\in E$ with
$ y\in]b' ,b''[ $ is in the interior of $W$. So suppose by contradiction  that $(a,y)=\lim_j (x_j,y_j)$ with 
$(x_j,y_j)\in W_-\cup W_+$. Then necessarily $x_j\not=a$ for co-finitely many $j$'s; so we may suppose that
for all $j$, $x_j\in U_-\cup U_+$.  
Since $E\cap G$ is dense in $E$  we can also assume that $x_j\in P$ hence $y_j=f(x_j)$. 
Then 
$$\lim_jg(x_j)= \lim_jf(x_j)- \lim_jh(x_j)=y-h(a)\in ]b'-h(a) ,b''-h(a)[$$
hence for large $j$ we have: 
$$ b'-h(a)+\abs{x_j-a} \leq g(x_j)\leq b''-h(a) - \abs{x_j-a} $$
 which is a contradiction since $x_j  \in U_-\cup U_+$. 

Hence $W$ is a clopen subset of $E$ and clearly $(a,b)\in W$. To finish the proof we have to show  that 
$W\subset  I_a\times  J_b$. Again by the density of $G\cap E$ in $E$ it is sufficient to show 
$W\cap G\subset  I_a\times  J_b$. So let $(x,y)\in W\cap G$ then $y=f(x)=g(x)+h(x)$ 
and since the oscillation of  $h$ on $I'$ is $<\dfrac \e 2$ then
$$b'-\dfrac \e 2 \leq b'-h(a)+h(x) \leq y\leq b''-h(a)+ h(x)\leq b''+\dfrac \e 2  $$
and since  $b''-b'< \dfrac \e 2$  then $\abs{y-b}<\e$ so $(x,y)\in I_a\times  J_b$. 

This finishes the proof under the additional density condition and we go back now  to the general case.
Starting from an arbitrary set $E$ such that  $\dim(\pi(E))=0$ consider the closed subset 
$E'={\adh{E\cap G}}^E=\adh{E\cap G}\cap E$ of $E$.
Then ${E'\cap G}={E\cap G}$ and  ${\adh{E'\cap G}}^E = {\adh{E\cap G}}^E=E'$  so  $E'$ satisfies the density condition. Hence  $\dim(E')=0$ 
and since  $E= E'\cup\bigcup_{a\in Q}E\cap( \{a\}\times \R)$ and each $E\cap( \{a\}\times \R)$ is a  zero-dimensional closed subset of $E$, then $\dim(E)=0$.
  \end{proof}

 \red{ \begin{corollary} \label{dimE} Let $H$ be  an hypergraph.  For any  set  $E\subset H$    the following are equivalent:
 \begin{listeroman}
 \item $\dim(E)=0$.
 \item $\dim(\pi(E))=0$.
\end{listeroman}
\end{corollary}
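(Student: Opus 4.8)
The plan is to prove the equivalence of $(i)$ and $(ii)$ by combining the two theorems just established about hypergraphs, so that essentially no new work is required. The forward implication $(i)\impl(ii)$ is the easy half: the projection $\pi:\R^2\to\R$ is continuous, so it cannot raise topological dimension on any subspace, whence $\dim(\pi(E))\leq\dim(E)=0$. The reverse implication $(ii)\impl(i)$ is precisely the content of Theorem~\ref{E=0}, which asserts that if $H$ is a hypergraph and $E\subset H$ satisfies $\dim(\pi(E))=0$ then $\dim(E)=0$. So the corollary is just the conjunction of these two observations.

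Concretely I would write: the implication $(ii)\impl(i)$ is Theorem~\ref{E=0}. For $(i)\impl(ii)$, recall that a continuous map does not increase dimension, and since $E\subset H\subset\R^2$ and $\pi$ is continuous, $\pi\rst{E}:E\to\pi(E)$ is a continuous surjection; as $\dim(E)=0$, the image $\pi(E)$ is a continuous image of a zero-dimensional separable metrizable space. One must be slightly careful here, since continuous images of zero-dimensional spaces need not be zero-dimensional in general. The clean way to argue is that for subspaces of $\R$, being zero-dimensional is equivalent to having empty interior, and $\pi(E)$ cannot contain an interval: if it did, then $E$ would surject onto a non-trivial interval, and by Theorem~\ref{Hconnex} the set $E$ would be connected, contradicting $\dim(E)=0$ (a connected space with more than one point has dimension $\geq1$). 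This is the step that deserves the most care, so I expect the main (very minor) obstacle to be justifying $(i)\impl(ii)$ cleanly rather than invoking a false general monotonicity principle.

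Thus the proof reduces to: $(ii)\impl(i)$ is Theorem~\ref{E=0}, and $(i)\impl(ii)$ follows because $\pi(E)$ must have empty interior in $\R$ — otherwise $\pi(E)$ contains a non-trivial interval, forcing $E$ to be connected by Theorem~\ref{Hconnex} and hence of dimension $\geq1$, contradicting $(i)$ — so $\pi(E)\subset\R$ is zero-dimensional. I would present this as a short two-sentence proof, citing Theorem~\ref{E=0} for one direction and Theorem~\ref{Hconnex} for the other.

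\begin{proof}
The implication $(ii)\impl(i)$ is exactly Theorem \ref{E=0}. Conversely, assume $(i)$ and suppose for contradiction that $\pi(E)$ is not zero-dimensional. As a subset of $\R$, this means $\pi(E)$ has nonempty interior, so $\pi(E)$ contains a non-trivial interval; replacing $E$ by $E\cap\pi\m(J)$ for such an interval $J$ we may assume $\pi(E)$ is a non-trivial interval. Then by Theorem \ref{Hconnex} the set $E$ is connected, and being non-trivial it satisfies $\dim(E)\geq1$, contradicting $(i)$. Hence $\pi(E)$ has empty interior in $\R$, so $\dim(\pi(E))=0$.
\end{proof}
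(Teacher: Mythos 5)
Your proof is correct and follows essentially the same route as the paper: the authors likewise cite Theorem \ref{E=0} for $(ii)\Rightarrow(i)$ and, for the converse, observe that a non-zero-dimensional $\pi(E)$ would contain a non-trivial interval $J$, so that $E\cap (J\times\R)$ would be a non-trivial connected subset of $E$ by Theorem \ref{Hconnex}, a contradiction. Your remark that one cannot simply invoke ``continuous images do not raise dimension'' is well taken, and your final write-up avoids that pitfall exactly as the paper does.
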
 
 \begin{proof} Suppose that $\dim(E)=0$. If $\pi(E)$ were not zero-dimensional   then $\pi(E)$ would contain a non trivial interval $J$  hence by Theorem \ref{Hconnex} the set $E'=E\cap J\times \R$ would be a non trivial connected subset of $E$, a contradiction.
The converse implication follows from Theorem \ref{E=0}.
  \end{proof}

 \begin{corollary} \label{dimF}   For a  closed subset  $F$  of     an hypergraph  $H$ the following are equivalent:
 \begin{listeroman}
 \item $F$ is of empty interior  in $H$.
 \item $\dim(F)=0$.
 \item $\dim(\adh{\pi(F)})=0$.
\end{listeroman}
\end{corollary}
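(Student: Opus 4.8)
The plan is to prove the cyclic chain $(iii)\Rightarrow(ii)\Rightarrow(i)\Rightarrow(iii)$, leaning on the equivalence already recorded in Corollary \ref{dimE} (namely $\dim(F)=0\iff\dim(\pi(F))=0$, valid for \emph{every} subset of $H$) together with the elementary observation that a subset of $\R$ is zero-dimensional exactly when it contains no non-trivial interval. The implication $(iii)\Rightarrow(ii)$ is then immediate: since $\pi(F)\subset\adh{\pi(F)}$ and dimension is monotone on subspaces, $\dim(\adh{\pi(F)})=0$ forces $\dim(\pi(F))=0$, whence $\dim(F)=0$ by Corollary \ref{dimE}.

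For $(ii)\Rightarrow(i)$ I would argue by contraposition. If $F$ had nonempty interior in $H$, then by Remark \ref{rem} b) this open set would contain an hypergraph $H'$, and by Proposition \ref{H=1} we would get $\dim(F)\geq\dim(H')=1$, contradicting $(ii)$. So a zero-dimensional closed $F$ has empty interior.

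The substance lies in $(i)\Rightarrow(iii)$, which I would prove in contrapositive form: assuming $\adh{\pi(F)}$ contains a non-trivial interval $J$, I want to exhibit a nonempty open subset of $H$ contained in $F$. The point where closedness of $F$ is used is the following \emph{Key Lemma}: if $F$ is closed in $H$ then $\adh{\pi(F)}\cap P\subset\pi(F)$. Granting it, every $a\in J\cap P$ lies in $\pi(F)$; and since $a\in P$ forces $H(a)=\{f(a)\}$ (because $G\subset H\subset\adh G$ and $\adh G(a)=\Omega f(a)=\{f(a)\}$ by continuity of $f$ on $P$), we obtain $(a,f(a))\in F$, i.e. $F\supset G\cap(J\times\R)$. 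Taking closures in $H$ and using that the $\R^2$-closure of the graph over the open interval $J_0$ (the interior of $J$) already contains $\adh G\cap(J_0\times\R)$ — hence, intersecting with $H\subset\adh G$, all of $H\cap(J_0\times\R)$ — we conclude $F\supset H\cap(J_0\times\R)$. As $P$ is co-countable this set is nonempty and open in $H$, so $F$ has nonempty interior and $(i)$ fails, as wanted.

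It remains to prove the Key Lemma, which I regard as the main obstacle. Fix $a\in\adh{\pi(F)}\cap P$, a sequence $x_j\in\pi(F)$ with $x_j\to a$, and points $(x_j,y_j)\in F$. Since $f$ is continuous on $P$, for a small open interval $I'\ni a$ one has $f(P\cap I')\subset\,]f(a)-\e,f(a)+\e[$; passing to cluster values this gives $\adh G\cap(I'\times\R)\subset I'\times[f(a)-\e,f(a)+\e]$, so the fibres of $H$ over $I'$ are uniformly bounded. Hence $(y_j)$ is bounded, a subsequence of $(x_j,y_j)$ converges in $\R^2$ to some $(a,y)$, and $y\in\adh G(a)=\{f(a)\}$ yields the limit $(a,f(a))\in G\subset H$. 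As $F$ is closed in $H$, this limit lies in $F$, so $a\in\pi(F)$. The delicate point is precisely this extraction of a limit that is \emph{genuinely in $H$} and not merely in the $\R^2$-closure $\adh G$: it succeeds only because over a point $a\in P$ the fibre of $\adh G$ collapses to the single point $f(a)$, which forces the limit back into $H$.
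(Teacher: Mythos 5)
Your proof is correct, and every ingredient in it also appears in the paper's proof; only the packaging differs. You run the cycle as $(iii)\Rightarrow(ii)\Rightarrow(i)\Rightarrow(iii)$ while the paper runs $(i)\Rightarrow(ii)\Rightarrow(iii)\Rightarrow(i)$, and the correspondence is exact: your Key Lemma $\adh{\pi(F)}\cap P\subset\pi(F)$ is the paper's observation (in its $(ii)\Rightarrow(iii)$) that $\adh F\setminus F\subset Q\times\R$ --- which the paper gets in one line from the fact that $\adh G(a)=\{f(a)\}$ for $a\in P$, whereas your subsequence extraction spells out the local boundedness that also underlies the paper's passing remark that $\adh{\pi(F)}=\pi(\adh F)$ ``by compactness''; your step ``a non-trivial interval under $F$ forces $F\supset H\cap(J_0\times\R)$'' is the paper's $(i)\Rightarrow(ii)$; and your ``nonempty interior yields a sub-hypergraph, hence dimension $1$'' is the paper's $(iii)\Rightarrow(i)$ (you quote Proposition \ref{H=1}, the paper quotes Theorem \ref{Hconnex}; both suffice). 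Neither route is more elementary or more general than the other; yours is a sound rearrangement of the same three facts.
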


 \begin{proof}   $(i) \Rightarrow (ii)$: Suppose that $F$ is of empty interior  in $H$.   
  If $\pi(F)$ were not zero-dimensional   then $\pi(F)$ would contain a nonempty  open  interval $ J$. Then for all $x\in J\cap P$, $(x,f(x))\in F$ hence  $ F\supset \adh{ \{(x,f(x)) :x\in J\cap P\}}$ and it follows that $F$  contains the  nonempty open subset $ H\cap (J\times\R)$ of $H$, a contradiction.

\sk

 $(ii) \Rightarrow (iii)$:  Observe first that by compactness $\adh{\pi(F)}= \pi( \adh F)$. Also since the grah part $G$ is  a closed subset of  $P\times\R $   then necessarily  $\adh F \setminus F \subset Q \times \R  $   so $\pi(\adh F)\setminus\pi(F)\subset Q$.

 Suppose now that dim($F$)=0.
If    $\pi(\adh F)$ were not   zero-dimensional then it  would be of nonempty  interior in $\R $ hence would contain some open interval  $I_0$  and then   $I_0\setminus Q\subset \pi(F)$, hence  $F$ would   contain the closure in $H$ of the graph of  $f_{\vert I_0\cap P}$ which 
is an hypergraph too, and this is impossible since $F$ is zero-dimensional. 

\sk

 $(iii) \Rightarrow (i)$: Suppose that $\dim(\adh{\pi(F)})=0$.   Then by Corollary \ref{dimE} $\dim(F)=0$ and if $F$ were  not of empty interior then by Remark \ref{rem} b)  $F$ would contain  an hypergraph $H'$, and   then  we would have that $\dim(H')=0$ which contradicts Theorem ~\ref{Hconnex}.   \end{proof}  
}  
  
  \begin{corollary} \label{F0(H)}  
If $H$ is a   $\Gd$   hypergraph then  $\FF_0(H)$ is  a $\Gd$ subset of $\FF(H)$.
  \end{corollary}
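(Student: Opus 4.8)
The plan is to combine the purely topological characterisation of zero-dimensionality inside a hypergraph (Corollary \ref{dimF}) with the rigidity result of Kechris--Louveau--Woodin as packaged in Corollary \ref{KLWF(X)}. Since $H$ is a $\Gd$ subset of $\R^2$ it is Polish, hence analytic, so Corollary \ref{KLWF(X)} applies to $X=H$. By part a) of that corollary it is therefore enough to prove that $\FF_0(H)$ is $\ana$; I will in fact show it is Borel, and then let the dichotomy do the rest.

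By Corollary \ref{dimF}, for $F\in\FF(H)$ one has $F\in\FF_0(H)$ if and only if $F$ has empty interior in $H$. So I fix a countable basis $(U_n)$ of \emph{nonempty} open subsets of $H$ and rewrite this condition as: for every $n$, $U_n\not\subset F$. The next step is to compute the complexity of each single condition $U_n\not\subset F$. Since $F$ is closed, any witness of $U_n\not\subset F$ lies in the nonempty open set $U_n\setminus F$, which therefore contains some basic $U_m$ with $U_m\subset U_n$ and $U_m\cap F=\0$; conversely, as basis elements are nonempty, such a $U_m$ forces $U_n\not\subset F$. Hence $\{F:\ U_n\not\subset F\}=\bigcup\{\,\{F:\ F\cap U_m=\0\}:\ U_m\subset U_n\,\}$, and by \ref{F(X)}.b) each set $\{F:\ F\cap U_m=\0\}$ is closed, so $\{F:\ U_n\not\subset F\}$ is $\Fs$.

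Intersecting over $n$ gives $\FF_0(H)=\bigcap_n\{F:\ U_n\not\subset F\}$, a countable intersection of $\Fs$ sets, hence a $\borm 3$ and in particular $\ana$ subset of $\FF(H)$. Corollary \ref{KLWF(X)}.a) then immediately upgrades this to the desired conclusion that $\FF_0(H)$ is a $\Gd$ subset of $\FF(H)$.

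The only genuine obstacle is that the quantifier pattern ``$\forall n\,\exists m$'' produced by the empty-interior reformulation yields directly only a $\borm 3$ bound, and no elementary manipulation of the Effros Borel structure will lower this to $\Gd$ by hand. The real content is therefore not the complexity count above but the fact that $\FF_0(H)$ is a $\s$-ideal of $\FF(H)$ (Theorem \ref{dim0}) closed under compact unions, so that the Kechris--Louveau--Woodin dichotomy collapses any Borel (indeed $\ana$) value of its complexity down to $\Gd$; the whole argument hinges on being entitled to quote Corollary \ref{KLWF(X)}.
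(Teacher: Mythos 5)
Your proof is correct, and it reaches the conclusion by a route that differs from the paper's in its middle step. Both arguments start from Corollary \ref{dimF} and both end by invoking the Kechris--Louveau--Woodin machinery, but you use clause $(i)$ (empty interior in $H$) and witness non-inclusion $U_n\not\subset F$ by basic \emph{open} sets $U_m$ disjoint from $F$; since $\{F:\ F\cap U_m=\0\}$ is merely closed, the pattern $\forall n\,\exists m$ gives only a $\borm 3$ bound, and the entire collapse to $\Gd$ is delegated to Corollary \ref{KLWF(X)}.a). The paper instead uses clause $(iii)$ (that $\adh{\pi(F)}$ has empty interior in $\R$), works in the explicit compactification $[-\infty,+\infty]^2$ of $H$, and witnesses the empty-interior condition by the \emph{compact} sets $\hat\pi^{-1}\bigl(\adh{U_m}\bigr)$; as $\{F:\ F\cap K=\0\}$ is open for $K$ compact, this yields a genuine $\Gd$ bound directly for that one admissible topology, and Corollary \ref{KLWI} is then needed only to transfer the conclusion to an arbitrary admissible topology (where a first Baire class isomorphism a priori degrades $\Gd$ to $\borm 3$). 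So both proofs ultimately pass through the same $\sigma$-ideal dichotomy; yours is a bit shorter and avoids the compactification bookkeeping, while the paper's buys an explicit admissible topology in which $\FF_0(H)$ is seen to be $\Gd$ by hand before any dichotomy is invoked. Your closing observation that the real content is the $\sigma$-ideal structure of $\FF_0(H)$ (Theorem \ref{dim0}) together with closure under compact unions is accurate.
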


 \begin{proof} 
  Set $\hat R=[-\infty, +\infty]$ and  fix   a countable basis $(U_n)_n$ of open sets in $\hat R$.
 Observe that for  any set $A\subset \R$ the set
 $\adh A^\R$ is of empty interior in $\R$ if and only if the set $\adh A^{\hat\R}$ is of empty interior in $\hat\R$.
 
Embed $H$ in the compact space $\hat R\times \hat R$ and consider on $\FF(H)$ the corresponding admissible topology $\hat\tau_0$.
Then by  Corollary~\ref{dimF}  for $F\in\FF(H)$ we have:
\begin{align*}
F\in\FF_0(H) &\iff \adh {\pi(F)}^{\R} \text{ is of empty interior in } {\R}  \\
 &\iff \adh {\pi(F)}^{\hat\R} \text{ is of empty interior in } {\hat\R}  \\
&\iff \forall n, \ \exists m , \ \adh{U_m}^{\hat\R}\subset U_n,  \  \pi(F) \cap \adh{U_m}^{\hat\R} =\0\\
 &\iff \forall n, \ \exists m , \ \adh{U_m}^{\hat\R}\subset U_n,  \  F \cap \hat\pi^{-1}(\adh{U_m}^{\hat\R})=\0
\end{align*}
where $\hat\pi$ denotes the projection mapping
from $\hat R\times \hat R$ onto the first factor. 
Since $\hat\pi^{-1}(\adh{U_m}^{\hat\R})$ is compact then
by \ref{F(X)} b) condition  ``$F \cap \hat\pi^{-1}(\adh{U_m}^{\hat\R})=\0$" on $F$ is open,   hence 
$\FF_0(H)$ is  a  $\borm 2$ subset of $(\FF(H),\hat\tau_0$).

Now if $\hat \tau$ is any   admissible topology on $\FF(H)$ then $\hat \tau$ is first Baire class isomorphic to $\hat \tau_0$, so      $\FF_0(H)$  is a     $\borm 3$ subset of $(\FF(H),\hat\tau$),  hence  by Corollary \ref{KLWI} $\FF_0(H)$  is actually a     $\borm 2$ subset of $(\FF(H),\hat\tau$).  
\end{proof}  
 
  \begin{corollary}\label{K=0} Any compact subset    of    an hypergraph    is of empty interior hence
 zero-dimensional. 
  \end{corollary}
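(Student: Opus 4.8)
The plan is to reduce the statement to the equivalences already proved and then exploit the fact that the ``bad'' sections $H(a)$ for $a\in Q$ are genuinely smaller than the full cluster intervals $\adh G(a)$ trapped inside a compact set. Since a compact subset $K$ of the (Hausdorff) space $H$ is in particular closed in $H$, Corollary \ref{dimF} applies to $K$, so the two asserted conclusions (that $K$ is of empty interior in $H$ and that $\dim(K)=0$) are equivalent. It therefore suffices to prove that $K$ has empty interior in $H$.

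First I would argue by contradiction, assuming that the interior $W$ of $K$ in $H$ is nonempty. By Remark \ref{rem} b) the nonempty open set $W$ contains a hypergraph $H'=H\cap (I'\times J')$, with $f(I'\cap P)\subset J'$, whose graph part $G'=G\cap (I'\times J')$ is the graph of $f$ restricted to $I'\cap P$. Since $G'\subset H'\subset W\subset K$ and $K$, being compact, is closed in $\R^2$, I obtain $\adh{G'}\subset K\subset H$.

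The contradiction is then read off on a single section. Pick a point $a\in Q$ interior to $I'$; such a point exists because $H'$ is itself a hypergraph and hence has oscillation points densely in $I'$. On the one hand, as $a$ is interior to $I'$, any sequence in $P$ tending to $a$ is eventually in $I'$, so the cluster set of $G'$ at $a$ is the full cluster set of $f$: using $f(I'\cap P)\subset\adh{J'}$ one gets $\adh{G'}(a)=\Omega f(a)=\adh G(a)$, which by clause a) of Lemma \ref{hyper} is a non-trivial interval. On the other hand $\adh{G'}\subset K\subset H$ gives $\adh G(a)=\adh{G'}(a)\subset K(a)\subset H(a)\subset\adh G(a)$, whence $H(a)=\adh G(a)$. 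This contradicts Definition \ref{defhyper}, by which $H(a)$ is of empty interior in $\adh G(a)$, whereas a non-trivial interval has nonempty interior in itself. Hence $W=\0$, so $K$ has empty interior, and by Corollary \ref{dimF} we conclude $\dim(K)=0$.

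The routine verifications are the two identifications $\adh{G'}(a)=\Omega f(a)$ and $\adh G(a)=\Omega f(a)$, both immediate from the definition of the cluster sets together with the inclusion $f(I'\cap P)\subset\adh{J'}$. The only genuinely delicate point---the one I would flag as the crux---is the choice of the interior witness $a$ \emph{in $Q$}: this is exactly where one uses that a hypergraph has densely many oscillation points (equivalently, that $P$ contains no subinterval), which is precisely what forces the meagre section $H(a)$ to clash with the full cluster interval that compactness of $K$ has dragged inside $H$.
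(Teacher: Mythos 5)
Your proof is correct and follows essentially the same route as the paper's: both arguments use compactness to drag the closure of the graph over a subinterval $I'$ of the (assumed nonempty) interior of $K$ into $K\subset H$, and then derive a contradiction at a point $a\in Q\cap I'$ from the requirement in Definition \ref{defhyper} that $H(a)$ have empty interior in the non-trivial interval $\adh G(a)$. The only cosmetic difference is that the paper exhibits a single point of $\adh G\setminus H$ as a limit of points of $K$, whereas you capture the whole section $\adh{G'}(a)=\adh G(a)$ at once; and both proofs, yours and the paper's, implicitly use that $Q$ meets every nonempty open subinterval of $I$, which you rightly flag as the crux.
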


 \begin{proof}  Let  $K$ be a compact subset of $H$ and suppose that $U\times V$ is an open subset of $\R^2$ such that  $\0\not=(U\times V)\cap H\subset K$.   Pick any element $a \in U  \cap Q$; since $\adh G(a)$ is a non trivial interval we can also   pick some element   $b\in V\cap  \adh G(a)\setminus H(a)$, so $(a,b)  \in  
 (\adh G \setminus H) \cap (U\times V)$.  We can then find $(a_j,b_j)\in G\cap   (U \times V) $ such that  $(a,b)=\lim_j(a_j,b_j)$;   and since $G$ is a graph then    $H(a_j)=K(a_j)=\{b_j\}$ hence   $(a_j,b_j)\in K$  and so    $(a,b)\in K\subset H$, which is a contradiction.   
 
 Hence $K$  is  closed and of empty interior and the last part of the conclusion  follows then from
Corollary~\ref{dimF}.
 \end{proof}  
 
 \begin{corollary} 
An    hypergraph  is a connected space which contains no \hbox{non trivial} continuum.
 \end{corollary}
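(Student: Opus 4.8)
The plan is to separate the statement into its two assertions and dispatch each by invoking results already proved. Connectedness of $H$ is immediate from Theorem \ref{Hconnex}: applying that theorem with $E=H$, for which $\pi(E)=I$ is a non trivial interval, yields at once that $H$ is connected (this is in fact the ``in particular'' clause of Theorem \ref{Hconnex}).

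For the second assertion I would argue by contradiction. Suppose $C\subset H$ is a non trivial continuum, that is, a compact connected subset containing at least two distinct points $x\neq y$. Since $C$ is a compact subset of the hypergraph $H$, Corollary \ref{K=0} applies and gives $\dim(C)=0$. But a zero-dimensional space containing two distinct points admits a clopen set $U$ with $x\in U$ and $y\notin U$; then $C=U\cup(C\setminus U)$ exhibits $C$ as the disjoint union of two nonempty relatively clopen pieces, contradicting the connectedness of $C$. Hence $H$ can contain no non trivial continuum.

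There is essentially no genuine obstacle here: the whole force of the statement is carried by Corollary \ref{K=0}, which guarantees that every compact subset of a hypergraph is zero-dimensional, combined with the elementary topological fact that a connected zero-dimensional space has at most one point. The only point requiring care is the reading of ``non trivial'': a continuum is understood to have more than one point, and the argument shows precisely that a hypergraph contains no such set, so that every continuum contained in $H$ is reduced to a single point (or empty).
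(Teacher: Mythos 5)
Your proof is correct and follows exactly the route the paper intends: the paper states this corollary without proof precisely because it is immediate from Theorem \ref{Hconnex} (connectedness) together with Corollary \ref{K=0} (every compact subset is zero-dimensional, so a connected compact subset with two points is impossible). Your filling-in of the elementary step that a zero-dimensional connected space has at most one point is exactly the intended argument.
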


Observe that an   hypergraph $H$ cannot be decomposable  into  a countable union  of  closed  zero-dimensional or compact subsets  since by Corollary \ref{K=0} 
it would  be a countable union  of  closed  zero-dimensional subsets hence by Theorem \ref{dim}  $H$  itself would be zero-dimensional which contradicts Proposition \ref{H=1}.


\end{section}

\begin{section}{The maximum descriptive class of $\FF_0(X)$ when $X$ is Polish}  \label{maxpol}

We can now prove the following complement to Corollary \ref{F0=GSxi=3-complete}

\begin{theorem} \label{F0=GSxi=2-complete} For any $\Gd$ hypergraph  $H$ there exists a $\borm 2$ subset $X$ of\/  $2^\wo\times H$ such that   $\FF_0(X)$ is      $\Game \bora  2$-hard. \end{theorem}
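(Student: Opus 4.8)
The plan is to use the reduction machinery already established to pass to the one–dimensional shadow of $X$, and then to fold Player~II's strategy space into $H$ itself. By Lemma \ref{Univ} fix a $\bora 2$ set $U\subset 2^\wo\times\wo^\wo$ for which $\Game U$ is $\Game\bora 2$-complete, and let $\Sigma_{\rm I}\approx\Sigma_{\rm II}\approx\wo^\wo$ be the associated strategy spaces. Since $2^\wo$ is zero-dimensional Polish, Corollary \ref{E0} shows it suffices to produce a $\borm 2$ set $X\subset 2^\wo\times H$ with
\[
\{\a\in 2^\wo:\ \dim(X(\a))=0\}=\Game U,
\]
for then $\Phi_{\hat X}$ reduces the $\Game\bora 2$-complete set $\Game U$ to $\FF_0(X)$, making the latter $\Game\bora 2$-hard. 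Because $H$ is a hypergraph, Corollary \ref{dimE} converts this into a statement about the projection $\pi:H\to I$ onto the base interval: $\dim(X(\a))=0$ iff $\pi(X(\a))$ is nowhere dense in $I$. Thus the goal reduces to making $\pi(X(\a))$ nowhere dense exactly when Player~I wins $\bG_{U(\a)}$.

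First I would check that $H$ fulfils every hypothesis of Theorem \ref{F0=GSxi-complete}, which already delivers the analogous statement with the larger ambient space $\wo^\wo\times H$. The space $H$ is perfect, non zero-dimensional (Proposition \ref{H=1}) and Polish, and its graph part $G$ is a dense $\Gd$ subset. For any dense $D\subset G$ one has $\pi(H\setminus D)=I\setminus\pi(D)$, which is co-dense, whence $\dim(H\setminus D)=0$ by Corollary \ref{dimE}. Furthermore $G$ is nowhere relatively $\sigma$-compact in $H$: by Corollary \ref{K=0} a compact $K\subset H$ projects to a compact nowhere dense subset of $I$ (else $K$ would contain a sub-hypergraph, contradicting \ref{H=1}), so every $\sigma$-compact subset of $H$ has meager projection, whereas $\pi(W\cap G)$ is relatively co-countable in a subinterval for each basic open $W$ meeting $G$. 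Hurewicz's theorem then furnishes, in each such $W\cap G$, a copy $G_n$ of $\wo^\wo$ that is \emph{closed in $H$}, with homeomorphisms $h_n:G_n\to\Sigma_{\rm I}$; these encode Player~I's strategies horizontally, a full copy of $\Sigma_{\rm I}$ sitting in every basic neighbourhood, which is what captures the existential quantifier over~I.

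The genuinely new step is to suppress the auxiliary $\wo^\wo$ factor that produced the ambient $\wo^\wo\times H$ in Theorem \ref{F0=GSxi-complete}, and for this I would exploit the second dimension of $H\subset\R^2$. Player~II's strategies are to be encoded \emph{vertically}: for each $a$ in the countable set $Q$ the section $H(a)$ is dense and of empty interior in a nontrivial interval, so it contains a copy of $\Sigma_{\rm II}\approx\wo^\wo$. One then defines $X$ through a $\Gd$ clause of the shape ``for the pair $(\s,\tau)$ decoded from the horizontal and vertical coordinates of the point, $\s\star\tau\notin U(\a)$'', engineered so that a winning $\tau^\ast$ for Player~II keeps alive, over a whole subinterval, enough vertical points (together with the surrounding graph) that $\pi(X(\a))$ contains an interval, while a winning $\s^\ast$ for Player~I deletes codes along a dense subset of $I$, leaving $\pi(X(\a))$ nowhere dense. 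Since $\tau$ now appears only as the actual vertical coordinate of a point of $H$, the set $X$ lies in $2^\wo\times H$; and since the $G_n$ are closed in $H$, the clauses locating a point in a $G_n$ are open, so $X$ remains $\borm 2$.

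The hard part will be making this coupling precise. The existential over Player~I's strategies needs a continuum of codes lying densely in the base, which only the graph over the uncountable $P$ can carry; but the universal over Player~II's strategies needs, for each tested I-strategy, a full vertical copy of $\Sigma_{\rm II}$, and over $P$ the sections are single points. These two demands cannot be met by any static assignment, and reconciling them forces one to \emph{unravel} the game $\bG_{U(\a)}$ along the interval — spreading II's successive moves through the countably many vertical sections over $Q$ and through the oscillation of $f$ near the points of $Q$ — so that a winning $\tau^\ast$ illuminates a genuine subinterval's worth of sections rather than a single point, whereas a winning $\s^\ast$ forces nowhere-density. Verifying simultaneously that this scheme produces the exact dimension jump and stays within $\borm 2$ is the delicate core of the argument; Corollary \ref{dimE} is the device that lets one certify the jump by inspecting only the one-dimensional shadow $\pi(X(\a))$.
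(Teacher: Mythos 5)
Your first two paragraphs already constitute the paper's entire proof: one simply checks that a $\Gd$ hypergraph $H$ with graph part $G$ satisfies all hypotheses of Theorem \ref{F0=GSxi-complete} for $\xi=2$ ($H$ perfect, Polish, non zero-dimensional by Proposition \ref{H=1}; $G$ a dense $\Gd$; $H\setminus D$ zero-dimensional for every dense $D\subset G$ via Theorem \ref{E=0}; $G$ nowhere relatively $\sigma$-compact, which you derive from Corollary \ref{K=0} and the paper derives from the oscillation of $f$ --- both routes are fine), and that theorem delivers the conclusion. The distinction you then draw between the ambient spaces $\wo^\wo\times H$ and $2^\wo\times H$ is a phantom: the set $X$ produced by Theorem \ref{F0=GSxi-complete} is $\borm 2$ in the Polish space $\wo^\wo\times H$, hence is itself Polish, hence is an absolute $\Gd$; since $\wo^\wo$ embeds as a $\Gd$ subset of $2^\wo$, this $X$ is already a $\borm2$ subset of $2^\wo\times H$ in the (absolute) sense the paper uses throughout. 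The paper makes no further move at this point.

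Consequently your last two paragraphs, which you present as ``the genuinely new step'' and ``the delicate core of the argument,'' are both unnecessary and, by your own admission, not carried out --- so as written the proposal places its main gap exactly where you locate the main work. The plan of encoding $\Sigma_{\rm II}$ vertically into the sections $H(a)$, $a\in Q$, would moreover fight against the mechanism of Claim \ref{lemxi}: those sections are countably many, zero-dimensional, and must remain negligible for the equivalence $\dim(X(\a))=0\iff\dim(\pi(X(\a)))=0$ to certify the dimension jump, whereas your scheme asks them to carry the universal quantifier over Player II's strategies. The genuine need to tame the factor $\Sigma_{\rm II}$ arises only later, for Theorem \ref{I^5/Q^5}, and there the paper does it not by absorbing the strategies into $H$ but by restricting Player II's moves to $\{0,1\}$ (Lemma \ref{Univ*}) so that $\Sigma_{\rm II}^*$ becomes compact while the product structure is kept.
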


\begin{proof}
 We check that $H$ and its graph part $G$ satisfy all the assumptions of
 Theorem \ref{F0=GSxi-complete}  for $\xi=2$: By Proposition \ref{H=1} $\dim(H)=1$ and it is easily seen as in Remark \ref{rem} c) that if $W$ is a nonempty open subset of $H$ and if a $\s$-compact set $T= \bigcup_n F_n$ (with $T_n$ compact) contains $ G\cap W$ then some $F_n$ contains the whole interval $\adh G(x)$ for some $x\in Q\cap W$ hence $T$ meets $\R^2\mns H$. Thus $G\cap W$ is not relatively $\s$-compact in $H$.  
 Finally observe that if $D\subset G$ then $\pi(D)\cap \pi(H \mns D)=\0$ hence if 
 $D$ is  dense in $G$ then $\pi(D)$ is   dense in  $\pi(H)$  hence  $\pi(H \mns D)$ is of empty interior in $\pi(H)$, so $\dim(\pi(H \mns D))=0$ and   by Theorem \ref{E=0}   $\dim ( H\mns D)=0$. 
 \end{proof} 
 

\begin{ssection}{\bf Back to games:}
{\rm  
For the next result we need to consider games in which one of the Players, namely Player II, is required to make his moves in $\{0,1\}$. Thus if   $\g=(n_0, n_1,n_2 \cdots)\in \wo^\wo$ is any infinite run in such a  game then
 $n_{2k+1}\in\{0,1\}$. The main advantage of this variation is that in this case  the space $\Sigma_{\rm II}^*$ of all strategies for  Player II  is  now compact: $\Sigma_{\rm II}^*=\{0,1\}^{(\wo^{<\wo}\setminus \{\0\})}\approx 2^\wo$ , 
while for Player I the situation is unchanged:   $\Sigma_{\rm I}^*=\wo^{(2^{<\wo})}\approx \wo^\wo$. 
The basic observation is that any $\bora 2$ game on $\wo$  can be reduced (in a precise sense) to a $\bora 2$  game of the previous form. To state this properly  let us introduce a notation. Let $\star$   denote the operation on $\wo^\wo$ defined by:
$$\a\star \b(n)=\left\{ 
\begin{array}{ll}
\a(k) &\text{ if } n=2k\\
\b(k) &\text{ if } n=2k+1\end{array}
\right.$$
Then the mapping $(\a,\b)\mapsto \a\star \b$ is clearly a homeomorphism from $\wo^\wo\times\wo^\wo$ onto $\wo^\wo$.
Notice that if we view $\a$ and $\b$ as two (trivial) strategies  for Player I and Player II respectively  then $\a\star \b$ is just the infinite run obtained by the $\star$ operation introduced in \ref{GG}.a.  

Set:
\begin{align*}
\wo^\wo\star2^\wo&= \{\a\star \b:  \  (\a, \b)\in \wo^\wo\times 2^\wo\}\\
&= \{\g\in \wo^\wo: \ \forall k, \ \g(2k+1)\in \{0,1\} \,  \}
\end{align*}


We need  the following variation of Lemma \ref{Univ}. 
It is worth noting that the situation is not symmetrical and one cannot replace in this statement    $\bora 2$  by 
$\borm 2$, equivalently Player I by Player II,  while   any Borel game  of rank $\xi \geq 3$ on $\wo$ can be reduced to a game   where both Players' moves are  in $\{0,1\}$.

}\end{ssection}

 \begin{lemma} \label{Univ*} There exists a $\bora 2$ set 
 $V\subset 2^\wo\times(\wo^\wo\star2^\wo)$   such that the set $\Game V$ is $\Game\bora 2$-complete. \end{lemma}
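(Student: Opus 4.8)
**Lemma \ref{Univ*}** asks for a $\bora 2$ universal set $V \subset 2^\wo\times(\wo^\wo\star2^\wo)$ such that $\Game V$ is $\Game\bora 2$-complete, where now Player II is constrained to play in $\{0,1\}$. Let me think about how to prove this.

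The starting point is Lemma \ref{Univ}, which already gives a $\bora 2$ set $U\subset 2^\wo\times\wo^\wo$ with $\Game U$ being $\Game\bora 2$-complete. So I have a good game where Player II plays arbitrary integers. I need to convert this into a game where Player II plays only bits. The key remark in the excerpt (the "Back to games" section) is that *any* $\bora 2$ game on $\wo$ can be reduced to a $\bora 2$ game where Player II plays in $\{0,1\}$—this is the crux.

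The standard trick: replace each integer-move of Player II by a finite block of bits that *codes* that integer. Specifically, a single move $n_{2k+1}\in\wo$ gets encoded in unary (or via some fixed coding $\wo\to 2^{<\omega}$), say as $n$ ones followed by a zero, read off over the next several rounds. Player I's interleaved moves during these coding rounds are simply ignored (or absorbed). The delicacy is that the win condition for the reduced game must stay $\bora 2$: since the decoding map sending a run $\gamma\in\wo^\wo\star2^\wo$ to the "decoded" run $\hat\gamma\in\wo^\wo$ is continuous where defined, and the set of runs where Player II actually produces a valid code (each integer-block terminates) is, I expect, $\Gd$, pulling back $U$ along this continuous map and intersecting keeps us in $\bora 2$. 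The asymmetry flagged in the text—that one *cannot* swap $\bora 2$ for $\borm 2$, i.e. constrain Player I to bits—arises because if Player II fails to complete a code (plays ones forever), we must *decide the run in Player I's favor*; this "default to I wins" on the $\Fs$-set of incomplete codes is exactly what preserves $\bora 2$ for Player I but would break it for the dual.

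The plan is as follows. First I would fix the coding scheme: a surjection decoding partial bit-sequences of II into integer moves, with the convention that an unfinished block defaults to a win for Player I. I would define the candidate game $V$ on $2^\wo\times(\wo^\wo\star2^\wo)$ by $V = \{(\a,\g) : \gamma \text{ decodes to a valid II-run and } (\a,\hat\gamma)\in U\} \cup \{(\a,\g):\gamma \text{ has an incomplete II-block}\}$. Then I would verify two things: (1) the complexity of $V$ is $\bora 2$—the second clause is $\Fs$ (open-ish, a countable union of clopen "II eventually only plays 1" conditions is $\Fs$), and the first clause is the preimage of the $\bora 2$ set $U$ under a continuous decoding map restricted to a $\Gd$ set, so the union stays $\bora 2$; (2) Player I wins $\bG_{V(\a)}$ iff Player I wins $\bG_{U(\a)}$, via translating strategies back and forth—a winning I-strategy in the integer game induces one in the bit game (play to force the decoded run into $U$, and if II ever stalls, I wins by default), and conversely, since II can always choose to code honestly, a winning bit-strategy for I forces a win in the original game. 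This gives $\Game V = \Game U$ (as subsets of $2^\wo$), which is $\Game\bora 2$-complete by Lemma \ref{Univ}.

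The main obstacle I expect is the careful bookkeeping of the \emph{strategy translation} together with the \emph{default convention}, to make both the complexity bound and the equivalence $\Game V=\Game U$ come out simultaneously. The complexity computation hinges on getting the default set ("II's code never terminates") on the correct side: it must be a win for I and must sit in $\bora 2$, which is why the set of "II plays $1$ forever from some point" has to be recognized as $\Fs$ and absorbed into the additive class—this is precisely the non-symmetry the remark warns about, so the argument cannot be run with the roles of the Players exchanged. Once the coding is fixed so that the decoding map is continuous on its $\Gd$ domain and the stalling set is $\Fs$, the rest is the routine verification that $\gamma\mapsto\hat\gamma$ intertwines the two games and hence the two $\Game$-operators.
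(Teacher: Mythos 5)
Your proposal is correct and follows essentially the same route as the paper: encode Player II's integer moves in unary over bit-blocks, default incomplete codes to a win for Player I, observe that the complement of $V$ is the intersection of the $\Gd$ set of valid codes with a continuous preimage of the complement of $U$ (so $V$ stays $\bora 2$), and translate strategies to get $\Game V=\Game U$. The paper's only additional bookkeeping is the explicit re-indexing of Player I's moves via $\a\circ\nu$, which is exactly your ``ignore I's interleaved moves'' convention made precise.
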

 
 \begin{proof} Given any $\e\in 2^{<\wo}\cup2^\wo$ let  
 $\nu=\Psi(\e) \in \wo^{<\wo}\cup \wo^\wo $ be  the  increasing enumeration of the set $N=\{j<\abs \e: \ \e(j)=1\}$    and let $\b=\Phi( \e)\in \wo^{\abs \nu}$ be  defined   by: $\b(0)=\nu(0)$  \text{ and } $ \b(k+1)=\nu({k+1})-\nu(k)-1.$ 
 Observe that  $\abs {\Psi(\e)}=\abs {\Phi(\e)} \leq \abs \e$.

The   mapping $\Phi$  thus defined is clearly   one-to-one on  the set $P\cup Q$ where  
$P=\{\e \in  2^\wo: \ \forall j, \ \exists k> j, \ \e(k)=1\} $ and $Q=\{\e \in  2^{<\wo}: \  \e(\abs {\e}-1)=1\}$ with   
$\Phi(P)=\wo^\wo$ and $\Phi(Q)= \wo^{<\wo}$; and  we shall denote by  $\Phi_*:2^{<\wo}\cup2^\wo\to  Q\cup P$ its inverse.  
Moreover the  mapping  $\Phi_{\vert P}:P\to  \wo^\wo$  is a homeomorphism.
\sk

Then to any   set $A\subset \wo^\wo$ we associate  a set 
$A^*\subset \wo^\wo\star2^\wo$ defined as follows:  an element   $\a\star \e\in\wo^\wo\star2^\wo$ belongs to $A^*$ if:

  -- either  $\abs {\Psi(\e)}=\abs {\Phi(\e)}<\infty$   
  
  -- or   $\abs {\Psi(\e)}=\abs {\Phi(\e)}=\infty$ and if $\nu=\Psi(\e)$ and 
  $\b=\Phi(\e)$   then $(\a \circ \nu)\star \b \in A$

\begin{claim} \label{A*}    Player I wins the game $\bG_A$ if and only if he wins the game $\bG_{A^*}$.
\end{claim}
\begin{proof}

-- If Player I has a winning strategy $\s$ in  $\bG_A$  he constructs   a  strategy $\s^*$  in
  $\bG_{A^*}$ by playing as follows:  If in an infinite run in  $\bG_{A^*}$   in which Player II plays some $\e\in 2^\wo$, then following $\s^*$ Player I plays $\a\in \wo^\wo$ defined inductively by:
 $\a(0)=\s(\0)$  and for $k>0$:
$$\a(k)=\left\{ 
\begin{array}{ll} \s(\Phi(\e_{\vert k})) & \text{if }  \e(k-1)=1  \\
0 & \text{if not}
\end{array}
\right.
$$
If  $\e\not\in P$ then by   definition of $A^*$ Player I wins the run. 
Otherwise $\e\in P$ and if $\nu=\Psi(\e)$ and $\b=\Phi(\e)$ then from the definition of $\s^*$ we have
$\a \circ \nu=\s( \b)$ and since $\s$ is winning in $\bG_A$  then $(\a \circ \nu)\star \b \in A$  so by definition of $A^*$,  $ \a\star \e \in A^*$, hence   $\s^*$ is winning in $\bG_{A^*}$.

 -- Conversely if Player I has a winning strategy $\s^*$ in  $\bG_{A^*}$    he constructs a   strategy  $\s$ in   
 $\bG_A$  by   playing as follows:   In an infinite run in   $\bG_A$ in which Player II plays some $\b\in \wo^\wo$ then following $\s$  Player I plays $\a\in \wo^\wo$ defined  for all    $k$ by: $\a(k)= \s^*(\Phi_*(\b_{\vert j_k}))$ where $(j_k)=\Psi (\Phi_*(\b))$ is the enumeration of the set $\{j: \e(j)=1\}$ for $\e=\Phi_*(\b)$. Notice that 
 $\b_{\vert j_k}\in Q$ hence $\s$ is well defined. Moreover if  $\a'= \s^*(\e)$ then 
$\a=\a'\circ \nu$ and since $\s^*$ is winning  in  $\bG_{A^*}$ then by definition of $A^*$ we have $ \a\star \b \in A$, hence   $\s$ is winning in $\bG_{A}$.
 \end{proof}
 
 \sk
 
Let  $U\subset 2^\wo\times \wo^\wo$  and 
  $V\subset 2^\wo\times (\wo^\wo\star2^\wo)$ defined by  $V(\d)=(U(\d))^*$ for all $\d\in  2^\wo$.

 \begin{claim} \label{UV} 
 If $U$ is  a  $\bora 2$ set then $V$ is  a   $\bora 2$  set  too.
\end{claim}
\begin{proof} Observe that for any set $A\subset \wo^\wo\star2^\wo$ and any $\gamma=(\a\star\e)\in\wo^\wo\star2^\wo$ we have
$$\g\not \in A^*\iff \e\in P \ \wedge \ \a\star \Phi(\e)\not\in A$$
hence for any $\d\in  2^\wo$:
$$(\d,\g)\not \in V\iff \e\in P \ \wedge \ (\d,\a\star \Phi(\e))\not\in U$$
So letting   $\pi: 2^\wo\times \wo^\wo\to \wo^\wo$ and $\rho:2^\wo\times P \to 2^\wo\times \wo^\wo$ denote  the mappings defined 
by: $$\pi (\d,\g)=\e \quad \text{ and } \quad \rho(\d,\g)=(\d,\a\star \Phi(\e))$$  
we have  
 $$2^\wo\times (\wo^\wo\star2^\wo)\setminus V= \pi^{-1}(P) \cap \rho^{-1}( 2^\wo\times\wo^\wo\setminus U) $$
and since  the mappings $\pi$ and $\rho$ are continuous and $P$ is a $\Gd$ set it follows that $2^\wo\times (\wo^\wo\star2^\wo)\setminus V$ is
a $\Gd$ subset, hence $V$ is a $\bora 2$ subset, of    $2^\wo\times (\wo^\wo\star2^\wo)$.
 \end{proof}
  
 To  finish  the proof of Lemma \ref{Univ*}  let  $U\subset 2^\wo\times \wo^\wo$ be a $\bora 2$-complete  set   given by Lemma \ref{Univ}  and consider the set $V\subset 2^\wo\times (\wo^\wo\star2^\wo)$  as above. By 
 Claim \ref{UV} $V$ is a $\bora 2$ set hence $\Game V$ is $\Game\bora 2$ set and we now check that the set $\Game V$ is $\Game\bora 2$-hard.
 
 So let $A\subset  2^\wo$  be any  $\Game \bora 2$ set. By 
Lemma \ref{Univ*} there exists a continuous mapping $\ph: 2^\wo\to 2^\wo$ which reduces the set $A$ to $\Game U$, hence by Claim \ref{A*} we have:

\begin{align*}
\e\in A&\iff \ph(\e)\in \Game U\iff \text{Player I wins } {\bG}_{(\Game U)(\ph(\e))}\\
&\iff \text{Player I wins } {\bG}_{\bigl((\Game U)(\ph(\e))\bigr)^*}={\bG}_{V(\ph(\e))}
\iff \ph(\e)\in \Game V
\end{align*}
which proves that $\ph$   reduces the set $A$ to $\Game V.$ 
  \end{proof}

We recall that it is not known  whether  the set  $\FF_0(\I^2\setminus \Q^2)$ is $\ca$.

\begin{theorem} \label{I^5/Q^5} 
$\FF_0(\I^5\setminus \Q^5)$ is     \hbox{$\Game \bora 2$-hard}. 
\end{theorem}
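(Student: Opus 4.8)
The plan is to derive this from Theorem \ref{F0=GSxi=2-complete} by exhibiting a closed copy of the hard space inside $\I^5\setminus\Q^5$. By Remark \ref{rem} c) I fix a $\Gd$ hypergraph $H$ that is relatively closed in $\I^2\setminus\Q^2$, so that $\adh G=\overline H^{\I^2}$ and $\overline H\setminus H\subseteq\Q^2$ is countable. By Theorem \ref{F0=GSxi=2-complete} applied to $H$ there is a $\borm2$ set $Y\subset 2^\wo\times H$ for which $\FF_0(Y)$ is $\Game\bora2$-hard; this is the space I want to transplant into $\I^5\setminus\Q^5$.

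Granting such an embedding, the hardness transfers cleanly. If $Y$ is realized as a closed subset of $Z:=\I^5\setminus\Q^5$, then, taking as compactification of $Y$ the closure $\hat Y=\overline Y^{\hat Z}$ inside a compactification $\hat Z$ of $Z$, the inclusion $\iota:\FF(Y)\to\FF(Z)$, $F\mapsto F$, is a continuous embedding for the associated admissible topologies: a closed subset of $Y$ is exactly a closed subset of $Z$ contained in $Y$, and its closures in $\hat Y$ and in $\hat Z$ coincide. Since dimension is intrinsic, $F\in\FF_0(Y)\iff F\in\FF_0(Z)$ for $F\in\FF(Y)$, so $\FF_0(Y)=\iota^{-1}(\FF_0(Z))$ and $\iota$ reduces $\FF_0(Y)$ to $\FF_0(Z)$. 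As $\FF_0(Y)$ is $\Game\bora2$-hard, the transfer principle recalled in Section \ref{DST} yields that $\FF_0(\I^5\setminus\Q^5)$ is $\Game\bora2$-hard.

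So the entire content is the embedding, and here is the shape I would give it. The governing observation is that the closed subsets of $\I^5\setminus\Q^5$ are exactly the sets $K\setminus C$ with $K\subset\I^5$ compact and $C=K\cap\Q^5$ countable; hence I must embed $Y$ in $\I^5$ so that it misses $\Q^5$ and so that every limit point of the image lying outside the image is all-rational. I would devote two coordinates to $H$ itself through $H\subset\I^2\setminus\Q^2$; since no point of $H$ has both coordinates rational, the image \emph{automatically} avoids $\Q^5$, and only the remainder has to be controlled. The remaining three coordinates must simultaneously encode the zero-dimensional Cantor factor $2^\wo$ and witness the $\Gd$ definition of $Y$ inside $2^\wo\times H$, so as to make $Y$ closed. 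The naive choices fail: realizing $2^\wo$ as a fixed Cantor set of irrationals freezes one coordinate away from $\Q$, and adjoining an independent "distance to the $\Fs$ complement of $Y$" coordinate valued in the irrationals produces limit points whose Cantor- or $H$-coordinates are irrational, hence not in $\Q^5$, so the image is not relatively closed.

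The device I would use, and the main obstacle, is to \emph{couple} all three auxiliary coordinates to the approach to the rational holes of $H$. Along any sequence in $Y$ that leaves $Y$ inside $2^\wo\times\overline H$, I want to force the $H$-coordinates to a hole of $\overline H\setminus H\subset\Q^2$ and, at the same instant, drive the three auxiliary coordinates to rational values; this is arranged by modulating the Cantor-encoding of $\a\in2^\wo$ and the escape coordinate by factors that vanish as one approaches the holes and the $\Fs$ complement, adding rational offsets there, while keeping the map injective and open onto its image off the remainder. The latitude in the choice of the countable hole system of $H$ (Remark \ref{rem} c)) together with the extra coordinates are precisely what provide room for this coupling. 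Verifying that the resulting map is a homeomorphism onto its image, that the image misses $\Q^5$, and that every limit point outside the image is all-rational is the technical heart of the argument; once it is in place, the theorem follows as in the second paragraph.
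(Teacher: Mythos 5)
Your overall reduction is the right one and matches the paper: it suffices to produce a space $Y$ with $\FF_0(Y)$ $\Game\bora 2$-hard that embeds as a closed subset of $\I^5\setminus\Q^5$, and your transfer-of-hardness paragraph is correct. The gap is that the embedding, which you yourself identify as the entire content, is never constructed, and the space you propose to embed is the wrong one. The space given by Theorem \ref{F0=GSxi=2-complete} lives in $2^\wo\times\Sigma_{\rm II}\times H$ with the \emph{non-compact} strategy space $\Sigma_{\rm II}\approx\wo^\wo$ and with $\Gd$ (not closed) copies $G_n\approx\wo^\wo$ inside $H$. A closed subset of $\I^5\setminus\Q^5$ is $K\setminus\Q^5$ for a compactum $K$, so the space to be embedded must admit a compactification whose remainder fits inside the countable set $\Q^5$. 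But the ``vacuous'' part $2^\wo\times\Sigma_{\rm II}\times(H\setminus\bigcup_nG_n)$ of this $Y$ is dense in $2^\wo\times\Sigma_{\rm II}\times H$, so the closure of $Y$ in $2^\wo\times\hat\Sigma_{\rm II}\times\adh H$ is the whole compact product and the remainder contains $2^\wo\times(\hat\Sigma_{\rm II}\setminus\Sigma_{\rm II})\times\adh H$: an uncountable set with one-dimensional slices. No ``coupling of coordinates'' can compress that into countably many rational points while keeping a homeomorphism onto the image, and your sketch offers no mechanism for doing so.

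The paper spends most of its effort precisely on removing these obstructions before any embedding is attempted. Lemma \ref{Univ*} produces a $\bora 2$ game, still $\Game\bora 2$-universal, in which Player II moves in $\{0,1\}$, so that the strategy space $\Sigma_{\rm II}^*\approx 2^\wo$ becomes compact; Lemmas \ref{Fn} and \ref{Fnn} build $H$ and \emph{closed} copies $F_n\approx\wo^\wo$ of Baire space simultaneously, arranged so that $\adh {F_n}\approx 2^\wo$, $\adh {F_n}\cap H=F_n$ and $\adh H\setminus H$ is countable. The rebuilt $Y$ is then $\Gd$ in the compact space $K=2^\wo\times 2^\wo\times\adh H$ with $K\setminus Y$ $\sigma$-compact and zero-dimensional. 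The decisive idea, absent from your proposal, is that the remainder need not be made countable by the embedding itself: one writes $K\setminus Y$ as a disjoint union of compacta of vanishing diameter and collapses each to a point, obtaining a compactification $L$ of $Y$ with countable remainder and $\dim L\leq 2$; $L$ then embeds in $\R^5$ by the classical embedding theorem for two-dimensional compacta, and the countable dense homogeneity of $\R^5$ moves the remainder, together with the rest of $\Q^5$, exactly onto $\Q^5$. Without Lemma \ref{Univ*}, the $F_n$'s, and this collapsing-plus-CDH step, your argument does not go through.
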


\begin{proof}  We shall actually construct a space $Y$ for which the set $\FF_0(Y)$ is 
$\Game \bora  2$-hard and which can be embedded as a closed subset in $\I^5\setminus \Q^5$. It will follow  that $\FF_0(Y)$ is a closed subset of  $\FF_0(\I^5\setminus \Q^5)$ hence $\FF_0(\I^5\setminus \Q^5)$ is $\Game \bora  2$-hard too.

The definition of the space $Y$    follows    the same general scheme of definition as for the spaces  $X_U$  constructed in the proof of Theorem \ref{F0=GSxi-complete} and will be of the form:
$$Y=\{(\a,\tau, z)\in 2^\wo\times\Sigma_{\rm II}^*\times H: \  
\forall n, \ z\in F_n \rightarrow  (\a, h_n(z)\star\tau)  \not \in V\}$$ 
where   $V$ is the  set given by Lemma \ref{Univ*},  $H$ is a  specific hypergraph that we will construct, 
the set $\Sigma_{\rm II}^*\approx \wo^\wo$  of strategies replaces the previous set $\Sigma_{\rm II}$, 
and  for all $n$, $h_n:F_n\to \Sigma_{\rm I}^*$  is a homeomorphism \red{defined on  a {\it closed} subset $F_n$  of $H$ which will replace the $\borm 2$ set $G_n$ of   the proof of Theorem \ref{F0=GSxi-complete}.} 
  
 Since the $\bora 2$ set $V$ satisfies also Lemma \ref{Univ}  we can apply  Claim \ref{lemxi} to conclude as in Theorem \ref{F0=GSxi-complete} that  $\FF_0(Y)$ is $\Game \bora  2$-hard. Moreover since the $F_n$'s are closed subsets of $H$ the immediate analysis of the previous definition shows that $X$ is indeed a $\borm 2$ set.

Our plan is to show that for a suitable choice of $H$ and the $F_n$'s the space $Y$ above can  be embedded  as a closed subset  in $\I^5\setminus \Q^5$. We point out  that unlike in the proof of Theorem \ref{F0=GSxi-complete} where the $\red G_n$'s were constructed for a given space $H$ in the present case we will have to construct $H$ and the $F_n's$ simultaneously, which  {will} be done in a series of steps.

\medskip

We recall that   the notation $\adh A$ for a subset of $H$ refers to the closure in $\R^2$. 

\begin{lemma} \label{Fn} In any hypergraph $H$ with  graph part $G$
there exists a closed subset   $F$ of $G$  satisfying:
 $F\approx \wo^\wo$, $\adh F \approx  2^\wo$, and     for all $q\in Q$ , card $\bigl (\adh F(q)\bigr)\leq 1$.
\end{lemma}

\begin{proof}
Observe first that given any $q\in Q$ then by  Definition \ref{defhyper}  we can write $f=g+h$ with
 $h:P\cup\{q\} \to \R $ continuous   and 
 $g:I\setminus\{q\} \to \R $    continuous and such that $\Omega g(q)=\adh G(q)$ is a non trivial interval.
Then for any cluster value $\ell \in \Omega g(q)$ and  any neighbourhood  $J$ of $q$  one can find an infinite sequence  $(J_n)_n$ of pairwise disjoint closed non trivial intervals in $J\setminus\{q\}$ with endpoints in $P$ and of arbitrary small diameter such that  $\lim_n d(q, {I_n})=0$ and the function $g_{\vert  \bigcup_n I_n}$ has limit $\ell$ at $q$; and since $h$ is continuous on $P\cup\{q\}$ then $f_{\vert P\cap \bigcup_n I_n}$ has a limit (namely $\ell+h(q)$) at $q$.

Fix an enumeration $(q_n)_{n\in\wo}$ of $Q$; we shall   construct a family 
$(r_s,J_s)_{s\in \wo^{<\wo}}$    satisfying for all $s$:

\begin{listeroman}
 \item  $J_s$  a closed non trivial  interval  with endpoints in $P$, 
  \item $\diam (J_s)\leq 2^{-\abs s}$,
  \item  $J_{s\cat n}\subset J_s$ and $J_{s\cat m}\cap J_{s\cat n}=\0$ if $m\not=n$,
\item  $r_s\in J_s\cap Q$,
\item  if $\abs s= k$ and $q_k\in J_s$ then $r_s=q_k$, 
  \item  $r_s\not\in J_{s\cat n}$ for all $n$ and $\lim_n d(r_s, J_{s\cat n})=0$,
  \item  $f_{\vert P\cap \bigcup_nJ_{s\cat n}}$  has a limit at $r_s$.
\end{listeroman}
  
 Take for $J_\0$ any  open interval   with endpoints in $P$. Suppose that the family 
$( J_s)_{s\in \wo^k}$ is constructed we shall define for each ${s\in \wo^k}$, the element $r_s$ and the family
 $(J_{s\cat n})_{n\in \wo}$. 

First if $q_k\in J_s$ then we define $r_s=q_k$; if not we choose  $r_s$ arbitrarily in  $J_s\cap Q$ so  
conditions {\it (iv)} and {\it (v)} are satisfied. Once $q=r_s\in J_s$ is chosen, observe that since $J_s$ has endpoints in $P$ then $q$ is necessarily in the interior of $J_s$, and  following the previous observations, we can find a family $(J_{s\cat n})_{n\in \wo}$ satisfying the remaining conditions.  Let
 $$F^*= \bigcap_k\bigcup_{s\in \wo^k}  {J_s} \quad \text{and} \quad K= \adh {F^*}$$

\nd 

\begin{claim}: For any $z\in K$ there exists a unique maximal sequence  $\s=(s_j)$ in  $\wo^{<\wo}$ such that:  $z\in  J_{s_j} $,  $\abs {s_j}=j$   and $s_j\subset s_{j+1}$.
 Then:

 -- if $\s$ is finite with last element $s$ then $z\in Q$  and $z=r_s$
 
 -- if $\s$ is infinite  then  $z\in P$ and $\{z\}=\bigcap_j  J_{s_j}$
\end{claim}

\begin{proof} The construction  is by induction on $j$.
 For $j=0$ then necessarily $s_0=\0$   which  is legal since $z\in  \adh {F^*}\subset J_\0$.
 Suppose that $s_j$ is already defined then again since 
 $z\in  K\subset \{r_{s_j}\}\cup \bigcup_n J_{s_j\cat n}$ then:
 
 --  either $z=r_{s_j}$  and then $\abs {s_j}=j$ and $z=q_j$  and in this case 
 $\s= (s_i)_{i\leq j}$ is maximal since $z\not\in J_s$ for any $s\supset s_j$ with $\abs {s}=j+1$
 
 -- or   $z\in J_{s_j\cat n}$ for some unique $n$ and so necessarily $s_{j+1}=s_j\cat n$. 
 
\nd This finishes the construction of $\s$ and:

--  either $\s$ is finite and  $z=r_s$ where $s$ is  last element  of $\s$

--  or $\s$ is infinite and $\{z\}=\bigcap_j  J_{s_j}$ and we have to show that  $z\in P$. If not
then we would have $z=q_k$ for some $k$  hence $q_k\in J_{s_k}$  with $r_{s_k}\not = q_k$ 
which is in contradiction with condition {\it (v)}. 
\end{proof}

This defines a mapping  $h: K\to \wo^{<\wo}\cup \wo^\wo \approx \mathcal P(\wo)\approx 2^\wo$
which is clearly a homeomorphism  such that 
$F^*= h^{-1}(\wo^{\wo})=K\cap P\approx \wo^{\wo}$ 
    and    $K\setminus F^*= h^{-1}(\wo^{<\wo})= K\cap Q=\{r_s: \ s\in\wo^{<\wo}\}$;
hence $F^*$ is a closed subset of $P$. Finally    set $F=\pi^{-1}(F^*)$:   since $\pi$ is a homeomorphism from $G$ onto $P$ then  $\wo^\wo \approx F^*\approx F$ and $F$ is a closed subset of $G$; and since  by condition {\it (vii)} $f_{\vert F^* }$  admits a continuous extension   $g: K\to \R$   then  $\adh F=\Gr(g)$ hence for all 
 $q\in Q$, $\adh F (q)$ is either  $\{g(q)\}$ if $q=r_s\in K$, or else empty.
 \end{proof}

\begin{lemma} \label{Fnn} There exists an hypergraph $H$ with  graph part $G$  and a sequence $(F_n)$ of  subsets of $H$ satisfying: 

 a) $\adh H\setminus H$ is countable,

b) any nonempty open subset of $H$ contains some $F_n$.

 c) for all $n$, 
  $F_n\approx \wo^\wo$ ,  $\adh F_n \approx  2^\wo$ and $F_n=\adh F_n \cap H\subset G$ 

\end{lemma}

\begin{proof} 
Start from an   hypergraph $H'$ with  graph part $G$  which is co-countable in $\adh G$ (see Remark \ref{rem}.c)  and let  $(W_n)_n$ be a basis of the topology of $H'$ with  for all $n$, $W_n=H'\cap (I_n\times J_n)\not=\0$ for some intervals $I_n, J_n$. Then applying Lemma \ref{Fn}  to  each hypergraph $H_n=H'\cap W_n$ with graph part $G_n=G\cap W_n$, we can find  a  closed subset  $F_n$  of $ G_n\cap W_n$ such that for all $q\in Q$, $\adh F_n (q)$ is   contained in some singleton   $\{q^{(n)}\}$. 
Let $$H=H'\setminus\{(q,q^{(n)}): \ q\in Q \text{ and } n\in \wo\}$$ 
then $\adh G\setminus H=(\adh G\setminus H')\cup(H'\setminus H)  $ is countable hence  $\adh H=\adh G$ and
$\adh H\setminus H$  is countable. In particular for all $q\in Q$,   $\adh G(q)\setminus H(q)$ is countable, so  
$H(q)$ is dense  in $\adh G(q)$, and since $H(q)\subset H'(q)$  then $H(q)$ is   of empty interior in   $\adh G(q)$. Hence $H$ is   an hypergraph   with same graph part $G$. 
Finally  for all $n\in \wo$ and all $q\in Q$, $\bigl(\{q\}\times\adh F_n (q)\bigr)\cap H=\0$,  hence 
$\adh F_n  \cap H\subset G$ and since $F_n$ is a closed subset of $G$ then 
$F_n=\adh F_n  \cap  G=\adh F_n\cap H$.    
\end{proof}

\begin{lemma} \label{Fnnn}
Let   $H$ and $(F_n)$ be as in  Lemma \ref{Fnn},  let $V$ be an arbitrary $\bora 2$ subset  of $\wo^\wo$ and    for all $n$,   let $\Phi_n:2^\wo\times2^\wo\times F_n \to   \wo^\wo$ be a    continuous mapping. Then the space:
$$X=\{(\a,\b, z)\in 2^\wo\times2^\wo\times H: \  
\forall n, \ z\in F_n \rightarrow \Phi_n(\a,\b,z)  \not \in V\}$$ 
 can  be embedded  as a closed subset  in $\I^5\setminus \Q^5$.
\end{lemma}

 \begin{proof} Since $V$ is a $\bora 2$ set and the $F_n$'s are closed subsets of $H$  then $X$ is clearly a $\borm 2$ subset of the   compact space $K= 2^\wo\times 2^\wo\times \adh H$.  
Observe that  if $(\a,\b, z)\in K\setminus X$ then either $z\in D=\adh H \setminus H$ which is countable, or
$z\in F_n\subset \adh F_n \approx 2^\wo $ for some $n$. Hence
$$K\setminus X\subset \bigcup_{z\in D,\, n\in \wo} 2^\wo\times 2^\wo\times (\{z\}\cup  \adh F_n)$$
Since  for all $z\in D$ and all $n$, the set 
$2^\wo\times 2^\wo\times (\{z\}\cup   \adh F_n)$  is a zero-dimensional compact  space  then
 by     Theorem       \ref{dim} $\dim(K\setminus X)=0$, and  we can write  the $\s$-compact space $K\setminus X$ as the union of a countable family $(K_j)_j$ of pairwise disjoint compact sets: 
 $K\setminus X =\bigcup_jK_j$ with  $\lim_j \text{diam} (K_j) =0$.                                                                                                                                                                                               Hence  if $\Delta$ denotes the diagonal of $K$ then $E=\Delta\cup \bigcup_j K_j\times K_j$ is a closed   equivalence relation on $K$. Let $L$ be the quotient (compact)  space $K/E$ and   $\rho:K\to L$  be  the canonical quotient mapping. Since  the restriction of $\rho$ to $X$ is a one-to-one perfect mapping on its image $Y=\rho(X)$ then it is a  homeomorphism from $X$ onto   $Y$; moreover  the  set $L\setminus Y$   is countable as the collection of the $E$-equivalence classes $K_j$. 

Note that by  Proposition \ref{H=1},  $\dim (Y)=\dim (X) \leq \dim (K)=\dim ( \adh G)=1$, and since 
$L\setminus Y$  is   countable          then by Theorem \ref{dim},  $\dim (L) \leq 2$, hence the compact space $L$ can be embedded in $\R^5$.

Fix now a homeomorphism $h_0:L \to L_0\subset \R^5$; then   $D_0=(\Q^5 \setminus L_0)\cup 
h_0 (L\setminus Y)$ is   a countable dense subset of $\R^5$, and since $\R^5$ is a CDH space (\cite{bnt}) then there exists a 
 homeomorphism $h_1:\R^5 \to \R^5$  such that $h_1(D_0)=\Q^5$ and we can 
 assume that the compact space 
$L_1=h_1(L_0)$ is a subset  of $\I^5$. Hence 
 $h=h_1\circ h_0\circ \rho$ induces a  homeomorphism from $X$ onto $L_1 \setminus \Q^5=L_1\cap(\I^5 \setminus \Q^5)$   {which is} a closed subset of  $\I^5 \setminus \Q^5 $. 
 \end{proof}

  {
 \nd{\it \underbar{End of the proof of Theorem \ref{I^5/Q^5}}:}
Starting from $H$ and $(F_n)$  as in  Lemma \ref{Fnn} we fix for all $n$, an arbitrary homeomorphism $h_n:F_n\to \Sigma_{\rm I}^*\approx \wo^\wo$. It is clear then that the space $Y$ defined at the beginning of the proof is of the form considered in Lemma \ref{Fnnn}. Hence $Y$ can be embedded  as a closed subset  in
 $\I^5\setminus \Q^5$ and since $\FF_0(Y)$ is  $\Game\bora 2$-hard then   $\FF_0({\I^5\setminus\Q^5})$ is $\Game\bora 2$-hard too. 
}
   \end{proof}

 \end{section}


 \bibliographystyle{amsplain}

   \end{document}